\documentclass[10pt]{amsart}
\usepackage{amssymb}
\usepackage{amssymb,amsthm,amsmath,enumitem}
\usepackage[numbers,sort&compress]{natbib}
\usepackage{color}
\usepackage{graphicx}
\usepackage{tikz}
\usepackage{bbm}
\usepackage{mathrsfs}
\usepackage{amsfonts}
\usepackage{bm}
\usepackage{appendix}
\usepackage{multirow}   

\usepackage[margin=2.5cm]{geometry}
\newtheorem{thm}{Theorem}[section]

\newtheorem{lem}[thm]{Lemma}

\newtheorem{defn}[thm]{Definition}

\numberwithin{equation}{section}

\title[almost periodic solutions]{On the construction of almost periodic solutions for the derivative nonlinear Schr\"odinger equation }

\author[Y. Wu]{Yuchen Wu}
\address[Yuchen Wu]
{School of Mathematical Sciences,
	Fudan University,
	Shanghai 200433, China} \email{22110180044@fudan.edu.cn}

\author[X. Yuan]{Xiaoping Yuan}
\address[Xiaoping Yuan]
{School of Mathematical Sciences,
	Fudan University,
	Shanghai 200433, China} \email{xpyuan@fudan.edu.cn}

\keywords{almost periodic solution, KAM theory, derivative nonlinear Schr\"odinger equation.}

\begin{document}

	\begin{abstract}
	In this paper, we consider a derivative nonlinear Schr\"odinger equation 
	$$
	\mathrm{i}\partial_{t}u+\partial_{xx}u-V\ast u+\mathrm{i}\vert u\vert^{2}\partial_{x}u=0
	$$
	on the torus $\mathbb{T}$, depending on some potential $V$. We prove that for `almost all' potentials $V$, this equation admits an almost-periodic solution.
	\end{abstract}
	\maketitle
	
	\section{Introduction}
	For nonlinear evolution equations viewed as infinite dimensional dynamical systems, it is both fundamental and significant to understand their dynamical properties, especially over long times. A natural primary goal is therefore to seek solutions with recurrent properties, of which time‑periodic solutions seem to be the simplest one can expect. The classical roots of this problem can be traced back to the study of the N‑body problem by Poincaré and Birkhoff, who established the Poincaré–Birkhoff twist theorem, providing deep geometric and topological insights into the existence of periodic orbits in conservative systems. 
	
	The study of the existence of periodic solutions for PDEs began relatively late. The investigation via variational methods dates back to the pioneering work of Rabinowitz in the late 1960s, where he employed constrained minimization to obtain periodic solutions for nonlinear wave equations \cite{Rab67}. Since then, the application of variational methods to study periodic solutions of PDEs has developed into a vast and highly active field (see e.g. \cite{BCN80,Str08}). However, as stated in \cite{Rab67}, this variational method applies only when the period is a rational multiple of $2\pi$. In the irrational case, zero becomes an accumulation point of the spectrum, and we cannot obtain a bounded inverse of the wave operator. Thus we are led to the classical small divisor problem. As is well known, Kolmogorov-Arnold-Moser (KAM) theory provides a powerful tool for handling small divisors. The quadratic convergence inherent in Newton’s iteration is a mature and effective mechanism to overcome the loss of regularity frequently encountered in the context of PDEs, and it is commonly applied to establish well‑posedness and to study the long‑time dynamical behavior of solutions, see e.g. \cite{Hor76,Zeh76,Kla80,MV11,FMM24}.
	
	A more general notion than periodic solutions is that of quasi‑periodic and almost periodic solutions. We recall the definitions below.
	\begin{defn}
		A continuous function $q:\mathbb{R}\rightarrow\mathbb{R}$ is called quasi-periodic with frequencies $\omega=(\omega_{1},\cdots,\omega_{n})$, if there exists a continuous hull $Q:\mathbb{T}^{n}\rightarrow\mathbb{R}$ such that $q(t)=Q(\omega t)$ for all $t\in\mathbb{R}$. A continuous function is called almost-periodic if it is a uniform limit of quasi-periodic functions.
	\end{defn}
	From a perspective more oriented toward dynamical systems, finding quasi-periodic solutions amounts to constructing mappings
	$$Q:\mathbb{T}^{n}\rightarrow P,\ x\mapsto Q(x,\cdot)$$
	of the $n$-torus $\mathbb{T}^{n}$ into the phase space $P$ together with frequency vectors $\omega$ such that the straight windings $t\mapsto \omega t+x_{0}$ on the torus map into solutions of the equation we consider. Thus quasi-periodic solutions arise from what we call invariant tori and each orbit on a finite dimensional invariant torus corresponds to a quasi‑periodic solution, while an infinite dimensional one corresponds to an almost periodic solution. 
	
	For decades, KAM theory has attracted the interest of many mathematicians and become a powerful machine to establish the existence of quasi-periodic solutions (invariant tori) for nonlinear Hamiltonian PDEs.
	\subsection{Context and background}\label{116}
	Since the properties of dispersive equations vary widely, we begin with a rough classification before introducing the application of KAM theory to PDEs. Consider a Hamiltonian PDE
	\begin{equation}\label{121}
	\partial_{t} w=Aw+F(w),\ x\in\mathbb{T}^{d},t\in\mathbb{R},
	\end{equation}
	where $Aw$ is a linear Hamiltonian vector field with $p:=ord\ A>0$ and $F(w)$ is a nonlinear Hamiltonian vector field with $\tilde{p}:=ord\ F$. The order of an operator $T$, denoted by $ord\ T$, is defined as the smallest real number  such that $T$ can be extended to a bounded operator from $H^{s}$ to $H^{s-ord\ T}$ for all real $s$ (or for $s$ in a certain range), where $H^{s}$ is the standard Sobolev space on the torus. For instance, $ord\ -\Delta=2$ and $ord\ N=0$ for the Laplace operator $-\Delta$ and quintic nonlinear operator $N(w):=\vert w\vert^{4}w$, respectively.
	\begin{description}
		\item[Case 1 (bounded perturbation, $\tilde{p}\leq 0,p>0$)] Here are two typical examples.\\
		NLS($\tilde{p}=0,p=2$):$\mathrm{i}\partial_{t}u+\Delta u+V(x)u+\vert u\vert^{2}u+\cdots=0,x\in\mathbb{T}^{d}$.\\
		NLW($\tilde{p}=-1,p=1$):$\partial_{tt}u-\Delta u+V(x)u+u^{3}+\cdots=0,x\in\mathbb{T}^{d}$.\\
		There is a well-established and substantial body of the theory concerning such equations (see e.g. \cite{K, Kuk00, KP,P1,P2,W} for the earliest results for 1d PDEs and \cite{B1,B2,EK,EGK, BB13,BB20, Wang16, Wang19, Wang20,Yua21} for some results in higher dimension). 
		\item[Case 2 (unbounded perturbation, $p>0,0<\tilde{p}<p-1$)] Two of the most typical examples are the perturbed Korteweg-de Vries (pKdV) equation and the Kadomtsev–Petviashvili (KP) equation.\\
		pKdV($p=3,\tilde{p}=1$):$\partial_{t}u+\partial_{x}^{3}u+u\partial_{x}u+\cdots=0,x\in\mathbb{T}$.\\
		KP($p=3,\tilde{p}=1$):$\partial_{t}u+\partial_{x}^{3}u+u\partial_{x}u+\partial_{x}^{-1}\partial_{yy}u+\cdots=0,(x,y)\in\mathbb{T}^{2}$.\\
		For the PDE with unbounded Hamiltonian perturbation, the first KAM theorem is due to Kuksin \cite{K2} whose lemma is used to prove the persistence of the finite-gap solutions of the KdV equation, as well as its hierarchy, subject to periodic boundary conditions. See also Kappeler-Pöschel \cite{KP2}.
		\item[Case 3 (unbounded critical perturbation, $p>0,\tilde{p}=p-1$)] A typical example is the derivative nonlinear Schrödinger equation.\\
		DNLS($p=2,\tilde{p}=1$):$\mathrm{i}\partial_{t}u+\partial_{xx}u+V\ast u+\mathrm{i}\partial_{x}(\vert u\vert^{2}u)+\cdots=0,x\in\mathbb{T}$.\\
		In the case of unbounded critical perturbations, Kuksin’s lemma \cite{K2} is no longer applicable (see subsection \ref{401} for details). Liu-Yuan \cite{LY10,LY11} generalized Kuksin's lemma and quasi-periodic solutions can be obtained for a class of derivative nonlinear Schrödinger equations with Dirichlet boundary conditions and perturbed Benjamin-One equation with periodic boundary conditions.
		\item[Case 4 (quasi-linear and fully nonlinear perturbation,$p>0,\tilde{p}\geq p$)]: A typical example is KdV with quasi-linear perturbation:\\
		$\partial_{t}u+\partial_{x}^{3}u-6u\partial_{x}u+F(x,u,\partial_{x}u,\partial_{xx}u,\partial_{x}^{3}u)=0,x\in\mathbb{T}$.\\
		And for this case, the representative work (see e.g. \cite{BBP13,BBM,BBHM18,BHM}) belongs to the Italian school.

	\end{description}

     By contrast, work on the space-multidimensional PDEs with unbounded perturbation is only beginning to emerge. Reference can be made to recent studies on the existence of periodic solutions for nonlinear Schr\"odinger and wave equations with fractional derivative perturbations \cite{XYY,YY25}, and quasi-periodic solutions of traveling-wave-like   for quasilinear Schr\"odinger equations \cite{YY26}.
     
     Most of these results can guarantee the existence of invariant tori of any finite dimension. However, in the proofs, the higher the dimension of the torus, the smaller the perturbation needs to be. Therefore, one could not obtain the existence of infinite dimensional tori from them by merely passing to the limit as the  dimension of the invariant tori goes to infinity.
     
     On the other hand, upon consideration of a finite dimensional nearly integrable Hamiltonian system, classical KAM theory guarantees that under suitable non-degeneracy hypotheses a majority of invariant tori persist and most solutions are quasi-periodic in time with frequency vectors obeying a series of Diophantine condition of sufficient strength. This naturally extends to infinite dimensional phase spaces, where one anticipates the existence of infinite dimensional especially full dimensional invariant tori. In other words, almost periodic solutions are heuristically expected to be typical for PDEs. Most studies, however, remain largely confined to quasi-periodic solutions or finite dimensional invariant tori; investigations into almost periodic solutions, especially in settings involving unbounded perturbations, are remarkably sparse (see subsection \ref{402} and table 1 below). A critical obstacle arises from the fact that the small divisors involved are predicted to become ``extremely small". It is a standard observation in the construction of finite dimensional invariant tori that the non-resonance conditions become progressively weaker as the dimension increases (see subsection \ref{401} for detail). The origin of parameters in KAM theory leads to two scenarios: either one works with equations including a potential $V$ from which external parameters are extracted, or, in the absence of external parameters (i.e., for a fixed PDE), one relies on amplitude–frequency modulation to achieve the non‑resonance conditions. We now summarize the existence of infinite dimensional invariant tori for 1d PDEs of different types.

     \begin{table}[htbp]
     	\centering
     	\begin{tabular}{|c|c|c|c|c|}
     		\hline
     		\multirow{2}{*}{} & \multicolumn{2}{c|}{random potential $V$} & \multicolumn{2}{c|}{deterministic potential $V$} \\
     		\cline{2-5}
     		& $\tilde{p}\leq 0$ & $\tilde{p}> 0$ & $\tilde{p}\leq 0$ & $\tilde{p}> 0$ \\
     		\hline
     		infinite but non-full dimension & \cite{BMP} & ? & \cite{BGR} & ? \\
     		\cline{1-3}\cline{4-5}
     		full dimension & \cite{Bou96-GAFA,P3,B3}$\cdots$ &$?^{*}$ & ? & ? \\
     		\hline
     	\end{tabular}
     \caption{Summary table: Infinite dimensional invariant tori for 1d PDEs}
     \end{table}
    We highlight with the notation $?$ some issues which are not yet resolved (as far as we are aware) and $?^{*}$ the problem to be resolved in this paper. It should be pointed out that, for PDEs regarded as infinite dimensional dynamical systems, a full dimensional torus is naturally an infinite dimensional one. The converse, however, does not necessarily hold: an infinite dimensional torus need not be of full dimension. Such infinite but non-full dimensional tori can be viewed as “lower-dimensional” in an appropriate sense, which leaves the freedom to choose suitable tangent and normal directions. Following this idea, although stricter requirements are imposed on the tangential sites, the decay rate of the amplitude can be significantly improved, thereby leading to solutions of lower regularity (see \cite{BMP}). On the other hand, in the case of a deterministic potential (i.e., without external parameters), the situation is more delicate because one must balance the decay speed of the amplitude against the size of the frequency modulation. A breakthrough in this direction is the work \cite{BGR}, where the twist condition is maintained throughout the KAM iteration by a suitable selection of tangential sites. In addition, the corresponding problems for PDEs in higher spatial dimensions remain completely open.

    In this paper, we focus on the derivative nonlinear Schr\"odinger (DNLS) equations on the one dimensional torus $\mathbb{T}:=\mathbb{R}/2\pi\mathbb{Z}$ with a convolution potential $V$:
    \begin{equation}\label{1}
    \mathrm{i}\partial_{t}u+\partial_{xx}u-V\ast u+\mathrm{i}\vert u\vert^{2}\partial_{x}u=0.
    \end{equation}
    For our purposes, it is convenient to identify $V$ with the set of external parameters $\sigma=(\sigma_{n})_{n\in\bar{\mathbb{Z}}}$, where
    \begin{equation}\label{69}
    V=\sum_{n\in\bar{\mathbb{Z}}}\sigma_{n}\phi_{n}(x),\ \phi_{n}(x)=\frac{1}{\sqrt{2\pi}}e^{\mathrm{i}nx},\ \sigma=(\sigma_{n})_{n\in\bar{\mathbb{Z}}}\in \prod_{i\in\mathbb{\bar{Z}}}[0,\frac{1}{\vert i\vert}],\ \bar{\mathbb{Z}}:=\mathbb{Z}\setminus\lbrace 0\rbrace.
    \end{equation}
    
    The DNLS equation is a fundamental canonical model in nonlinear wave dynamics, mainly describing the propagation of Alfvén waves in collisionless plasmas \cite{MOT76} and ultrashort optical pulses in single-mode nonlinear fibers \cite{Agr}, which extends the applicable scope of the standard nonlinear Schrödinger equation. 
    
    \subsection{Statement of the main result}
    
    We first introduce the parameter space and measure. For each interval $[0,\frac{1}{\vert i\vert}],i\in\bar{\mathbb{Z}}$, we define  a probability measure $\mathbb{P}_{i}$ on it by setting $\mathbb{P}_{i}(S):=\vert i\vert mea(S)$ for every Lebesgue measurable subset $S\subset[0,\frac{1}{\vert i\vert}]$, where $mea$ is the standard Lebesgue measure. From this, we can naturally define the measure on the Cartesian product $\prod_{i\in\bar{\mathbb{Z}}}[0,\frac{1}{\vert i\vert}]$ as the product measure of an infinite number of probability measures and denote it by $\mathbb{P}$ (see Appendix \ref{117} for detail).
    
    Our main result is stated as follows:
    \begin{thm}\label{53}
    	Consider  the derivative nonlinear Schr\"odinger equation \eqref{1} with a convolution potential $V$. Then there exists a set $\Pi\subset\prod_{i\in\bar{\mathbb{Z}}}[0,\frac{1}{\vert i\vert}]$ with $\mathbb{P}(\Pi)>0$ such that for every $\sigma\in\Pi$, the equation has a full dimensional invariant torus.
    \end{thm}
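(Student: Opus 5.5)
We will prove Theorem \ref{53} by an infinite-dimensional KAM scheme in the spirit of Bourgain \cite{Bou96-GAFA} and P\"oschel \cite{P3}, adapted to handle the derivative nonlinearity. The potential $V$ provides the external parameters $\sigma$.

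\emph{Hamiltonian setup.} In Fourier coordinates $u=\sum_n q_n\phi_n$, equation \eqref{1} becomes a Hamiltonian system with respect to the symplectic form $\mathrm{i}\sum dq_n\wedge d\bar q_n$. We first check that $\mathrm{i}|u|^2\partial_xu$ is Hamiltonian, either directly or after a gauge transformation of the type used for DNLS. The Hamiltonian is
\[
H=\sum_n \lambda_n |q_n|^2+P,\qquad \lambda_n=n^2+\sigma_n \ (\sigma_0:=0),
\]
where $P$ is quartic and contains one derivative, i.e.\ a factor $n$ on one index. Under the usual scaling $q\mapsto \varepsilon q$ we work near a small torus with prescribed amplitudes $|q_n|^2=I_n$ that decay super-exponentially, e.g.\ $I_n\sim e^{-2r|n|^{\theta}}$ with $\theta<1$ or faster. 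In action-angle variables on all modes, the frequencies are $\omega_n=\lambda_n+\partial_{I_n}\langle P\rangle$. Because $\sigma_n$ enters $\omega_n$ linearly with $\partial\omega_n/\partial\sigma_n=1$, the map $\sigma\mapsto\omega$ is essentially the identity plus a small Lipschitz correction.

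\emph{KAM step in a weighted, almost-diagonal class.} We work in a Bourgain-type Gevrey/sub-exponential norm on Hamiltonians, $\|P\|=\sum_{\alpha,\beta}|P_{\alpha\beta}|e^{\rho\sum_n(\alpha_n+\beta_n)|n|^{\theta}}$. Each step solves the homological equation $\{N,F\}+P^{\rm low}=[P^{\rm low}]$, with divisors $\langle k,\omega\rangle$ for $k\in\mathbb{Z}^{\infty}$ of finite support. We impose the Bourgain-type Diophantine condition
\[
|\langle k,\omega\rangle|\ge \gamma\prod_n\bigl(1+|k_n|^2\langle n\rangle^{4}\bigr)^{-1}.
\]
The measure estimate is done on the product space: for fixed $k$ pick $n_*$ in the support, integrate in $\sigma_{n_*}$ (density $|n_*|$), and sum over $k$. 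This uses the convergent sum $\sum_k\prod_n(1+|k_n|^2\langle n\rangle^4)^{-1}$ and the weight $|n_*|$. Choosing $\gamma$ small leaves a positive-measure set $\Pi$, consistent with the theorem giving $\mathbb{P}(\Pi)>0$ rather than full measure. The loss from the small divisors, $\prod(1+|k_n|^2n^4)$, is absorbed by the sub-exponential weights via $\sup_k \prod_n (1+|k_n| n^2)^2e^{-\delta\sum|k_n||n|^\theta}\le e^{C\delta^{-c}}$, as in Bourgain's argument.

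\emph{Main obstacle: the unbounded derivative.} Because the vector field loses one derivative, the perturbation coefficients grow like $|n|$. At each step the transformed normal form acquires corrections to $\omega_n$ that are unbounded in $n$, of the form $n\cdot(\text{small})$, so they cannot be treated as a small $\ell^\infty$ shift of the frequencies. We plan to handle this in two parts:
\begin{itemize}
\item[(i)] We extract the $n$-proportional part of the averaged term exactly. For the gauge-invariant quartic DNLS term, the dominant frequency correction is $c\,n\sum_m|q_m|^2$-type, i.e.\ affine in $n$. Such a correction only shifts frequencies by $n\cdot a+b$ and keeps $\langle k,\omega\rangle$ controlled, using momentum conservation $\sum n k_n=0$ when $P$ preserves momentum. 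We therefore restrict to momentum-preserving monomials, which kills the unbounded part in the divisors.
\item[(ii)] The remaining corrections are bounded, or even decaying, and are re-absorbed by adjusting $\sigma$ through a Lipschitz inverse of $\sigma\mapsto\omega$ on $\ell^\infty$.
\end{itemize}
The delicate point is showing that the Hamiltonian vector field $X_F$ still generates a well-defined flow in the weighted space despite the loss. We will rely on the super-exponential weight, where a factor $|n|$ costs only $\delta^{-1/\theta}$ when shrinking $\rho\to\rho-\delta$, together with quadratic convergence $\varepsilon_{j+1}\le \varepsilon_j^{4/3}e^{C\delta_j^{-c}}$ with $\delta_j\sim j^{-2}$.

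\emph{Conclusion.} The iterates converge on $\Pi=\bigcap_j\Pi_j$. The limiting map embeds the full-dimensional torus $\mathbb{T}^{\bar{\mathbb{Z}}\cup\{0\}}$ as an invariant torus, and its orbits are almost periodic solutions.
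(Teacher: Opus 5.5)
Your architecture differs from the paper's: you treat all modes at once in a Bourgain-type weighted class, whereas the paper follows P\"oschel and turns the modes $\pm(v+1)$ into action--angle variables at step $v$. The real problem, though, is that neither of the two difficulties created by the derivative is resolved.

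The first gap concerns the diagonal terms. Your homological equation $\{N,F\}+P^{\mathrm{low}}=[P^{\mathrm{low}}]$, with constant divisors $\langle k,\omega\rangle$, assumes that every non-averaged term can be removed by a bounded transformation. Consider terms $R_{nn}(x)|q_n|^2$ (equivalently $\langle R^y(x),y\rangle$) that are diagonal in a high mode $n$ but depend on the other angles. They are absent from the quartic Hamiltonian by momentum conservation. They are generated from the second step on, however: contracting $P$ with $F$ at an index $m=n+e$ generically gives degree-six terms $|q_n|^2q_a\bar q_bq_c\bar q_d$ with coefficients of order $n$. Removing them forces $F\ni F_n(x)|q_n|^2$ with $F_n=\partial_\omega^{-1}\tilde R_{nn}=O(\epsilon n)$, and the time-one map is then $q_n\mapsto e^{\mathrm{i}F_n(x)}q_n$. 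So the loss is not a single factor $|n|$, which $e^{\delta|n|^\theta}$ pays for at cost $\delta^{-1/\theta}$; it is the exponential of that factor. A majorant or weighted-coefficient norm cannot see that $F_n$ is real on the real torus, and on any complex neighbourhood $|e^{\mathrm{i}F_n}|$ grows like $e^{c\epsilon n}$ (times the relevant width). Since $\sup_n e^{c\epsilon n-\delta|n|^\theta}=\infty$ for the sub-exponential weights ($\theta<1$) you actually use, this cannot be absorbed. Equivalently, a loss $\delta^{-1/\theta}\gg\delta^{-1}$ is too strong even for a Cauchy--Kovalevskaya construction of the flow.

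This is the obstruction the paper is organized around. The diagonal part is kept in the normal form as angle-dependent $\Omega_j(x)$. The off-diagonal equations then become small-divisor equations with large variable coefficients. These are solved by Lemma~\ref{118}, using the change of variables $\phi=x+b(x)\omega$ of Section~\ref{119}, and by Lemma~\ref{416} for $i=-j$, $|j|\le K$; the tail $|j|>K$ is exponentially small and goes into the new perturbation. Finally, $\tilde\Omega_j$ is eliminated only for the mode $j=\pm(v+1)$ about to become tangential (Lemma~\ref{51}). There, reality of $F$ gives $|e^{\mathrm{i}F}|\le e^{s\gamma n}$ with $s\gamma n\le 1$. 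Your proposal contains no substitute for this staging.

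The second gap is the twist condition. Your claim that $\sigma\mapsto\omega$ is the identity plus a small Lipschitz correction fails after the first step: the drift has size $\epsilon n$, and $\partial_{\sigma_l}\omega_n$ contains $n\,\partial_{\sigma_l}\bar\lambda$. Point (i) names the right remedy, since momentum and mass conservation remove $n\bar\lambda+\tilde\lambda$ from $\langle k,\omega\rangle$. But you verify the affine structure only for the quartic average. Later drifts come from averages of $\{P,F\}$ and higher brackets, where the factor $t$ in $\partial^2P/\partial q_{m+t}\partial\bar q_{n+t}=tP_1+P_2+P_3/t$ meets the expansion of $F$. If $F$ is only known to be $F_1+F_2/t$ with $t$-dependent $F_2$ (T\"oplitz--Lipschitz type), the bracket contains $tP_1\cdot F_2/t$. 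That is an $O(1)$, $t$-dependent term, and it destroys the affine structure.

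What the paper supplies here are the invariant classes of Definitions~\ref{70} and~\ref{113}. They include a second-order expansion $F=F_1+F_2/t+F_3/t^2$, obtained by solving the homological equation as a power series in $1/t$. This is what yields $\hat\lambda_n=O(\epsilon/|n|)$ together with
$|\partial_{\sigma_l}\hat\lambda_n|\lesssim\epsilon\bigl(|nl|^{-1}+e^{-2\rho|n-l|}+e^{-2\rho|n+l|}\bigr)$.

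Your (ii) merely asserts that the remainder is ``bounded, or even decaying'', and boundedness is not what is needed. An $O(\epsilon)$ shift of $\omega_n$ cannot be ``re-absorbed'' by $\sigma_n\in[0,1/|n|]$ once $|n|\gtrsim\epsilon^{-1}$. More importantly, the measure estimate needs quantitative control of the $\sigma$-derivatives of the remainder, for two reasons:
\begin{enumerate}
\item to guarantee $\partial_{\sigma_{n_*}}\langle k,\omega\rangle\approx k_{n_*}$ for $k$ with large support;
\item to reduce the resonant sets to cylinder sets depending on finitely many $\sigma_j$.
\end{enumerate}
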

    To the best of our knowledge, this paper provides the first result concerning the existence of full dimensional tori in the context of PDEs featuring truly \textit{unbounded} nonlinearities.

	\subsection{Historical remarks on almost periodic solutions}\label{402}

	Before analyzing the difficulties and strategies of the present paper, we first briefly recall some historical results on almost periodic solutions for partial differential equations. The first result concerning almost periodic solutions (full dimensional invariant tori) originated from Bourgain \cite{Bou96-GAFA} who proved the existence of such solutions for the wave equation with a random potential, where the radius of the tori exhibits super-exponential decay. Several years later, in \cite{B3}, the same author considered the nonlinear Schrödinger equation with convolution potential and obtained a full dimensional invariant tori whose radius satisfies a slower decay. After that this result has been continuously revisited and improved by a series of papers (see e.g. \cite{CLSY18,CY21}). In \cite{con23}, the decay rate of the amplitude for full dimensional tori is substantially refined through precise tame estimates and dimension truncation, yielding results that match those obtained by Pöschel \cite{Pos90} in the short-range setting. We also mention that recently following Bourgain's strategy, Biasco-Massetti-Procesi \cite{BMP} obtained the first result on the existence of weak almost periodic solutions for non integrable PDEs in KAM theory. However, while their work represents a breakthrough in the regularity of solutions (corresponding to the decay rate of the torus radius), it makes a concession regarding the support of the solutions. More specifically, they look for a particular (not full dimensional) torus supported on a sparse subset of $\mathbb{Z}$, thereby enabling stronger Diophantine conditions to be imposed and reducing the loss of regularity.
	We recall the following open problem raised by Kuksin \cite{Kuk04}:
	
	\textit{Can the full dimensional KAM tori be expected with a suitable decay for Hamiltonian partial differential equations, for example,}
	$$I_{n}\thicksim\vert n\vert^{-C}$$
	\textit{with some $C>0$ as $\vert n\vert\rightarrow +\infty$?}
	
	On the other hand, solutions with low regularity emerge naturally in PDE settings and carry more physical significance. Nevertheless, obtaining a solution which lies  on a full dimensional torus with only finite regularity—and thereby answering the question posed by Kuksin \cite{Kuk04}—appears to require genuinely new and revolutionary ideas at this stage.
	
	Unlike Bourgain \cite{B3}, who handled all Fourier modes simultaneously under a Diophantine condition which is tailored for infinite dimension, Pöschel \cite{P3} obtained almost periodic solutions via  successive small perturbations of finite dimensional invariant tori. By restricting the analysis to a finite dimensional torus at each step, Pöschel's approach simplifies the Diophantine condition. Inherently, though, this leads to a severe compactness requirement. Specifically, to make the iterative estimates independent of the dimension of the torus, the distance between successive finite dimensional tori must exhibit super-exponential decay. This yields almost periodic solutions with extremely high regularity which are very close to quasi-periodic ones. On the other hand, as the dimension of the torus increases, the persistence of the twist condition requires  the nonlinearity to be regularizing (see subsubsection \ref{305} below), namely
	$$\mathrm{i}\partial_{t}u-\partial_{xx}u+V(x)u+\Psi(f(\vert\Psi u\vert^{2})\Psi u)=0,$$
	where $\Psi$ is a convolution operator which is smoothing of order $\iota>\frac{1}{4}$. More precisely, $\Vert\Psi u\Vert_{H^{s+\iota}}\lesssim \Vert u\Vert_{H^{s}}$ for all $0\leq s\leq 1$ with $\iota>\frac{1}{4}$. By employing the Töplitz-Lipschitz property to describe the drift of frequency more precisely, Geng and Xu \cite{GX} was able to reduce the regularity demand for nonlinear terms and handle the general nonlinear Schrödinger equation with the external parameters.
	
	Even more challenging and physically relevant is the case of a fixed PDE, where parameters need to be extracted from the initial data. Quoting Bourgain \cite{B3}:`` In the absence  of exterior parameters, these [nonresonance] conditions need to be realized from amplitude-frequency modulation and suitable restriction of the action-variables. This problem is harder. Indeed, a fast decay of the action-variables (enhancing convergence of the process) allows less frequency modulation and worse small divisors." Recently following Pöschel's scheme, Bernier-Gr\'{e}bert-Robert \cite{BGR} proved the existence of infinite dimensional invariant tori for the NLS equation on the circle with no external parameters:
	\begin{equation}\label{306}
	\mathrm{i}\partial_{t}u+\partial_{xx}u=f(\vert u\vert^{2})u.
	\end{equation}
	To preserve the twist condition, they selected a sparse subset of $\mathbb{Z}$ and constructed special infinite dimensional tori supported on it by exploiting dispersive properties of \eqref{306}.
	
	The vector fields discussed in the above articles on almost periodic solutions are all bounded. However, in the unbounded cases we are dealing with, the small-denominator problem is further exacerbated, and it remains poorly understood at the present time. Corsi-Montalto-Procesi \cite{CMP} proved the existence of almost periodic response solutions for a forced quasi-linear Airy equation by employing a Craig-Wayne approach combined with a KAM reducibility scheme and pseudo-differential calculus on $\mathbb{T}^{\infty}$. And in \cite{GH}, invariant tori of full dimension for the following equation with the external parameters is obtained via the conservation law and Töplitz-Lipschitz property.
	$$\partial_{t}u=\partial_{x}^{5}u+\partial_{x}(M_{\sigma}u+u^{3}),$$
	where $M_{\sigma}$ is a real Fourier multiplier. This equation resembles the KdV equation formally, but with an additional artificially introduced linear term $M_{\sigma}u$. Precisely this linear term plays a crucial role in the proof.

\subsection{Obstacle and strategy}\label{401}
In this paper, we approach the full dimensional torus through iteration from the finite dimensional ones by following Pöschel's framework. As discussed above, particularly in the unbounded case we are dealing with, preservation of the twist condition is delicate due to the unboundedness of the drift of frequency. To address this, we introduce an asymptotic expansion of the frequency for a more refined description. However, the story is far from over: the ineradicable diagonal terms arising from the unbounded vector field lead to new difficulties. Although the measure estimate in Pöschel’s strategy appears rather involved, a great advantage is that one can benefit from the considerable freedom in selecting the size of domain at each KAM step. In this subsection, we outline our approach without delving into technical details.
\subsubsection{About the homological equations with variable coefficients}
As is well known, unbounded vector fields prevent the diagonal terms of the perturbation from being eliminated and thus they have to be included in the normal form. Consequently, we are led to solve the following homological equations with variable coefficients:
\begin{equation}\label{101}
	-\mathrm{i}\omega\cdot\partial_{x}u+(\lambda+\mu(x))u=r(x).
\end{equation}
Due to the unbounded nature of the vector field, $\mu(x)$ is typically large and involves the angle variables. The equations of this type are called ``small-denominators equations with large variable coefficients" by Kuksin \cite{K2}.

To ensure the KAM iterative procedure works, the solution $u$ should be defined on a strip-type neighborhood of $\mathbb{T}^{n}$ with some width $s>0:D(s):=\lbrace x\in\mathbb{C}^{n}/2\pi\mathbb{Z}^{n}:\vert Imx\vert\leq s\rbrace$. Assume
$$\mu(x)\approx C\gamma,$$
where $C$ is some small constant and $\gamma$ should usually be a large magnitude. Kuksin's Lemma \cite{K2} states that under suitable non-resonant conditions on $\omega$ and the assumption
\begin{equation}
	\vert\lambda\vert^{\theta}\approx\Vert\mu\Vert_{s},0<\theta<1,
\end{equation}
the solution $u$ satisfies the estimate
$$\Vert u\Vert_{s-\sigma}\leq C_{1}exp(C_{2}C_{3}^{\frac{1}{1-\theta}})\Vert r\Vert_{s}.$$
For a Hamiltonian PDE of the form \eqref{121}, the corresponding parameters appearing in the homological equation \eqref{101} are
\begin{equation}\label{421}
   \lambda\approx i^{p}-j^{p}\approx i^{p-1}+j^{p-1},\gamma\approx i^{\tilde{p}}+j^{\tilde{p}},i,j\in\mathbb{Z}. 
\end{equation}
It therefore follows that this lemma is valid for unbounded perturbations with parameter $0<\theta<1$ (i.e., $\tilde{p}<p-1$), and hence can be applied to some equations including mainly the KdV, while it fails for critically unbounded perturbations with parameter $\theta=1$ (i.e., $\tilde{p}=p-1$) as encountered, for example, in the DNLS equation. And for the latter case, Liu-Yuan's lemma \cite{LY10} provides the following estimate
\begin{equation}\label{102}
	\Vert u\Vert_{s-\sigma}\leq C_{0}e^{C\gamma s}\Vert r\Vert_{s},
\end{equation}
where $\Vert r\Vert_{s}:=sup_{x\in D(s)}\vert r(x)\vert$ and $C_{0}$ is a constant depending on the non-resonant conditions of $\omega$. 

Since the parameter $\gamma$ goes into the exponential in the right hand side of \eqref{102}, in this sense, the upper bound of this new estimate looks weaker than that of the original Kuksin's lemma. Fortunately, when $\gamma$ is greater than a certain large truncation constant $K$, there will be no problem of small denominators when dealing with the following truncated homological equations.
\begin{equation}\label{103}
	-\mathrm{i}\omega\cdot\partial_{x}u+\lambda u+\Gamma_{K}(\mu u)=\Gamma_{K}r,
\end{equation}
where $\Gamma_{K}f(x):=\sum_{\vert k\vert\leq K}\hat{f}_{k}e^{\mathrm{i}kx}$. When $K\vert\omega\vert\lesssim \lambda$, the homological equation \eqref{103} can be solved by the implicit function theorem. So the non-trivial case is when $\gamma\lesssim K\vert\omega\vert$. Therefore, when the dimension of the torus $n$ is fixed (where $\vert\omega\vert$ can be regarded as a constant), at the $v^{th}$ KAM iteration, setting $s_{v}=2^{-v},\gamma_{v}\lesssim K_{v}\lesssim 2^{v}$ will suffice to obtain that
\begin{equation}\label{104}
	e^{C\gamma s}\lesssim\epsilon_{v}^{-C},
\end{equation}
where $\epsilon_{v}:=\epsilon^{(\frac{5}{4})^{v}}$ is the size of the perturbation $P_{v}$ and the constant $C$ is sufficiently small. Inequality \eqref{104} can guarantee the KAM procedure to be iterated.

However, in order to achieve a full dimensional invariant torus, according to Poschel's scheme, a new action-angle variable needs to be introduced at each KAM iteration, causing $\omega_{v}\approx v^{C}$ to grow polynomially. Under such circumstances, it follows that
$$\Vert u\Vert_{s-\sigma}\leq C_{0}e^{CKsv^{C}}\Vert r\Vert_{s}.$$
Absorbing the remainder term resulting from truncation leads to the following estimate for the new perturbation $P_{v+1}$:
$$\epsilon_{v+1}=\Vert X_{P_{v+1}}\Vert_{r_{v},a,p-1,D_{v}\times\Pi}\gtrsim(e^{Ksv^{C}}\epsilon_{v}+e^{-Ks})\epsilon_{v}\gtrsim\epsilon_{v}^{1+\frac{1}{v^{C}}}\to \epsilon_{v},\quad \text{as}\; v^C\to\infty,$$
where the definition of the norm is standard in KAM theory (see Section \ref{403}). Because the size $v^C$ of the frequency drifts $\omega_v$ goes to $\infty$ as $v\to\infty$, the quadratic convergence brought about by the Newton iteration no longer exists. This makes it impossible for us to overcome the difficulties caused by small denominators. As a result, Liu–Yuan’s Lemma \cite{LY10} needs to be improved essentially for this case. 
In fact, since $\mu(x)\approx \epsilon_{0}$ is relatively small compared to $\lambda$ when $i\neq\pm j$ in \eqref{421}, the variable-coefficient equation \eqref{101} can be regarded as a perturbation of the constant-coefficient equation and rewritten in the following form.
\begin{equation}\label{105}
	-\mathrm{i}\omega\cdot\partial_{x}u+\lambda(1+a(x))u=r(x),
\end{equation}
where $a(x)$ is relatively small. In \cite{HXY}, the authors derived an estimate for equation \eqref{105} that circumvents the exponential coefficient by absorbing the method from the Italian school's works (see e.g. \cite{BBP13,BBM,BBHM18}):
\begin{equation}\label{412}
	\Vert u\Vert_{\frac{s}{2}}\leq C_{1}\frac{v^{Cv}}{s^{C v}}\Vert u\Vert_{s},
\end{equation}
where $C_{1}$ is a constant that depends on non-resonant conditions of $\omega$, and $C$ represents an absolute constant. The underlying idea is to convert the equation \eqref{105} into a constant‑coefficient form which we are adept at dealing with by the following coordinate transformation
\begin{equation}
	T:x\mapsto \phi=x+b(x)\omega
\end{equation}
with $b(x):=\partial_{\omega}^{-1}a(x)$. In the case of finite dimensional torus considered in \cite{HXY}, the invertibility of $T$ follows readily from the boundedness of $\vert\omega\vert$, the smallness of $a$, and a standard Picard iteration argument. Unfortunately, in the infinite dimensional torus setting we are facing, the growth of $\omega_{v}$ once again becomes the obstacle: the invertibility of $T$ appears to break down, and the estimates in the proof for the bound of $u$ in \cite{HXY} need to be improved further for the case $v\to \infty$.  The detail can be found in section \ref{119}. It is worth pointing out that in the case $i=-j$, the homological equation no longer takes the form given in \eqref{105}; hence our improved estimates cease to be applicable. However, Liu-Yuan's lemma remains valid in this case. This follows from the fact that for sufficiently large $\vert j\vert$, $\Vert R^{v}_{(-j)j}\Vert_{s_{v}}$  decays exponentially and can therefore be absorbed into the new perturbation $P_{v+1}$ directly. Consequently,
$$\Vert X_{P_{v+1}}\Vert_{r_{v},a,p-1,D_{v}\times\Pi}\leq (e^{Ks}\epsilon_{v}+e^{-K})\epsilon_{v}+other \ terms\leq\epsilon_{v+1},$$
and the KAM iteration can still be carried out smoothly.
\subsubsection{About the modulation of frequencies and twist condition}\label{305}
A crucial hypothesis in any classical KAM theorem is the twist condition. Suppose the frequency vector $\omega=(\omega_{j})_{j\in J}$ (with $J\subset\bar{\mathbb{Z}}$ finite or infinite) depends smoothly on parameters $\sigma\in\mathbb{R}^{J}$, one attempts to modulate the frequencies via variation of the parameters. This requires that the derivative $\frac{\partial\omega}{\partial\sigma}$ is invertible in some sense depending on the topology we choose on the set of frequencies and parameters. Extracting parameters from the convolution potential $V$, we write $\omega_{j}=\sigma_{j}+\epsilon\acute{\omega}_{j}$ where $\epsilon$ is a small parameter linked to the size of the initial perturbation and $\acute{\omega}_{j}\approx j^{\tilde{p}}$. When $l^{2}$ is chosen as source and target space, the twist condition reduces to the invertibility of $Id+\epsilon \frac{\partial\acute{\omega}}{\partial\sigma}$ and, if we intend to apply a Neumann series argument, can be rewritten as
\begin{equation}\label{111}
	\Vert \frac{\partial\acute{\omega}}{\partial\sigma}\Vert_{l^{2}\rightarrow l^{2}}\lesssim 1.
\end{equation}
For the nonlinear Schrödinger equation ($\tilde{p}=0$), the left-hand side of \eqref{111} clearly increases as $\sharp J$ grows, which forces us to reduce the size of $\epsilon$ to ensure the twist condition holds. Consequently, $\epsilon=0$ as the iterative step $v\to\infty$. Therefore, we can obtain nothing about the existence of almost periodic solution directly. This is precisely why it is necessary for the nonlinearity to have some smoothing condition ($\tilde{p}<0$) in \cite{P3} and Töplitz-Lipschitz condition was introduced in \cite{GX}. Unfortunately, due to the unbounded nature of the vector field, Töplitz-Lipschitz condition is clearly no longer valid. Nevertheless, through the observation of frequency drift and more precise estimate, we obtain that the frequency has the following asymptotic expansion
\begin{equation}\label{114}
	\acute{\omega}_{j}=j\bar{\omega}+\tilde{\omega}+\frac{\hat{\omega}_{j}}{j},
\end{equation}
where $\bar{\omega},\tilde{\omega}$ are independent of $j$ and $\vert \hat{\omega}\vert\lesssim 1$. Therefore, from the momentum conservation and mass conservation (see Definition \ref{112}), the twist condition \eqref{111} can be replaced by 
$$\Vert \frac{\partial\hat{\omega}}{\partial\sigma}\Vert_{l^{2}\rightarrow l^{2}}\lesssim 1$$
which naturally holds. In order to establish \eqref{114}, we need to ensure the corresponding asymptotic expansion property of Hamiltonian functions $\frac{\partial^{2}P}{\partial q_{m+t}\partial\bar{q}_{n+t}}=tP_{1}+P_{2}+\frac{P_{3}}{t}$ (see Definition \ref{70}) is preserved throughout the iteration. However, the appearance of the unbounded term $tP_{1}$ makes it no longer natural for $\lbrace P,F\rbrace$ to possess this property similarly, and a higher-order asymptotic expansion of $F$ (see Definition \ref{113}) is required and further technical intricacies are encountered. Indeed, in order to establish the asymptotic expansion of the form \eqref{114}, we naturally require that  $P_{1}$ and $P_{2}$ be independent of $t$,  leaving $\frac{P_{3}}{t}$ as the remainder term. If we instead establish for $F$ only an asymptotic expansion $F=F_{1}+\frac{F_{2}}{t}$ similar to that of the Töplitz-Lipschitz condition—where the coefficient of the remainder $F_{2}$ depends on 
$t$—then, the Poisson bracket $\lbrace P,F\rbrace$ naturally yields a $t$-dependent term like $tP_{1}\cdot \frac{F_{2}}{t}=P_{1}F_{2}\approx 1$ that cannot be absorbed into the remainder. Consequently, it becomes necessary to pursue a higher-order asymptotic expansion of $F$, and the following second-order estimate is adequate here. 
$$F=F_{1}+\frac{F_{2}}{t}+\frac{F_{3}}{t^{2}}.$$
Furthermore, to obtain the above expression, we need to derive the asymptotic expansion of the solution from that of the inhomogeneous term in the homological equation. This approach appears to be related to the method of power series solutions for differential equations and is, notably, employed here in KAM theory for the first time. In this sense, compared to Töplitz-Lipschitz property, expression \eqref{114} constitutes a genuine asymptotic expansion of the frequency and gives rise to estimates that are substantially more complex and delicate.
	
	\subsubsection{About normal frequencies depend on the angle variables}
	Assume we are now in the $v^{th}$ KAM iterative step. Write the integrable part $N_{v}$ of the Hamiltonian $H_{v}$,
	$$N_{v}=\sum_{j\in J}\omega_{j}y_{j}+\sum_{j\in\bar{\mathbb{Z}}}\Omega_{j}z_{j}\bar{z}_{j}$$
	 and develop the perturbation $P_{v}$ into the Taylor series in $(y,z,\bar{z})$:
	 $$P_{v}=\epsilon_{v}R_{v}+O(\vert y\vert^{2}+\vert y\vert\Vert z\Vert_{a,p}+\Vert z\Vert_{a,p}^{3}),$$
	 where the norms in the formula above are to be defined in the following section and
	 $$R_{v}=R^{x}+\langle R^{y},y\rangle+\langle R^{z},z\rangle+\langle R^{\bar{z}},\bar{z}\rangle+\langle R^{zz}z,z\rangle+\langle R^{\bar{z}\bar{z}}\bar{z},\bar{z}\rangle+\langle R^{z\bar{z}}z,\bar{z}\rangle.$$
	 Analogous to the procedure in classical KAM theory, we seek a Hamiltonian function that shares the same form as $R_{v}$:
	 $$F_{v}= F^{x}+\langle F^{y},y\rangle+\langle F^{z},z\rangle+\langle F^{\bar{z}},\bar{z}\rangle+\langle F^{zz}z,z\rangle+\langle F^{\bar{z}\bar{z}}\bar{z},\bar{z}\rangle+\langle F^{z\bar{z}}z,\bar{z}\rangle $$
	 which satisfies
	 \begin{equation}\label{131}
	 	\lbrace N_{v},F_{v}\rbrace+R_{v}=0.
	 \end{equation}
	 With $F_{ij}(x)$ and $R_{ij}(x)$ denoting the matrix elements of the operators $F^{z\bar{z}}$ and $R^{z\bar{z}}$, respectively, we obtain from equation \eqref{131} the following equations:
	 \begin{equation}\label{132}
	 	(\langle k,\omega\rangle+\Omega_{i}-\Omega_{j})\hat{F}_{ij}(k)=-\mathrm{i}\hat{R}_{ij}(k),
	 \end{equation}
	 where $\hat{F}_{ij}(k)$ and $\hat{R}_{ij}(k)$ are the $k$-Fourier coefficients of the corresponding functions. In the KAM iteration for bounded perturbation, equation \eqref{132} can be solved directly by
	 $$\hat{F}_{ij}(k)=\frac{-\mathrm{i}\hat{R}_{ij}(k)}{\langle k,\omega\rangle+\Omega_{i}-\Omega_{j}}$$
	 under non-resonant conditions
	 $$\langle k,\omega\rangle+\Omega_{i}-\Omega_{j}\neq 0\ unless\ k=0,i=j.$$
	 In the case of unbounded perturbations, in order for the coordinate transformation $\Phi_{v}=X_{\epsilon_{v}F_{v}}^{t}\vert_{t=1}$ to remain bounded, it is necessary to require that
	 $$\vert \langle k,\omega\rangle+\Omega_{i}-\Omega_{j}\vert\approx \vert i\vert^{\tilde{p}}+\vert j\vert^{\tilde{p}}$$
	 which evidently fails when $i=j$. To circumvent this difficulty, Kuksin ingeniously incorporated the whole $R_{jj}(x)$ into $\Omega_{j}$ as a modification of the normal form, thereby making the latter depends on the variable $x$. While this maneuver does not affect the construction of any finite-dimensional invariant tori, attempting to proceed further by adding action-angle variables following Pöschel's scheme becomes intractable. Nevertheless, we observe that although the unbounded nature of $\Omega$ prevents us from eliminating its all elements at once, it is still feasible to only eradicate  $\Omega_{v}$ in the $v$-th KAM step. At this point, the symplectic transform remains bounded and approaches the identity as the amplitude of $z_{v}$ undergoes rapid decay. To be more precise, at the $v^{th}$ step of the iteration, we first eliminate 
	 $$\Omega_{v}z_{v}\bar{z}_{v}=\sum_{i=1}^{v-1}R_{i,vv}(x)z_{v}\bar{z}_{v}$$
	  prior to introducing new action-angle variables by attempting to seek a Hamiltonian function 
	  $$F_{v}'=F'_{z\bar{z}}(x)z_{v}\bar{z}_{v}=\sum_{i=1}^{v-1}F_{i,vv}(x)z_{v}\bar{z}_{v}$$ which satisfies
	  $$\lbrace N_{v},F_{v}'\rbrace+\Omega_{v}z_{v}\bar{z}_{v}=0$$
	  or equivalently,
	  $$\hat{F}_{i,vv}(k)=\frac{-\mathrm{i}\hat{R}_{i,vv}(k)}{\langle k,\omega\vert_{J_{i}}\rangle}.$$
	  From the observation that the drifted term $R_{i,vv}(x)$ from the $i^{th}$ step is analytic in $\lbrace x\in\mathbb{C}^{i}/2\pi\mathbb{Z}^{i}:\vert Im x\vert\leq s_{i}\rbrace$, we conclude that 
	  \begin{equation}\label{304}
	  	|F_{z\bar{z}}'|\approx v.
	  \end{equation}
	  The Poincaré map generated by the vector field of the Hamiltonian function $F_{v}'$ is $z_{v}\approx e^{\mathrm{i}F'_{z\bar{z}}}z_{v}(0)$. From the small magnitude of the width $s_{v}$ together with the reality of $F'_{z\bar{z}}$ on the real axis, we know that $e^{\mathrm{i}F'_{z\bar{z}}}$ is very close to a unitary operator. Therefore, although the bound \eqref{304} increases progressively as the iteration proceeds, it can still be dominated owing to the rapid decay of perturbation $P_{v}$ and amplitude $z_{v}$.
	\subsection{Remarks and outlook}
	 It should be stressed that the object obtained in Theorem \ref{53} is not merely an infinite dimensional torus but a full dimensional one, which coincides exactly with the result originally established by Pöschel \cite{P3} and Bourgain \cite{B3}. Moreover, by the rational independence of the frequency vector $\omega$, the solution on this full dimensional torus is genuinely almost periodic, and not quasi-periodic.
	
	We expect our result to extend to the following more general family of the derivative nonlinear Schr\"odinger equations 
	$$\mathrm{i}\partial_{t}u+\partial_{xx}u-V\ast u+\mathrm{i}\partial_{x}(f(\vert u\vert^{2})u)=0$$
	and even a broader class of periodic dispersive PDEs whose nonlinear term contains first-order derivative, where $f$ is real analytic in some neighborhood of the origin in $\mathbb{C}$ and $f(0)=0$. This would only make the computations more cumbersome without really shedding more light on the relevant features.
	
	In addition, we can naturally arrange, as in classical KAM theory, that the measure of the excluded parameter set vanishes as the magnitude of the initial perturbation tends to zero. Consequently, we obtain the existence of almost periodic solutions for `almost all' potential. Here, `almost all' potentials means that the complementary set of the Fourier coefficients of  potentials in $\prod_{i\in\bar{\mathbb{Z}}}[0,\frac{1}{\vert i\vert}]$ has $\mathbb{P}$-measure zero.
	
	The paper is organized as follows. In section 2, we introduce some definitions and notations as the preliminary. In section 3-5, a iterative lemma is proved in detail. In section 6, we execute the measure estimate. In section 7, Theorem \ref{53} is proved. The estimate for the solution of small denominator equation with large variable coefficient \eqref{105} is presented in section \ref{119}.
	
	In this paper, our aim is to prove the existence of the full-dimensional invariant torus without pursuing the optimality of various parameters. Therefore, in the following sections, the notation $C$ is used to replace some insignificant constants and may vary in different lines. When it occurs as an exponent, we denote it by $exp$.
	\section{Some functional analysis}\label{403}
	
	In this section, we write the derivative nonlinear  Schr\"odinger equation(DNLS) as an infinitely dimensional Hamiltonian system. And at the same time, we introduce some definitions and notations as the preliminary.
	
	We study these equations as Hamiltonian systems on some suitable phase space $P$. We may take, for example, $P=H_{0}^{2}(\mathbb{T})$, the usual $H^{2}(\mathbb{T})$ functions with vanishing 0-Fourier coefficient $\hat{u}(0)=0$. The same as in \cite{KP}, introducing the inner product
	$$\langle u,v\rangle=Re\int_{\mathbb{T}}u\bar{v} dx,$$
	then \eqref{1} can be written in Hamiltonian form
	\begin{equation}\label{2}
		\begin{aligned}
		&\frac{\partial u}{\partial t}=-\mathrm{i}\nabla H(u),\\
		&H(u)=\frac{1}{2}\int_{\mathbb{T}}\vert\partial_{x}u\vert^{2}dx+\frac{1}{2}\int_{\mathbb{T}}(V\ast u)\bar{u}dx-\frac{\mathrm{i}}{4}\int_{\mathbb{T}}\bar{u}^{2}u\partial_{x}udx,
		\end{aligned}
	\end{equation}
	where the $\nabla$ is defined with respect to $\langle \cdot,\cdot\rangle$.
	
	To write it in infinitely many coordinates, we make the ansatz 
	$$u=\sum_{j\in\bar{\mathbb{Z}}}q_{j}\phi_{j}.$$
	The coordinates are taken from the Hilbert space $l^{a,p}_{\mathbb{C}}$ of all complex-valued sequences $q=(\cdots,q_{-1},q_{1},\cdots)$ with $\Vert q\Vert_{a,p}^{2}=\sum_{j\in\bar{\mathbb{Z}}}\vert q_{j}\vert^{2}\vert j\vert^{2p}e^{2a\vert j\vert}<\infty$. We fix $a\geq 0$ and $p>\frac{3}{2}$ in the following. Then \eqref{2} can be written as
	\begin{equation}\label{54}
		\dot{q_{j}}=-2\mathrm{i}\frac{\partial H}{\partial \bar{q}_{j}},j\in\bar{\mathbb{Z}}
	\end{equation}
	with the Hamiltonian
	\begin{equation}\label{55}
		H(q)=\frac{1}{2}\sum_{j\in\bar{\mathbb{Z}}}(j^{2}+\sigma_{j})\vert q_{j}\vert^{2}-\frac{\mathrm{i}}{4}\int_{\mathbb{T}}(\sum \bar{q}_{j}\phi_{-j})^{2}(\sum q_{j}\phi_{j})(\mathrm{i}\sum jq_{j}\phi_{j})dx.
	\end{equation}
	
	Let $N$ be an infinite dimensional Hamiltonian in the parameter dependent normal form
	$$N=\sum_{j\in J}\omega_{j}(\sigma)y_{j}+\sum_{\bar{\mathbb{Z}}\setminus J}\Omega_{j}(\sigma)z_{j}\bar{z}_{j}$$
	on a phase space $P^{a,p}=\mathbb{T}^{n}\times\mathbb{R}^{n}\times l^{a,p}\times l^{a,p}\ni(x,y,z,\bar{z})$ with symplectic structure $\sum_{j\in J}dx_{j}\wedge dy_{j}+\frac{\mathrm{i}}{2}\sum_{j\in\bar{\mathbb{Z}}\setminus J}d\bar{z}_{j}\wedge dz_{j}$, where $J=\lbrace i_{1},\cdots,i_{n}\rbrace\subset \bar{\mathbb{Z}}$. The tangential frequencies $\omega=(\omega_{i_{1}},\cdots,\omega_{i_{n}})$ and normal frequencies $\Omega=(\Omega_{j})_{j\in\bar{\mathbb{Z}}\setminus J}$ are real vectors depending on parameters $\sigma=(\sigma_{J},\sigma_{\bar{\mathbb{Z}}\setminus J})\in \Pi=\Pi_{n}\times \Pi '\subset \prod_{j\in J}[0,\frac{1}{\vert j\vert}]\times\prod_{j\in\bar{\mathbb{Z}}\setminus J}[0,\frac{1}{\vert j\vert}]$. The map $\omega:\sigma_{J}\mapsto(\omega_{i_{1}},\cdots,\omega_{i_{n}})$ is a one-to-one map between the set $\Pi_{n}$ and its image in $\mathbb{R}^{n}$ for any fixed $\sigma_{\bar{\mathbb{Z}}\setminus J}\in\prod_{j\in\bar{\mathbb{Z}}\setminus J}[0,\frac{1}{\vert j\vert}]$. It should be noted that the definitions of the following norms and domains involve the set of tangential sites $J$, whose augmentation occurs only in a systematic manner throughout the KAM procedure (see Section 5). For notational simplicity and as long as no confusion is possible, the explicit indication of $J$ is omitted.
	
	Changing partial coordinates into action-angle variables, from the Hamiltonian partial differential equation \eqref{54}, \eqref{55} one can obtain a small perturbation $H=N+P$ of the normal form. The perturbation term $P$ is real analytic in the space coordinates and Lipschitz in the parameters, and for each $\sigma\in\Pi$ its Hamiltonian vector field $X_{P}=(P_{y},-P_{x},\mathrm{i}P_{\bar{z}},-\mathrm{i}P_{z})^{T}$ defines near $T_{0}:=\mathbb{T}^{n}\times\lbrace y=0\rbrace\times\lbrace z=0\rbrace\times\lbrace \bar{z}=0\rbrace$ a real analytic map
	$$X_{P}:P^{a,p}\rightarrow P^{a,p-1}.$$
	
	We denote by $P_{\mathbb{C}}^{a,p}$ the complexification of $P^{a,p}$. For $s,r>0$, we introduce the complex neighborhood in $P_{\mathbb{C}}^{a,p}$,
	$$D(s,r):\vert Imx\vert<s,\vert y\vert<r^{2}, \Vert z\Vert_{a,p}<r,\Vert \bar{z}\Vert_{a,p}<r,$$
	and weighed norm for $W=(X,Y,Z,\bar{Z})\in P_{\mathbb{C}}^{a,p-1}$,
	$$\Vert W\Vert_{r,a,p-1}=\vert X\vert+\frac{\vert Y\vert}{r^{2}}+\frac{\Vert Z \Vert_{a,p-1}}{r}+\frac{\Vert \bar{Z} \Vert_{a,p-1}}{r},$$
	where $\vert\cdot\vert$ denotes the sup-norm for complex vectors. Furthermore, for a map $W:D(s,r)\times\Pi\rightarrow P_{\mathbb{C}}^{a,p-1}$, for example, the Hamiltonian vector field $X_{P}$, we define the norms
	$$\Vert W\Vert_{r,a,p-1,D(s,r)\times\Pi}=\sup_{D(s,r)\times\Pi}\Vert W\Vert_{r,a,p-1},$$
	$$\Vert W\Vert_{r,a,p-1,D(s,r)\times\Pi}^{lip}=\sup_{\sigma,\sigma'\in\Pi,\sigma\neq\sigma'}\sup_{D(s,r)}\frac{\Vert\Delta_{\sigma\sigma'} W\Vert_{r,a,p-1}}{\Vert\sigma-\sigma'\Vert_{l^{2}}},$$
	where $\Delta_{\sigma\sigma'}W=W(\cdot;\sigma)-W(
	\cdot;\sigma')$. In a analogous manner, the Lipschitz semi-norm of the frequencies $\omega,\omega^{-1}$ and $\Omega$ are defined as
	$$\vert\omega\vert^{lip}_{\Pi}=\sup_{\sigma,\sigma'\in\Pi,\sigma\neq\sigma'}\frac{\Vert\Delta_{\sigma\sigma'}\omega\Vert_{l^{2}}}{\Vert \sigma-\sigma'\Vert_{l^{2}}},$$
	$$\vert\Omega\vert^{lip}_{-1,\Pi}=\sup_{\sigma,\sigma'\in\Pi,\sigma\neq\sigma'}\sup_{j\in\bar{\mathbb{Z}}\setminus J}\frac{\vert j^{-1}\Delta_{\sigma\sigma'}\Omega_{j}\vert}{\Vert \sigma-\sigma'\Vert_{l^{2}}},$$
	$$\vert\omega^{-1}\vert^{lip}_{\omega(\Pi_{n})\times \Pi'}=\sup_{\sigma_{J},\sigma'_{J}\in\Pi_{n},\sigma_{J}\neq\sigma'_{J},\sigma_{\bar{\mathbb{Z}}\setminus J}\in\Pi'}\frac{\Vert\sigma_{J}-\sigma'_{J}\Vert_{l^{2}}}{\Vert\omega(\sigma_{J},\sigma_{\bar{\mathbb{Z}}\setminus J})-\omega(\sigma'_{J},\sigma_{\bar{\mathbb{Z}}\setminus J})\Vert_{l^{2}}}.$$
	
	In other words, the Lipschitz condition of $\omega^{-1}$ holds with respect to $\sigma_{J}\in\Pi_{n}$ uniformly with respect to $\sigma_{\bar{\mathbb{Z}}\setminus J}\in\Pi'$.
	
	Similarly, for a function $f$ on $D(s,r)\times\Pi$, we introduce the supremum norm
	$$\Vert f\Vert_{D(s,r)\times\Pi}=\sup_{D(s,r)\times\Pi}\vert f\vert.$$
	And to characterize the rate of decay of the derivative with respect to parameters, we define
	
	$$\Vert f\Vert_{D(s,r)\times\Pi,L}=\sup_{D(s,r)\times\Pi}\vert f\vert+\sup_{l\in\bar{\mathbb{Z}}}\sup_{D(s,r)\times\Pi}\vert l\frac{\partial f}{\partial\sigma_{l}}\vert.$$
	
	Now we introduce some definations and notations regarding the asymptotic estimate condition and conserved quantities which are used to describe the drift of frequencies and obtain the measure estimates. Although the asymptotic estimate condition is naturally motivated by power series solutions of differential equations and the Töplitz-Lipschitz property, it becomes considerably more intricate for technical reasons.

	\begin{defn}\label{112}
		A Hamiltonian function $F=\sum_{k,l,\alpha,\beta}F_{kl\alpha\beta}e^{ikx}y^{l}z^{\alpha}\bar{z}^{\beta}$ on $D(s,r)$ is said to satisfy momentum conservation if
		$$\	F_{kl\alpha\beta}=0,\ if\sum_{j\in J}jk_{j}+\sum_{j\in\bar{\mathbb{Z}}\setminus J}j(\alpha_{j}-\beta_{j})\neq 0,$$
		i.e., the $F$-Poisson commutes with the momentum $\sum_{j\in J}jy_{j}+\sum_{j\in\bar{\mathbb{Z}}\setminus J}jz_{j}\bar{z}_{j}$.
		
		A Hamiltonian function $F=\sum_{k,l,\alpha,\beta}F_{kl\alpha\beta}e^{ikx}y^{l}z^{\alpha}\bar{z}^{\beta}$ on $D(s,r)$ is said to satisfy mass conservation if
		$$\	F_{kl\alpha\beta}=0,\ if\sum_{j\in J}k_{j}+\sum_{j\in\bar{\mathbb{Z}}\setminus J}(\alpha_{j}-\beta_{j})\neq 0,$$
		i.e., the $F$-Poisson commutes with the mass $\sum_{j\in J}y_{j}+\sum_{j\in\bar{\mathbb{Z}}\setminus J}z_{j}\bar{z}_{j}$.
	\end{defn}
	\begin{defn}\label{70}
		A Hamiltonian function $P$ on $D(s,r)$ is said to satisfy the first type of asymptotic estimate condition with parameters $\Lambda,\epsilon,\rho$ if there is the following estimate:
		$$\Vert\frac{\partial^{2}P}{\partial q_{m+t}\partial\bar{q}_{n+t}}\Vert_{D(s,r)\times\Pi,L}\leq \max\lbrace \vert m+t\vert,\vert n+t\vert\rbrace\epsilon e^{-\vert n-m\vert\rho},(m+t,n+t)\in\bar{\mathbb{Z}}^{2},$$
		$$\Vert\frac{\partial^{2}P}{\partial q_{m+t}\partial q_{n-t}}\Vert_{D(s,r)\times\Pi,L}\leq \max\lbrace\vert m+t\vert,\vert n-t\vert\rbrace\epsilon e^{-\vert n+m\vert\rho},(m+t,n-t)\in\bar{\mathbb{Z}}^{2},$$
		$$\Vert\frac{\partial^{2}P}{\partial \bar{q}_{m+t}\partial\bar{q}_{n-t}}\Vert_{D(s,r)\times\Pi,L}\leq \max\lbrace\vert m+t\vert,\vert n-t\vert\rbrace\epsilon e^{-\vert n+m\vert\rho},(m+t,n-t)\in\bar{\mathbb{Z}}^{2}.$$
		Moreover, when $\vert t\vert \geq \Lambda \max\lbrace \vert m\vert,\vert n\vert\rbrace,t\neq 0 $, there exists some analytic functions $a_{i},b_{i},c_{i}(i=1,2,3)$ on $D(s,r)$ such that the following asymptotic expansions hold.
		$$\frac{\partial^{2}P}{\partial q_{m+t}\partial\bar{q}_{n+t}}=ta_{1}+b_{1}+c_{1}t^{-1}, $$
		$$\frac{\partial^{2}P}{\partial q_{m+t}\partial q_{n-t}}=ta_{2}+b_{2}+c_{2}t^{-1}, $$
		$$\frac{\partial^{2}P}{\partial \bar{q}_{m+t}\partial\bar{q}_{n-t}}=ta_{3}+b_{3}+c_{3}t^{-1},$$
		where 
		$$\Vert a_{1}\Vert_{D(s,r)\times\Pi},\Vert b_{1}\Vert_{D(s,r)\times\Pi},\Vert c_{1}\Vert_{D(s,r)\times\Pi}\leq\epsilon e^{-\vert n-m\vert\rho},$$
			$$\Vert a_{i}\Vert_{D(s,r)\times\Pi},\Vert b_{i}\Vert_{D(s,r)\times\Pi},\Vert c_{i}\Vert_{D(s,r)\times\Pi}\leq\epsilon e^{-\vert n+m\vert\rho},i=2,3,$$
			$$\sup_{D(s,r)\times\Pi}\vert\frac{\partial c_{1}}{\partial \sigma_{d}}\vert\leq\epsilon(e^{-\vert n-m\vert\rho}\frac{1}{\vert d\vert}+\vert t\vert e^{-\vert m+t-d\vert\rho}e^{-\vert n+t-d\vert\rho}+\vert t\vert e^{-\vert -m-t-d\vert\rho}e^{-\vert -n-t-d\vert\rho}),\forall d\in\bar{\mathbb{Z}},$$
			$$\sup_{D(s,r)\times\Pi}\vert\frac{\partial c_{i}}{\partial\sigma_{d}}\vert\leq\epsilon(e^{-\vert n+m\vert\rho}\frac{1}{\vert d\vert}+\vert t\vert e^{-\vert m+t-d\vert\rho}e^{-\vert -n+t-d\vert\rho}+\vert t\vert e^{-\vert -m-t-d\vert\rho}e^{-\vert n-t-d\vert\rho}),\forall d\in\bar{\mathbb{Z}},i=2,3.$$
		 At the same time, $a_{1},b_{1}$ can be expanded as Taylor series with respect to $\frac{1}{t}$:
		 $$a_{1}=a_{1}^{1}+a_{1}^{2}t^{-1}+a_{1}^{3}t^{-2}.$$
		 $$b_{1}=b_{1}^{1}+b_{1}^{2}t^{-1}.$$
		 Here $$\Vert a_{1}^{1}\Vert_{D(s,r)\times\Pi,L},\Vert b_{1}^{1}\Vert_{D(s,r)\times\Pi,L}\leq \epsilon e^{-\vert n-m\vert\rho},$$
		 $$\Vert a_{1}^{2}\Vert_{D(s,r)\times\Pi,L},\Vert b_{1}^{2}\Vert_{D(s,r)\times\Pi}\leq (\vert m\vert+\vert n\vert)\epsilon e^{-\vert n-m\vert\rho},$$
		 $$\Vert a_{1}^{3}\Vert_{D(s,r)\times\Pi}\leq (\vert m\vert+\vert n\vert)^{2}\epsilon e^{-\vert n-m\vert\rho},$$
		 $$\sup_{D(s,r)\times\Pi}\vert \frac{\partial a_{1}^{3}}{\partial\sigma_{d}}\vert\leq\epsilon[(\vert m\vert+\vert n\vert)^{2}e^{-\vert n-m\vert\rho}\frac{1}{\vert d\vert}+\vert t\vert e^{-\vert m+t-d\vert\rho}e^{-\vert n+t-d\vert\rho}+\vert t\vert e^{-\vert -m-t-d\vert\rho}e^{-\vert -n-t-d\vert\rho} ],\forall d\in\bar{\mathbb{Z}},$$
		 $$\sup_{D(s,r)\times\Pi}\vert \frac{\partial b_{1}^{2}}{\partial\sigma_{d}}\vert\leq\epsilon[(\vert m\vert+\vert n\vert) e^{-\vert n-m\vert\rho}\frac{1}{\vert d\vert}+\vert t\vert e^{-\vert m+t-d\vert\rho}e^{-\vert n+t-d\vert\rho}+\vert t\vert e^{-\vert -m-t-d\vert\rho}e^{-\vert -n-t-d\vert\rho} ],\forall d\in\bar{\mathbb{Z}}.$$ And $a_{1}^{1},a_{1}^{2},b_{1}^{1}$ are independent of $t$. Similarly, $f=a_{2},a_{3},g=b_{2},b_{3}$ also have this expansion:
		 $$f=f_{1}+f_{2}t^{-1}+f_{3}t^{-2},$$
		 $$g=g_{1}+g_{2}t^{-1}.$$
		 Here 
		 $$\Vert f_{1}\Vert_{D(s,r)\times\Pi,L},\Vert g_{1}\Vert_{D(s,r)\times\Pi,L}\leq \epsilon e^{-\vert n+m\vert\rho},$$
		 $$\Vert f_{2}\Vert_{D(s,r)\times\Pi,L},\Vert g_{2}\Vert_{D(s,r)\times\Pi}\leq (\vert m\vert+\vert n\vert)\epsilon e^{-\vert n+m\vert\rho},$$
		 $$\Vert f_{3}\Vert_{D(s,r)\times\Pi}\leq (\vert m\vert+\vert n\vert)^{2}\epsilon e^{-\vert n+m\vert\rho},$$
		 $$\sup_{D(s,r)\times\Pi}\vert\frac{\partial f_{3}}{\partial\sigma_{d}}\vert\leq\epsilon[(\vert m\vert+\vert n\vert)^{2}e^{-\vert n+m\vert\rho}\frac{1}{\vert d\vert}+\vert t\vert e^{-\vert m+t-d\vert\rho}e^{-\vert -n+t-d\vert\rho}+\vert t\vert e^{-\vert -m-t-d\vert\rho}e^{-\vert n-t-d\vert\rho}],\forall d\in\bar{\mathbb{Z}},$$
		 $$\sup_{D(s,r)\times\Pi}\vert\frac{\partial g_{2}}{\partial\sigma_{d}}\vert\leq\epsilon[(\vert m\vert+\vert n\vert)e^{-\vert n+m\vert\rho}\frac{1}{\vert d\vert}+\vert t\vert e^{-\vert m+t-d\vert\rho}e^{-\vert -n+t-d\vert\rho}+\vert t\vert e^{-\vert -m-t-d\vert\rho}e^{-\vert n-t-d\vert\rho}],\forall d\in\bar{\mathbb{Z}}.$$
		  And $f_{1},f_{2},g_{1}$ are independent of $t$. Throughout the paper, when $m=n=0$, we set $(\vert m\vert+\vert n\vert)=1$ by a slight abuse of notations.
		 
		  For convenience we briefly denote this condition as $P\in FAE_{\Lambda,\epsilon,\rho}$.
		  \end{defn}
		\begin{defn}\label{113}
			A Hamiltonian function $F$ on $D(s,r)$ is said to satisfy the second type of asymptotic estimate condition with parameters $\Lambda,\epsilon,\rho$ if there is the following estimate:
			$$\Vert\frac{\partial^{2}F}{\partial q_{m+t}\partial\bar{q}_{n+t}}\Vert_{D(s,r)\times\Pi,L}\leq \epsilon e^{-\vert n-m\vert\rho},(m+t,n+t)\in\bar{\mathbb{Z}}^{2},$$
			$$\Vert\frac{\partial^{2}F}{\partial q_{m+t}\partial q_{n-t}}\Vert_{D(s,r)\times\Pi,L}\leq \epsilon e^{-\vert n+m\vert\rho},(m+t,n-t)\in\bar{\mathbb{Z}}^{2},$$
			$$\Vert\frac{\partial^{2}P}{\partial \bar{q}_{m+t}\partial\bar{q}_{n-t}}\Vert_{D(s,r)\times\Pi,L}\leq \epsilon e^{-\vert n+m\vert\rho},(m+t,n-t)\in\bar{\mathbb{Z}}^{2}.$$
			Moreover, when $\vert t\vert \geq \Lambda \max\lbrace \vert m\vert,\vert n\vert\rbrace,t\neq 0 $, there exists some analytic functions $a_{i},b_{i},c_{i}(i=1,2,3)$ on $D(s,r)$ such that the following asymptotic expansions hold.
			$$\frac{\partial^{2}F}{\partial q_{m+t}\partial\bar{q}_{n+t}}=a_{1}+b_{1}t^{-1}+c_{1}t^{-2}, $$
			$$\frac{\partial^{2}F}{\partial q_{m+t}\partial q_{n-t}}=a_{2}+b_{2}t^{-1}+c_{2}t^{-2}, $$
			$$\frac{\partial^{2}F}{\partial \bar{q}_{m+t}\partial\bar{q}_{n-t}}=a_{3}+b_{3}t^{-1}+c_{3}t^{-2},$$
			where 
			$a_{i},b_{i},c_{i}(i=1,2,3)$ satisfy the same condition as that in Definition \ref{70}.
			
			For convenience we briefly denote this condition as $F\in SAE_{\Lambda,\epsilon,\rho}$.
		\end{defn}

	Before concluding this section, we introduce some notations that will be frequently used in the following text.
	
	$\forall k\in\mathbb{Z}^{J},l\in\mathbb{Z}^{\bar{\mathbb{Z}}\setminus J}$, let $\langle k\rangle=\max\lbrace 1,\vert k\vert\rbrace,\langle l\rangle_{2}=\max\lbrace 1,\vert\sum j^{2}l_{j}\vert\rbrace$. Moreover, for an analytic function $u$ on $D(s)=\mathbb{T}^{J}_{s}:=\lbrace x\in\mathbb{C}^{J}/2\pi\mathbb{Z}^{J}:\vert Imx\vert\leq s\rbrace$, we define $\Vert u\Vert_{s,\tau}=\sum_{k\in\mathbb{Z}^{J}}\vert\hat{u}(k)\vert k\vert^{\tau}e^{\vert k\vert s}$, where $\hat{u}(k)=(2\pi)^{-\sharp J}\int_{\mathbb{T}^{J}}u(x)e^{-\mathrm{i}k\cdot x}dx$ is the $k$-Fourier coefficient of $u$. $\bar{u}=[u]=\frac{1}{(2\pi)^{\sharp J}}\int_{\mathbb{T}^{J}}u(x)dx$ and $\tilde{u}=u-\bar{u}$. 
\section{The homological equations}\label{301}
\subsection{Derivation of homological equations}	
Let $N$ be the normal form
$$N=\sum_{j\in J}\omega_{j}(\sigma)y_{j}+\sum_{j\in\bar{\mathbb{Z}}\setminus J}\Omega_{j}(x,\sigma)z_{j}\bar{z}_{j}$$
and $R$ be the 2-order Taylor polynomial truncation of $P$, that is,
$$ R=R^{x}+\langle R^{y},y\rangle+\langle R^{z},z\rangle+\langle R^{\bar{z}},\bar{z}\rangle+\langle R^{zz}z,z\rangle+\langle R^{\bar{z}\bar{z}}\bar{z},\bar{z}\rangle+\langle R^{z\bar{z}}z,\bar{z}\rangle,$$
 where $R^{x},R^{y},R^{z},R^{\bar{z}},R^{zz},R^{\bar{z}\bar{z}},R^{z\bar{z}}$ depend on $x$ and $\sigma$. $\Omega_{j}(j\in\bar{\mathbb{Z}}\setminus J)$ have the decomposition $\Omega_{j}=\sum_{i=0}^{n}\Omega_{j}^{i}$ and $\Omega_{j}^{i}$ depends only on $x\in \mathbb{T}^{J_{i}}_{s}$ and $\sigma\in\Pi$, where $J_{0}\subset J_{1}\subset\cdots\subset J_{n}=J$. The coordinate transformation $\Phi$ is obtained as the time-1-map $X^{t}_{F}\vert_{t=1}$ of a Hamiltonian vector field $X_{F}$, where $F$ is of the same form as $R$:
 $$ F=F^{x}+\langle F^{y},y\rangle+\langle F^{z},z\rangle+\langle F^{\bar{z}},\bar{z}\rangle+\langle F^{zz}z,z\rangle+\langle F^{\bar{z}\bar{z}}\bar{z},\bar{z}\rangle+\langle F^{z\bar{z}}z,\bar{z}\rangle.$$
 Denote $\partial_{\omega}=\sum_{j\in J}\omega_{j}\frac{\partial}{\partial x_{j}},\Lambda=diag(\Omega_{j},j\in\bar{\mathbb{Z}}\setminus J)$. Then we have
 \begin{subequations}
 	\begin{align}
 	H_{+}&=H\circ\Phi\\
 	=&(N+R)\circ X_{F}^{1}+(P-R)\circ X_{F}^{1}\\
 	=&N+\lbrace N,F\rbrace+R+\int_{0}^{1}\lbrace (1-t)\lbrace N,F\rbrace+R,F\rbrace\circ X_{F}^{t}dt+(P-R)\circ X_{F}^{1}\\
 	=&N+\sum_{j\in\bar{\mathbb{Z}}\setminus J}\langle\partial_{x}\Omega_{j},F^{y}\rangle z_{j}\bar{z}_{j}+\langle [R^{y}],y\rangle+\langle diag(R^{z\bar{z}})z,\bar{z}\rangle\label{28}\\
 	&+(-\partial_{\omega}F^{x}+R^{x})\label{20}\\
 	&+\langle -\partial_{\omega} F^{y}+R^{y}-[R^{y}],y\rangle\label{21}\\
 	&+\langle -\partial_{\omega} F^{z}+\mathrm{i}\Lambda F^{z}+R^{z},z\rangle\label{22}\\
 	&+\langle -\partial_{\omega} F^{\bar{z}}-\mathrm{i}\Lambda F^{\bar{z}}+R^{\bar{z}},\bar{z}\rangle\label{23}\\
 	&+\langle (-\partial_{\omega}F^{zz}+\mathrm{i}\Lambda F^{zz}+iF^{zz}\Lambda+R^{zz})z,z\rangle\label{24}\\
 	&+\langle (-\partial_{\omega}F^{\bar{z}\bar{z}}-\mathrm{i}\Lambda F^{\bar{z}\bar{z}}-iF^{\bar{z}\bar{z}}\Lambda+R^{\bar{z}\bar{z}})\bar{z},\bar{z}\rangle\label{25}\\
 	&+\langle (-\partial_{\omega}F^{z\bar{z}}-\mathrm{i}\Lambda F^{z\bar{z}}+iF^{z\bar{z}}\Lambda+R^{z\bar{z}}-diag(R^{z\bar{z}}))z,\bar{z}\rangle\label{26}\\
 	&+\int_{0}^{1}\lbrace (1-t)\lbrace N,F\rbrace+R,F\rbrace\circ X_{F}^{t}dt+(P-R)\circ X_{F}^{1}.\label{27}
 	\end{align}
 \end{subequations}

 We wish to find the function $F$ such that \eqref{20}-\eqref{26} vanish. To this end, $F^{x},F^{y},F^{z},F^{\bar{z}},F^{zz},F^{\bar{z}\bar{z}}$ and $F^{z\bar{z}}$ should satisfy the homological equations:
 \begin{subequations}
 	\begin{align}
 	\partial_{\omega} F^{x}&=R^{x},\label{3}\\
 	\partial_{\omega} F^{y}&=R^{y}-[R^{y}],\label{4}\\
 	\partial_{\omega} F^{z}_{j}-\mathrm{i}\Omega_{j} F^{z}_{j}&=R^{z}_{j},\label{5}\\
 	\partial_{\omega} F^{\bar{z}}_{j}+\mathrm{i}\Omega_{j} F^{\bar{z}}_{j}&=R^{\bar{z}},\label{6}\\
 	\partial_{\omega} F^{zz}_{ij}-\mathrm{i}(\Omega_{i}+\Omega_{j})F^{zz}_{ij}&=R^{zz}_{ij},\label{7}\\
 	\partial_{\omega}F^{\bar{z}\bar{z}}_{ij}+\mathrm{i}(\Omega_{i}+\Omega_{j})F^{\bar{z}\bar{z}}_{ij}&=R^{\bar{z}\bar{z}}_{ij},\label{8}\\
 	\partial_{\omega}F^{z\bar{z}}_{ij}+\mathrm{i}(\Omega_{i}-\Omega_{j})F^{z\bar{z}}_{ij}&=R^{z\bar{z}}_{ij},i\neq\pm j,\label{9}\\
 	\partial_{\omega}F^{z\bar{z}}_{(-j)j}+\mathrm{i}(\Omega_{-j}-\Omega_{j})F^{z\bar{z}}_{(-j)j}&=R^{z\bar{z}}_{(-j)j},\vert j\vert\leq K.\label{414}
 	\end{align}
 \end{subequations}

 \subsection{Solving the homological equations}
 
 Equations will be solved  under the following conditions: uniformly on $\Pi$,
 \begin{subequations}
 	\begin{align}
 		\vert \langle k,\omega\vert_{J_{i}}\rangle\vert&\geq\alpha_{i}\frac{1}{\langle k\rangle^{\tau_{i}}},0\neq k\in\mathbb{Z}^{ J_{i}},\label{10}\\
 		\vert\langle k,\omega\rangle+\langle l,\bar{\Omega}\rangle\vert&\geq\beta\frac{\langle l\rangle_{2}}{\langle k\rangle^{\tau_{n}}},k\neq 0,0<\vert l\vert\leq 2,\label{11}\\
 		\vert\langle k,\omega\rangle+\bar{\Omega}_{j}-\bar{\Omega}_{-j}\vert&\geq\beta\frac{\vert j\vert}{\langle k\rangle^{\tau_{n}}},j\in\bar{\mathbb{Z}}\setminus J,\vert j\vert\leq K,\label{415}\\
 		\vert\langle l,\bar{\Omega}\rangle\vert&\geq m\langle l\rangle_{2},0<\vert l\vert\leq 2,\\
 		\vert\tilde{\Omega}_{j}^{i}\vert_{s_{i},\tau_{i}}&\leq \alpha_{i}\gamma_{i}\vert j\vert,\label{12}
 	\end{align}
 \end{subequations}
 with constants $\tau_{i}\geq\sharp J_{i}+10,0<\sum_{i=0}^{n}\gamma_{i}\leq\frac{1}{10},m>0$. Moreover, we assume that $P\in FAE_{\Lambda,\epsilon,\rho}$. For convenience, we denote $\tau_{n}$ simply by $\tau$.
 
 Equations \eqref{3}, \eqref{4} can be easily solved by a standard approach in classical finite dimensional KAM theory, so we only give the related results at the end of this subsection. Equations \eqref{5}-\eqref{8} are easier than \eqref{9} and can be solved in the same way as \eqref{9} done, so we only give the details of solving \eqref{9} and \eqref{414} in the following.
 
 Letting $\Omega_{ij}=\Omega_{i}-\Omega_{j}=\bar{\Omega}_{ij}+\tilde{\Omega}_{ij}$ and dropping the superscript `$z\bar{z}$' for brevity, \eqref{9} become
 \begin{equation}\label{13}
 	-\mathrm{i}\partial_{\omega}F_{ij}+\bar{\Omega}_{ij}F_{ij}+\tilde{\Omega}_{ij}F_{ij}=-\mathrm{i}R_{ij}.
 \end{equation}

 Set $0<\sigma<min\lbrace 1,\frac{s}{5}\rbrace$.  From \eqref{10}, \eqref{11} we get
 $$\begin{aligned}
 \vert\langle k,\omega\vert_{J_{i}}\rangle\vert&\geq\frac{\alpha_{i}}{\vert k\vert^{\tau_{i}}},\\
 \vert\langle k,\omega\rangle+\bar{\Omega}_{ij}\vert&\geq\frac{\beta\vert i^{2}-j^{2}\vert}{1+\vert k\vert^{\tau}},
 \end{aligned}$$
 From \eqref{12} we get
 $$\Vert\tilde{\Omega}_{ij}^{l}\Vert_{s_{l},\tau_{l}}\leq 2\alpha_{l}\gamma_{l}\vert i^{2}-j^{2}\vert.$$
 Applying Lemma \ref{118} to \eqref{13}, we have
 \begin{equation}\label{14}
 	\Vert F_{ij}\Vert_{D(s-2\sigma)}\leq\frac{C}{\beta\vert i^{2}-j^{2}\vert\sigma^{C(n+\tau)}}\Vert R_{ij}\Vert_{D(s-\sigma)}.
 \end{equation}
 
 For \eqref{414}, by \eqref{10},\eqref{415},\eqref{12} and Lemma \ref{416}, we obtain that 
 \begin{equation}\label{417}
 	\Vert F_{(-j)j}\Vert_{D(s-2\sigma)}\leq\frac{C}{\vert j\vert\beta\sigma^{C(n+\tau)}}e^{C\vert j\vert s\sum_{i=0}^{n}\gamma_{i}}\Vert R_{(-j)j}\Vert_{D(s-\sigma)}\leq\frac{C}{\vert j\vert\beta\sigma^{C(n+\tau)}}\Vert R_{(-j)j}\Vert_{D(s-\sigma)}.
 \end{equation}
 if $e^{CK s\sum_{i=0}^{n}\gamma_{i}}\leq \frac{1}{\sigma^{C(n+\tau)}}$.
 
 For a bounded linear operator from $l^{a,p}$ to $l^{a,p-1}$, define its operator norm by $\Vert\cdot\Vert_{a,p-1,p}$. As in Lemma 19.1 of \cite{KP2}, in view of \eqref{14}\eqref{417}, using Lemma \ref{56} below, we get the estimate of $F^{z\bar{z}}$:
 $$\begin{aligned}
 \Vert F^{z\bar{z}}\Vert_{a,p,p,D(s-2\sigma)},\Vert F^{z\bar{z}}\Vert_{a,p-1,p-1,D(s-2\sigma)}&\leq\frac{C}{\beta \sigma^{C(n+\tau)}}\Vert R^{z\bar{z}}\Vert_{a,p-1,p,D(s)}\\
 &\leq\frac{C}{\beta\sigma^{C(n+\tau)}}\Vert X_{R}\Vert_{r,a,p-1,D(s,r)}.
 \end{aligned}$$
 
 Multiplying by $z,\bar{z}$ we then get
 $$\frac{1}{r^{2}}\Vert\langle F^{z\bar{z}}z,\bar{z}\rangle\Vert_{D(s-2\sigma,r)}\leq\Vert F^{z\bar{z}}\Vert_{a,p,p,D(s-2\sigma)},$$
 and finally by Cauchy's estimate we have 
 \begin{equation}\label{17}
 	\Vert X_{\langle F^{z\bar{z}}z,\bar{z}\rangle}\Vert_{r,a,p,D(s-3\sigma,r)}\leq\frac{C}{\beta\sigma^{C(n+\tau)}}\Vert X_{R}\Vert_{r,a,p-1,D(s,r)}.
 \end{equation}

 To obtain the estimate of the Lipschitz semi-norm, we proceed as follows. Shortening $\Delta_{\sigma\sigma'}$ as $\Delta$ and applying it to \eqref{13}, one gets that
 \begin{equation}\label{15}
 	\mathrm{i}\partial_{\omega}(\Delta F_{ij})+\bar{\Omega}_{ij}\Delta F_{ij}+\tilde{\Omega}_{ij}\Delta F_{ij}=-\mathrm{i}\partial_{\Delta\omega}F_{ij}-(\Delta\Omega_{ij})F_{ij}+\mathrm{i}\Delta R_{ij}=:Q_{ij}.
 \end{equation}
 
 We have
 $$\begin{aligned}
 \Vert Q_{ij}\Vert_{D(s-3\sigma)}&\leq\frac{\vert\Delta\omega\vert}{\sigma}\Vert F_{ij}\Vert_{D(s-2\sigma)}+(\vert i\vert+\vert j\vert)\Vert\Delta\Omega\Vert_{-1,D(s)}\Vert F_{ij}\Vert_{D(s-3\sigma)}+\Vert\Delta R_{ij}\Vert_{D(s-3\sigma)}\\
 &\leq\frac{C}{\beta\sigma^{C(n+\tau)}}(\vert\Delta\omega\vert+\Vert\Delta\Omega\Vert_{-1,D(s)})\Vert R_{ij}\Vert_{D(s-\sigma)}+\Vert\Delta R_{ij}\Vert_{D(s-\sigma)}\\
 &\leq\frac{C}{\sigma^{C(n+\tau)}}(\frac{\vert\Delta\omega\vert+\vert\Delta\Omega\vert_{-1,D(s)}}{\beta}\Vert R_{ij}\Vert_{D(s-\sigma)}+\Vert\Delta R_{ij}\Vert_{D(s-\sigma)})
 \end{aligned}$$
 Again applying Lemma \ref{118} to \eqref{15}, we have
 \begin{equation}\label{16}
 	\Vert\Delta F_{ij}\Vert_{D(s-4\sigma)}\leq\frac{C}{\beta\vert i^{2}-j^{2}\vert\sigma^{C(n+\tau)}}(\frac{\vert\Delta\omega\vert+\Vert\Delta\Omega\Vert_{-1,D(s)}}{\beta}\Vert R_{ij}\Vert_{D(s-\sigma)}+\Vert\Delta R_{ij}\Vert_{D(s-\sigma)}).
 \end{equation}
 Similarly, applying Lemma \ref{413} to \eqref{414}, we obtain that
 \begin{equation}\label{418}
 \Vert\Delta F_{(-j)j}\Vert_{D(s-4\sigma)}\leq\frac{C}{\beta\vert j\vert\sigma^{C(n+\tau)}}(\frac{\vert\Delta\omega\vert+\Vert\Delta\Omega\Vert_{-1,D(s)}}{\beta}\Vert R_{(-j)j}\Vert_{D(s-\sigma)}+\Vert\Delta R_{(-j)j}\Vert_{D(s-\sigma)}).
 \end{equation}
 In view of \eqref{16}\eqref{418}, applying Lemma \ref{56} below again, we get the estimates of $\Delta F^{z\bar{z}}$:
 $$\begin{aligned}
 \Vert \Delta &F^{z\bar{z}}\Vert_{a,p,p,D(s-4\sigma)},\Vert \Delta F^{z\bar{z}}\Vert_{a,p-1,p-1,D(s-4\sigma)}\\
 &\leq\frac{C}{\beta\sigma^{C(n+\tau)}}(\frac{\vert\Delta\omega\vert+\Vert\Delta\Omega\Vert_{-1,D(s)}}{\beta}\Vert R^{z\bar{z}}\Vert_{a,p-1,p,D(s)}+\Vert\Delta R^{z\bar{z}}\Vert_{a,p-1,p,D(s)}).
 \end{aligned}$$
 Dividing by $\Vert\sigma-\sigma'\Vert_{l^{2}}\neq 0$ and taking the supremum over $\Pi$, we get
 $$\begin{aligned}
 \Vert  &F^{z\bar{z}}\Vert^{lip}_{a,p,p,D(s-4\sigma)},\Vert  F^{z\bar{z}}\Vert^{lip}_{a,p-1,p-1,D(s-4\sigma)}\\
 &\leq\frac{C}{\beta\sigma^{C(n+\tau)}}(\frac{M}{\beta}\Vert R^{z\bar{z}}\Vert_{a,p-1,p,D(s)\times\Pi}+\Vert R^{z\bar{z}}\Vert^{lip}_{a,p-1,p,D(s)\times\Pi}).
 \end{aligned}$$
 where $M:=\vert\omega\vert_{\Pi}^{lip}+\Vert\Omega\Vert^{lip}_{-1,D(s)\times\Pi}$. Thus, in the same way as \eqref{17}, we get
 \begin{equation}\label{18}
 	\begin{aligned}
 	\Vert  &X_{\langle F^{z\bar{z}}z,\bar{z}\rangle}\Vert^{lip}_{r,a,p,D(s-5\sigma,r)\times\Pi}\\
 	&\leq\frac{C}{\beta\sigma^{C(n+\tau)}}(\frac{M}{\beta}\Vert X_{R}\Vert_{r,a,p-1,D(s,r)\times\Pi}+\Vert X_{R}\Vert^{lip}_{r,a,p-1,D(s,r)\times\Pi}).
 	\end{aligned}
 \end{equation}

 For $\lambda\geq 0$, define
 $$\Vert\cdot\Vert^{\lambda}_{\Pi}=\Vert\cdot\Vert_{\Pi}+\lambda\Vert\cdot\Vert_{\Pi}^{lip}.$$
 Set $0\leq\lambda\leq\frac{\beta}{M}$. From \eqref{17}\eqref{18} we get
 \begin{equation}\label{19}
 	\Vert  X_{\langle F^{z\bar{z}}z,\bar{z}\rangle}\Vert^{\lambda}_{r,a,p,D(s-5\sigma,r)\times\Pi}\\
 	\leq\frac{C}{\beta\sigma^{C(n+\tau)}}\Vert X_{R}\Vert^{\lambda}_{r,a,p-1,D(s,r)\times\Pi}.
 \end{equation}

 Now considering the homological equations \eqref{3}\eqref{4}, by a standard approach in finite dimensional KAM theory, we can easily get
 \begin{equation}\label{30}
 	\Vert X_{F^{x}}\Vert^{\lambda}_{r,a,p,D(s-2\sigma,r)\times\Pi},\Vert X_{\langle F^{y},y\rangle}\Vert^{\lambda}_{r,a,p,D(s-2\sigma,r)\times\Pi}\leq\frac{C}{\beta\sigma^{C(n+\tau)}}\Vert X_{R}\Vert^{\lambda}_{r,a,p-1,D(s,r)\times\Pi}.
 \end{equation}

 For the other terms of $F$, i.e. $\langle F^{z},z\rangle,\langle F^{\bar{z}},\bar{z}\rangle,\langle F^{zz}z,z\rangle,\langle F^{\bar{z}\bar{z}}\bar{z},\bar{z}\rangle$, the same results as \eqref{19} can be obtained. Thus, we finally get the estimate for $F$:
 \begin{equation}\label{33}
 	\Vert X_{F}\Vert^{\lambda}_{r,a,p,D(s-5\sigma,r)\times\Pi}\leq\frac{C}{\beta\sigma^{C(n+\tau)}}\Vert X_{R}\Vert^{\lambda}_{r,a,p-1,D(s,r)\times\Pi}.
 \end{equation}
 
 Following similar steps as above, we can also obtain that
 \begin{equation}\label{41}
 \begin{aligned}
 &\Vert X_{\partial_{\sigma_{d}}F}\Vert_{r,a,p,D(s-5\sigma,r)\times\Pi}\\
 &\leq\frac{C}{\beta\sigma^{C(n+\tau)}}(\frac{D}{\beta}\Vert X_{R}\Vert_{r,a,p-1,D(s,r)\times\Pi}+\Vert X_{\partial_{\sigma_{d}}R}\Vert_{r,a,p-1,D(s,r)\times\Pi})
 \end{aligned}
 \end{equation}
 for $\forall d\in\bar{\mathbb{Z}}$, where $D:=\vert\partial_{\sigma_{d}}\omega\vert_{\Pi}+\Vert\partial_{\sigma_{d}}\Omega\Vert_{-1,D(s)\times\Pi}$.
 \section{The New Hamiltonian}
 From \eqref{28}-\eqref{27} we get the new Hamiltonian
 $$H\circ\Phi=N_{+}+P_{+},$$
 where $N_{+}=$\eqref{28} and 
 $$P_{+}=\int_{0}^{1}\lbrace (1-t)\lbrace N,F\rbrace+R,F\rbrace\circ X_{F}^{t}dt+(P-R)\circ X_{F}^{1}+R',$$ 
 where $R'=\sum_{\vert j\vert>K}R_{(-j)j}z_{-j}\bar{z}_{j}$.
 \subsection{The new normal form}
 In view of \eqref{28}, denote $N_{+}=N+N'$ with
 $$N'=\sum_{j\in J}\omega'_{j}y_{j}+\sum_{j\in\bar{\mathbb{Z}}\setminus J}\Omega'_{j}z_{j}\bar{z}_{j},$$
 with
 \begin{equation}\label{29}
 	\omega'=[R^{y}],
 \end{equation}
 
 $$\Omega'_{j}=R^{z\bar{z}}_{jj}+\langle \partial_{x}\Omega_{j},F^{y}\rangle.$$
 From \eqref{29} we easily get
 \begin{equation}\label{35}
 	\vert\omega'\vert^{\lambda}_{\Pi}\leq C\Vert X_{R}\Vert^{\lambda}_{r,a,p-1,D(s,r)\times\Pi}.
 \end{equation}
 In view of \eqref{30},
 $$\Vert\langle\partial_{x}\tilde{\Omega}_{j}^{l},F^{y}\rangle\Vert_{D(s-\sigma)}\leq\Vert\tilde{\Omega}_{j}^{l}\Vert_{s,\tau_{l}}\Vert X_{\langle F^{y},y\rangle}\Vert_{r,a,p,D(s-\sigma,r)}\leq\frac{C\alpha_{l}\gamma_{l}\vert j\vert}{\beta\sigma^{C(n+\tau)}}\Vert X_{R}\Vert_{r,a,p-1,D(s,r)\times\Pi}.$$
 Thus, together with
 $$\Vert R_{jj}\Vert_{D(s-\sigma)}\leq \vert j\vert\Vert X_{R}\Vert_{r,a,p-1,D(s,r)},$$
 we get
 \begin{equation}\label{31}
 \Vert\Omega'\Vert_{-1,D(s-\sigma)}\leq\frac{C}{\sigma^{C(n+\tau)}}\Vert X_{R}\Vert_{r,a,p-1,D(s,r)}
 \end{equation}
 if $\frac{\sum_{l=0}^{n}\alpha_{l}\gamma_{l}}{\beta}\leq\frac{1}{\sigma^{C(n+\tau)}}$. Applying $\Delta$ to $\Omega'_{j}$, we have
 $$\Delta\Omega'_{j}=\Delta R_{jj}+\langle\partial_{x}\Delta\tilde{\Omega}_{j},F^{y}\rangle+\langle\partial_{x}\tilde{\Omega}_{j},\Delta F^{y}\rangle.$$
 Since
 $$\begin{aligned}
 \Vert\Delta R_{jj}\Vert_{D(s-2\sigma)}&\leq\vert j\vert\Vert\Delta X_{R}\Vert_{r,a,p-1,D(s,r)},\\
 \Vert\langle\partial_{x}\Delta\tilde{\Omega}_{j}^{l},F^{y}\rangle\Vert_{D(s-2\sigma)}&\leq\frac{1}{\sigma}\Vert\Delta\Omega_{j}^{l}\Vert_{D(s-\sigma)}\Vert X_{\langle F^{y},y\rangle}\Vert_{r,a,p,D(s-2\sigma,r)}\\
 &\leq\frac{C\Vert\Delta\Omega_{j}^{l}\Vert_{-1,D(s-\sigma)}\vert j\vert}{\beta\sigma^{C(n+\tau)}}\Vert X_{R}\Vert_{r,a,p-1,D(s,r)},\\
 \Vert\langle\partial_{x}\tilde{\Omega}_{j}^{l},\Delta F^{y}\rangle\Vert_{D(s-2\sigma)}&\leq \Vert\tilde{\Omega}_{j}^{l}\Vert_{s,\tau_{l}}\Vert\Delta X_{\langle F^{y},y\rangle}\Vert_{r,a,p,D(s-2\sigma,r)}\\
 &\leq\alpha_{l}\gamma_{l}\vert j\vert\Vert\Delta X_{\langle F^{y},y\rangle}\Vert_{r,a,p,D(s-2\sigma,r)},
 \end{aligned}$$
 we get
 \begin{equation}\label{32}
 	\Vert\Omega'\Vert^{lip}_{-1,D(s-2\sigma)\times\Pi}\leq\frac{C}{\sigma^{C(n+\tau)}}(\frac{M}{\beta}\Vert X_{R}\Vert_{r,a,p-1,D(s,r)\times\Pi}+\Vert X_{R}\Vert^{lip}_{r,a,p-1,D(s,r)\times\Pi})
 \end{equation}
  if $\frac{\sum_{l=0}^{n}\alpha_{l}\gamma_{l}}{\beta}\leq\frac{1}{\sigma^{C(n+\tau)}}$. Therefore, from \eqref{31}\eqref{32}, we get
 $$\Vert\Omega'\Vert^{\lambda}_{-1,D(s-2\sigma)\times\Pi}\leq\frac{C}{\sigma^{C(n+\tau)}}\Vert X_{R}\Vert^{\lambda}_{r,a,p-1,D(s,r)\times\Pi}.$$
 \subsection{The new perturbation}
 We first estimate the error term $R'$. In view of $P\in FAE_{\Lambda,\epsilon,\rho}$, we have that
 $$\sum_{\vert j\vert>K}\vert j\vert^{2(p-1)}e^{2a\vert j\vert}\vert\frac{\partial R'}{\partial z_{-j}}\vert^{2}=\sum_{\vert j\vert>K}\vert j\vert^{2(p-1)}e^{2a\vert j\vert}\vert R_{(-j)j}\vert^{2}\vert \bar{z}_{j}\vert^{2}\leq r^{2}\epsilon^{2} \sum_{\vert j\vert>K} e^{-4\rho\vert j\vert}$$
 on $D(s,r)$. Similarly, we obtain the estimate for $\frac{\partial R'}{\partial \bar{z}},\frac{\partial R'}{\partial x}$, and therefore we have 
 \begin{equation}\label{419}
 	\Vert X_{R'}\Vert^{\lambda}_{r,a,p-1,D(s-\sigma,r)\times\Pi}\leq\frac{C \epsilon}{\sigma^{C(n+\tau)}} e^{-\rho K}.
 \end{equation}
 
 Now we consider the new perturbation. By setting $R(t)=(1-t)(N'+R')+tR$, we have
 $$X_{P_{+}}=\int_{0}^{1}(X_{F}^{t})^{*}[X_{R(t)},X_{F}]dt+(X_{F}^{1})^{*}(X_{P}-X_{R}).$$
 We assume that
 $$\Vert X_{P}\Vert_{r,a,p-1,D(s,r)\times\Pi}^{\lambda}\leq\frac{\beta\eta^{2}}{B_{\sigma}},$$
  for $0\leq\lambda\leq\frac{\beta}{M}$ with some $0<\eta<\frac{1}{16}$, where $B_{\sigma}=(\tau^{\tau})^{exp}\sigma^{-C\tau}$. Furthermore, suppose that $\epsilon\leq B_{\sigma}\Vert X_{P}\Vert_{r,a,p-1,D(s,r)\times\Pi}^{\lambda}$. Since $R$ is the 2-order Taylor polynomial truncation in $y,z,\bar{z}$ of $P$, we can obtain
  \begin{equation}\label{34}
  	\Vert X_{R}\Vert^{\lambda}_{r,a,p-1,D(s,r)\times\Pi}\leq C\Vert X_{P}\Vert^{\lambda}_{r,a,p-1,D(s,r)\times\Pi},
  \end{equation}
  \begin{equation}\label{36}
  	\Vert X_{P}-X_{R}\Vert^{\lambda}_{\eta r,a,p-1,D(s,4\eta r)\times\Pi}\leq C\eta\Vert X_{P}\Vert^{\lambda}_{r,a,p-1,D(s,r)\times\Pi}.
  \end{equation}
  As in Lemma 19.3 of \cite{KP2}, and from \eqref{33}\eqref{34}, we get
  $$\Vert X_{F}\Vert^{\lambda}_{r,a,p,D(s-5\sigma,r)\times\Pi},\Vert DX_{F}\Vert^{\lambda}_{r,a,p,p,D(s-6\sigma,r)\times\Pi},$$
  $$\Vert DX_{F}\Vert^{\lambda}_{r,a,p-1,p-1,D(s-6\sigma,r)\times\Pi}\leq\frac{\eta^{2}\sigma}{c_{0}}$$
  with some suitable constant $c_{0}\geq 1$. Then the flow $X_{F}^{t}$ of the vector field $X_{F}$ exists on $D(s-7\sigma,\frac{r}{2})$ for $-1\leq t\leq 1$ and takes this domain into $D(s-6\sigma,r)$. Similarly, it takes $D(s-8\sigma,\frac{r}{4})$ into $D(s-7\sigma,\frac{r}{2})$. In the same way as (20.6) in \cite{KP2}, we obtain
  \begin{equation}\label{302}
  \begin{aligned}
  \Vert X_{F}^{t}-id\Vert^{\lambda}_{r,a,p,D(s-7\sigma,\frac{r}{2})\times\Pi}&\leq C\Vert X_{F}\Vert^{\lambda}_{r,a,p,D(s-6\sigma,r)\times\Pi},\\
  \Vert DX_{F}^{t}-I\Vert^{\lambda}_{r,a,p,p,D(s-8\sigma,\frac{r}{4})\times\Pi}&\leq C\Vert DX_{F}\Vert^{\lambda}_{r,a,p,p,D(s-6\sigma,r)\times\Pi},\\
  	\Vert DX_{F}^{t}-I\Vert^{\lambda}_{r,a,p-1,p-1,D(s-8\sigma,\frac{r}{4})\times\Pi}&\leq C\Vert DX_{F}\Vert^{\lambda}_{r,a,p-1,p-1,D(s-6\sigma,r)\times\Pi}.
  \end{aligned}
  \end{equation}
 Also in the same way as (20.7) in \cite{KP2}, we obtain that for any vector field $Y$,
 $$\Vert(X_{F}^{t})^{*}Y\Vert^{\lambda}_{\eta r,a,p-1,D(s-9\sigma,\eta r)\times\Pi}\leq C\Vert Y\Vert^{\lambda}_{\eta r,a,p-1,D(s-7\sigma,4\eta r)\times\Pi}.$$
 From \eqref{34}\eqref{35}\eqref{31}\eqref{419}, we have
 $$\begin{aligned}
 	\Vert[X&_{R(t)},X_{F}]\Vert^{\lambda}_{r,a,p-1,D(s-6\sigma,\frac{r}{2})\times\Pi}\\
 	\leq&C\Vert DX_{R(t)}\Vert^{\lambda}_{r,a,p-1,p,D(s-6\sigma,\frac{r}{2})\times\Pi}\Vert X_{F}\Vert^{\lambda}_{r,a,p,D(s-6\sigma,\frac{r}{2})\times\Pi}\\
 	&+C\Vert DX_{F}\Vert^{\lambda}_{r,a,p-1,p-1,D(s-6\sigma,\frac{r}{2})\times\Pi}\Vert X_{R(t)}\Vert^{\lambda}_{r,a,p-1,D(s-6\sigma,\frac{r}{2})\times\Pi}\\
 	\leq&\frac{C}{\beta \sigma^{C(n+\tau)}}(\Vert X_{P}\Vert^{\lambda}_{r,a,p-1,D(s,r)\times\Pi})^{2}.
 \end{aligned}$$
 Hence, also
 $$\Vert[X_{R(t)},X_{F}]\Vert^{\lambda}_{\eta r,a,p-1,D(s-6\sigma,\frac{r}{2})\times\Pi}\leq\frac{C}{\beta \sigma^{C(n+\tau)}\eta^{2}}(\Vert X_{P}\Vert^{\lambda}_{r,a,p-1,D(s,r)\times\Pi})^{2}.$$
Together with \eqref{36}, we finally arrive at the estimate
 $$\Vert X_{P_{+}}\Vert^{\lambda}_{\eta r,a,p-1,D(s-9\sigma,\eta r)\times\Pi}\leq C(\frac{B_{\sigma}}{\beta\eta^{2}}\Vert X_{P}\Vert^{\lambda}_{r,a,p-1,D(s,r)\times\Pi}+\frac{B_{\sigma} e^{-\rho K}}{\eta^{2}}+\eta)\Vert X_{P}\Vert^{\lambda}_{r,a,p-1,D(s,r)\times\Pi}.$$
 
 \subsection{The persistence of asymptotic estimate condition}

 In this subsection we prove the persistence of asymptotic estimate condition at every KAM step under the following conditions:
 
 (i)$\lambda=((\omega_{j})_{j\in J},(\Omega_{j})_{j\in\bar{\mathbb{Z}}\setminus J})$ satisfies
 \begin{equation}\label{42}
 	\lambda_{n}=\vert n\vert^{2}+\sigma_{n}+n\bar{\lambda}+\tilde{\lambda}+\hat{\lambda}_{n},n\in\bar{\mathbb{Z}}
 \end{equation}
 with estimates for $\forall n,d\in\mathbb{Z}$
 \begin{equation}\label{43}
 	\begin{aligned}
 	&\Vert\hat{\lambda}_{n}\Vert_{D(s,r)\times\Pi}\leq\epsilon_{0}\vert n\vert^{-1},\Vert\frac{\partial}{\partial\sigma_{d}}\hat{\lambda}_{n}\Vert_{D(s,r)\times\Pi}\leq\epsilon_{0}(\vert nd\vert^{-1}+e^{-2\vert n-d\vert\rho}+e^{-2\vert n+d\vert\rho}),\\
 	&\Vert\bar{\lambda}\Vert_{D(s,r)\times\Pi,L},\Vert\tilde{\lambda}\Vert_{D(s,r)\times\Pi,L}\leq\epsilon_{0}.
 	\end{aligned}
 \end{equation}
 
 (ii)$P\in FAE_{\Lambda,\epsilon,\rho}$, $\Vert X_{P}\Vert_{r,a,p-1,D(s,r)\times\Pi}^{\lambda}\leq\epsilon$ and $\Vert X_{\partial_{\sigma_{d}}P}\Vert_{r,a,p-1,D(s,r)\times\Pi}\leq\epsilon\frac{1}{\vert d\vert}$ for $\forall d\in\bar{\mathbb{Z}}$.
 
 We first need to obtain that the solution of homological equation $F\in SAE_{\Lambda,\epsilon^{C},\rho}$ for $0<C<1$ and sufficiently small $\epsilon$. We still only consider the homological equations \eqref{9} and other cases are easier.
 $$-\mathrm{i}\partial_{\omega}F_{ij}+\bar{\Omega}_{ij}F_{ij}+\tilde{\Omega}_{ij}F_{ij}=-\mathrm{i}R_{ij}.$$
  Let $i=m+t\neq \pm j=\pm(n+t)$. From Lemma \ref{118}, we can immediately obtain the estimate
  $$\Vert\frac{\partial^{2}F}{\partial q_{m+t}\partial\bar{q}_{n+t}}\Vert_{D(s-\sigma,r)\times\Pi,L}\leq \epsilon^{C} e^{-\vert n-m\vert\rho},(m+t,n+t)\in\bar{\mathbb{Z}}^{2}.$$
  When $\vert t\vert\geq\Lambda \max\lbrace\vert m\vert,\vert n\vert\rbrace,t\neq 0$, we can write 
  $$R_{m+t\ n+t}=tR_{1}+R_{2}+R_{3}\frac{1}{t}=t(R_{1}^{1}+R_{1}^{2}\frac{1}{t}+R_{1}^{3}\frac{1}{t^{2}})+(R_{2}^{1}+R_{2}^{2}\frac{1}{t})+R_{3}\frac{1}{t}$$ 
  by the first type asymptotic estimate condition of $P$. Since $t$ is sufficiently large, `a big denominator' appears in the homological equations. Therefore, we set $F_{m+t\ n+t}=F_{1}+\frac{F_{2}}{t}+\frac{F_{3}}{t^{2}}$, where $F_{1},F_{2},F_{3}$ represent the solutions to the following equations.
 $$\bar{\Omega}_{m+t\ n+t}F_{1}+\tilde{\Omega}_{m+t\ n+t}F_{1}=-\mathrm{i}tR_{1},$$
  $$\bar{\Omega}_{m+t\ n+t}F_{2}+\tilde{\Omega}_{m+t\ n+t}F_{2}=-\mathrm{i}tR_{2}+\mathrm{i}t\partial_{\omega}F_{1},$$
   $$-\mathrm{i}\partial_{\omega}F_{3}+\bar{\Omega}_{m+t\ n+t}F_{3}+\tilde{\Omega}_{m+t\ n+t}F_{3}=-\mathrm{i}tR_{3}+\mathrm{i}t\partial_{\omega}F_{2}.$$
 
 We first focus on the first algebraic equation and obtain its solution directly.
 $$F_{1}=\frac{-\mathrm{i}tR_{1}}{\Omega_{m+t\ n+t}}.$$
 Since $\Omega_{m+t}=(m+t)^{2}+\sigma_{m+t}+(m+t)\bar{\lambda}+\tilde{\lambda}+c_{m+t}t^{-1},\Omega_{n+t}=(n+t)^{2}+\sigma_{n+t}+(n+t)\bar{\lambda}+\tilde{\lambda}+c_{n+t}t^{-1}$, we have
 $$F_{1}=\frac{-\mathrm{i}tR_{1}}{(m-n)(m+n+\bar{\lambda}+2t)+\sigma_{m+t}-\sigma_{n+t}+(c_{m+t}-c_{n+t})t^{-1}}=\frac{1}{2}\frac{-\mathrm{i}R_{1}}{(m-n)(\frac{m+n+\bar{\lambda}}{2t}+\frac{C}{t^{2}}+1)}$$
 From the Taylor expansion of function $(1+x)^{-1}$ when $\vert x\vert<1$, we have 
 $$F_{1}=F_{1}^{1}+F_{1}^{2}\frac{1}{t}+F_{1}^{3}\frac{1}{t^{2}},$$
 where 
 $$F_{1}^{1}=\frac{-\mathrm{i}R_{1}^{1}}{2(m-n)},$$
 $$F_{1}^{2}=\frac{-\mathrm{i}R_{1}^{2}}{2(m-n)}+\frac{\mathrm{i}R_{1}^{1}}{2(m-n)}\frac{m+n+\bar{\lambda}}{2},$$
 $$
 \begin{aligned}
 F_{1}^{3}=&\frac{-\mathrm{i}R_{1}^{3}}{2(m-n)}\sum_{i=0}^{\infty}(-1)^{i}(\frac{m+n+\bar{\lambda}}{2t}+\frac{C}{t^{2}})^{i}+\frac{\mathrm{i}R_{1}^{2}t}{2(m-n)}\sum_{i=1}^{\infty}(-1)^{i}(\frac{m+n+\bar{\lambda}}{2t}+\frac{C}{t^{2}})^{i}\\
 &+\frac{\mathrm{i}R_{1}^{1}t^{2}}{2(m-n)}[\sum_{i=2}^{\infty}(-1)^{i}(\frac{m+n+\bar{\lambda}}{2t}+\frac{C}{t^{2}})^{i}+\frac{C}{t^{2}}].
 \end{aligned}
 $$
 From these expressions, it is easy to obtain that $\Vert F_{1}^{1}\Vert_{D(s,r)\times\Pi,L}\leq \epsilon^{C}e^{-\vert n-m\vert\rho},\Vert F_{1}^{2}\Vert_{D(s,r)\times\Pi,L}\leq (\vert m\vert+\vert n\vert)\epsilon^{C}e^{-\vert n-m\vert\rho},\Vert F_{1}^{3}\Vert_{D(s,r)\times\Pi}\leq(\vert m\vert+\vert n\vert)^{2}\epsilon^{C}e^{-\vert n-m\vert}$. Now we estimate the derivative of $F_{1}^{3}$ with respect to $\sigma$.
 $$
 \begin{aligned}
 \Vert \frac{\partial F_{1}^{3}}{\partial\sigma_{d}}\Vert\leq&\Vert\frac{\partial R_{1}^{3}}{\partial \sigma_{d}}\Vert\sum_{i=0}^{\infty}\Vert\frac{m+n+\bar{\lambda}}{2t}+\frac{C}{t^{2}}\Vert^{i}+\Vert\frac{\partial R_{1}^{2}}{\partial\sigma_{d}}\Vert\vert t\vert\sum_{i=1}^{\infty}\Vert\frac{m+n+\bar{\lambda}}{2t}+\frac{C}{t^{2}}\Vert^{i}\\
 &+\Vert\frac{\partial R_{1}^{1}}{\partial\sigma_{d}}\Vert\vert t\vert^{2}(\sum_{i=2}^{\infty}\Vert\frac{m+n+\bar{\lambda}}{2t}+\frac{C}{t^{2}}\Vert^{i}+\Vert \frac{C}{t^{2}}\Vert)\\
 &+\Vert R_{1}^{3}\Vert(\Vert\frac{\partial\bar{\lambda}}{\partial\sigma_{d}}\Vert\frac{1}{\vert t\vert}+\Vert\frac{\partial(\sigma_{m+t}-\sigma_{n+t})}{\partial\sigma_{d}}\Vert\frac{1}{\vert t\vert}+\Vert\frac{\partial(c_{m+t}-c_{n+t})}{\partial\sigma_{d}}\Vert\frac{1}{\vert t\vert^{2}}+\Vert\frac{\partial}{\partial\sigma_{d}}\frac{(\frac{m+n+\bar{\lambda}}{2t}+\frac{C}{t^{2}})^{2}}{1+\frac{m+n+\lambda}{2t}+\frac{C}{t^{2}}}\Vert)\\
 &+\Vert R_{1}^{2}\Vert(\Vert\frac{\partial\bar{\lambda}}{\partial\sigma_{d}}\Vert+\Vert\frac{\partial(\sigma_{m+t}-\sigma_{n+t})}{\partial\sigma_{d}}\Vert+\Vert\frac{\partial(c_{m+t}-c_{n+t})}{\partial\sigma_{d}}\Vert\frac{1}{\vert t\vert}+\Vert\frac{\partial}{\partial\sigma_{d}}\frac{(\frac{m+n+\bar{\lambda}}{2t}+\frac{C}{t^{2}})^{2}}{1+\frac{m+n+\lambda}{2t}+\frac{C}{t^{2}}}\Vert\vert t\vert)\\
 	&+\Vert R_{1}^{1}\Vert(\vert t\vert^{2}\Vert\frac{\partial}{\partial\sigma_{d}}\frac{(\frac{m+n+\bar{\lambda}}{2t}+\frac{C}{t^{2}})^{2}}{1+\frac{m+n+\lambda}{2t}+\frac{C}{t^{2}}}\Vert+\vert t\vert\Vert\frac{\partial(\sigma_{m+t}-\sigma_{n+t})}{\partial\sigma_{d}}\Vert+\Vert\frac{\partial(c_{m+t}-c_{n+t})}{\partial\sigma_{d}}\Vert)\\
 	\leq&\epsilon^{C}[(\vert m\vert+\vert n\vert)^{2}\frac{1}{\vert d\vert}e^{-\vert n-m\vert\rho}+\vert t\vert e^{-\vert m+t-d\vert\rho}e^{-\vert n+t-d\vert\rho}]\\
 	&+\epsilon^{C}[(\vert m\vert+\vert n\vert)e^{-\vert n-m\vert\rho}\frac{1}{\vert d\vert}+\vert t\vert e^{-\vert m+t-d\vert\rho}e^{-\vert n+t-d\vert\rho}+\vert t\vert e^{-\vert m+t+d\vert\rho}e^{-\vert n+t+d\vert\rho}]\\
 	\leq&\epsilon^{C}[(\vert m\vert+\vert n\vert)^{2}e^{-\vert n-m\vert\rho}\frac{1}{\vert d\vert}+\vert t\vert e^{-\vert m+t-d\vert\rho}e^{-\vert n+t-d\vert\rho}+\vert t\vert e^{-\vert m+t+d\vert\rho}e^{-\vert n+t+d\vert\rho}].
\end{aligned}
$$
 Similarly, we can obtain the expansion of the solution of the second equation $F_{2}=F_{2}^{1}+F_{2}^{2}\frac{1}{t}$. From the estimate of $F_{2}$, by using Lemma \ref{118}, we have
 $$\Vert F_{3}\Vert_{D(s-2\sigma,r)\times\Pi}\leq\epsilon^{C}e^{-\vert n-m\vert\rho},$$
 $$\sup_{D(s-4\sigma,r)\times\Pi}\vert\frac{\partial F_{3}}{\partial\sigma_{d}}\vert\leq\epsilon^{C}(e^{-\vert n-m\vert\rho}\frac{1}{\vert d\vert}+\vert t\vert e^{-\vert m+t-d\vert\rho}e^{-\vert n+t-d\vert\rho}+\vert t\vert e^{-\vert m+t+d\vert\rho}e^{-\vert n+t+d\vert\rho}).$$
 
 In addition, from \eqref{41}\eqref{42}\eqref{43} and the estimate for vector field $X_{P},X_{\partial_{\sigma_{d}}P}$ in (ii), we have
 \begin{equation}\label{45}
 	\Vert X_{F}\Vert_{r,a,p,D(s-5\sigma,r)\times\Pi}\leq\epsilon^{C},
 \end{equation}
 \begin{equation}\label{44}
 	\Vert X_{\partial_{\sigma_{d}}F}\Vert_{r,a,p,D(s-5\sigma,r)\times\Pi}\leq\epsilon^{C}\frac{1}{\vert d\vert},\forall d\in\bar{\mathbb{Z}}.
 \end{equation}
 
 Now we are at the position to prove the asymptotic estimate for the new perturbation. Given the expression of the new perturbation $P_{+}$ in \eqref{27}, we only need to prove that  $\lbrace P,F\rbrace\in FAE_{\Lambda+\delta,\epsilon^{1+C},\rho-\kappa}$ for some $\delta,\kappa>0$. Assume that $\vert t\vert\geq(\Lambda+\delta)\max\lbrace \vert m\vert,\vert n\vert\rbrace,t\neq 0$.
 $$\lbrace P,F\rbrace=\sum_{j\in J}(\frac{\partial P}{\partial x_{j}}\frac{\partial F}{\partial y_{j}}-\frac{\partial P}{\partial y_{j}}\frac{\partial F}{\partial x_{j}})+\mathrm{i}\sum_{j\in\bar{\mathbb{Z}}\setminus J}(\frac{\partial P}{\partial z_{j}}\frac{\partial F}{\partial\bar{z}_{j}}-\frac{\partial P}{\partial\bar{z}_{j}}\frac{\partial F}{\partial z_{j}}).$$
 $$
 \begin{aligned}
 \frac{\partial^{2}\lbrace P,F\rbrace}{\partial z_{m+t}\partial\bar{z}_{n+t}}=&[\sum_{j\in J}(\frac{\partial^{3}P}{\partial z_{m+t}\partial\bar{z}_{n+t}\partial x_{j}}\frac{\partial F}{\partial y_{j}}-\frac{\partial^{3} P}{\partial z_{m+t}\partial\bar{z}_{n+t}\partial y_{j}}\frac{\partial F}{\partial x_{j}}-\frac{\partial^{2}P}{\partial z_{m+t}\partial y_{j}}\frac{\partial^{2}F}{\partial\bar{z}_{n+t}\partial x_{j}}\\
 &-\frac{\partial^{2}P}{\partial\bar{z}_{n+t}\partial y_{j}}\frac{\partial^{2}F}{\partial z_{m+t}\partial x_{j}}-\frac{\partial P}{\partial y_{j}}\frac{\partial^{3} F}{\partial z_{m+t}\partial \bar{z}_{n+t}\partial x_{j}})]\\
 &+[\mathrm{i}\sum_{j\in\bar{\mathbb{Z}}\setminus J}(\frac{\partial^{3} P}{\partial z_{m+t}\partial\bar{z}_{n+t}\partial z_{j}}\frac{\partial F}{\partial\bar{z}_{j}}-\frac{\partial^{3} P}{\partial z_{m+t}\partial\bar{z}_{n+t}\partial\bar{z}_{j}}\frac{\partial F}{\partial z_{j}})]\\
 &+[\mathrm{i}\sum_{j\in\bar{\mathbb{Z}}\setminus J}(\frac{\partial^{2}P}{\partial z_{m+t}\partial z_{j}}\frac{\partial^{2} F}{\partial\bar{z}_{n+t}\partial\bar{z}_{j}}-\frac{\partial^{2}P}{\partial z_{m+t}\partial\bar{z}_{j}}\frac{\partial^{2}F}{\partial\bar{z}_{n+t}\partial z_{j}}\\
 &+\frac{\partial^{2}P}{\partial \bar{z}_{n+t}\partial z_{j}}\frac{\partial^{2} F}{\partial z_{m+t}\partial\bar{z}_{j}}-\frac{\partial^{2}P}{\partial \bar{z}_{n+t}\partial\bar{z}_{j}}\frac{\partial^{2}F}{\partial z_{m+t}\partial z_{j}})]\\
 &=:I+II+III.
 \end{aligned}
 $$
 
 We first consider II. Based on the expansion $\frac{\partial^{2}P}{\partial z_{m+t}\partial\bar{z}_{n+t}}=tP_{1}+P_{2}+P_{3}\frac{1}{t}$,
 $$\sum_{j\in\bar{\mathbb{Z}}\setminus J}\frac{\partial^{3}P}{\partial z_{m+t}\bar{z}_{n+t}z_{j}}\frac{\partial F}{\partial\bar{z}_{j}}=\sum_{j\in\bar{\mathbb{Z}}\setminus J}t\frac{\partial P_{1}}{\partial z_{j}}\frac{\partial F}{\partial\bar{z}_{j}}+\sum_{j\in\bar{\mathbb{Z}}\setminus J}\frac{\partial P_{2}}{\partial z_{j}}\frac{\partial F}{\partial\bar{z}_{j}}+\sum_{j\in\mathbb{Z}\setminus J}\frac{1}{t}\frac{\partial P_{3}}{\partial z_{j}}\frac{\partial F}{\partial\bar{z}_{j}}.$$
 And owing to the Cauchy estimate, \eqref{45} and \eqref{44}, we have
 $$
 \begin{aligned}
 \Vert\sum_{j\in\bar{\mathbb{Z}}\setminus J}\frac{\partial P_{i}}{\partial z_{j}}\frac{\partial F}{\partial\bar{z}_{j}}\Vert_{D(s-5\sigma,\frac{r}{2})\times\Pi}&\leq C\Vert\frac{\partial P_{i}}{\partial z}\Vert_{-a,-p,D(s-5\sigma,\frac{r}{2})\times\Pi}\Vert\frac{\partial F}{\partial z}\Vert_{a,p,D(s-5\sigma,\frac{r}{2})\times\Pi}\\
 &\leq C\Vert P_{i}\Vert_{D(s-5\sigma,r)\times\Pi}\Vert X_{F}\Vert_{r,a,p,D(s-5\sigma,\frac{r}{2})\times\Pi}\\
 &\leq\epsilon^{1+C}e^{-\vert n-m\vert\rho},i=1,2,3.
 \end{aligned}
 $$
 $$
 \begin{aligned}
 \Vert\frac{\partial}{\partial\sigma_{d}}\sum_{j\in\bar{\mathbb{Z}}\setminus J}\frac{\partial P_{3}}{\partial z_{j}}\frac{\partial F}{\partial\bar{z}_{j}}\Vert_{D(s-5\sigma,\frac{r}{2})\times\Pi}\leq&\Vert\frac{\partial^{2} P_{3}}{\partial \sigma_{d}\partial z}\Vert_{-a,-p,D(s-5\sigma,\frac{r}{2})\times\Pi}\Vert\frac{\partial F}{\partial\bar{z}}\Vert_{a,p,D(s-5\sigma,\frac{r}{2})\times\Pi}\\
 &+\Vert\frac{\partial P_{3}}{\partial z}\Vert_{-a,-p,D(s-5\sigma,\frac{r}{2})\times\Pi}\Vert\frac{\partial^{2}F}{\partial\sigma_{d}\partial\bar{z}}\Vert_{a,p,D(s-5\sigma,\frac{r}{2})\times\Pi}\\
 \leq&C\Vert\frac{\partial P_{3}}{\partial\sigma_{d}}\Vert_{D(s-5\sigma,r)\times\Pi}\Vert X_{F}\Vert_{r,a,p,D(s-5\sigma,\frac{r}{2})\times\Pi}\\
 &+C\Vert P_{3}\Vert_{D(s-5\sigma,r)\times\Pi}\Vert X_{\frac{\partial F}{\partial\sigma_{d}}}\Vert_{r,a,p,D(s-5\sigma,\frac{r}{2})\times\Pi}\\
 \leq&\epsilon^{1+C}(\frac{1}{\vert d\vert}e^{-\vert n-m\vert\rho}+\vert t\vert e^{-\vert m+t-d\vert\rho}e^{-\vert n+t-d\vert\rho}+\vert t\vert e^{-\vert m+t+d\vert\rho}e^{-\vert n+t+d\vert\rho}).
 \end{aligned}
 $$
 The estimate of the other term in II is exactly the same. Now we focus on the cross-term III. To keep the explanation concise, we only take one of them as an example. The estimate for the other terms are completely similar.
 $$
 \begin{aligned}
 \sum_{j\in\bar{\mathbb{Z}}\setminus J}\frac{\partial^{2}P}{\partial z_{m+t}\partial\bar{z}_{j}}\frac{\partial^{2}F}{\partial\bar{z}_{n+t}\partial z_{j}}&=\sum_{l\in(\bar{\mathbb{Z}}\setminus J)-t}\frac{\partial^{2}P}{\partial z_{m+t}\partial\bar{z}_{l+t}}\frac{\partial^{2}F}{\partial\bar{z}_{n+t}\partial z_{l+t}}\\
 &=\sum_{\Lambda\vert l\vert\leq\vert t\vert}(P_{m+t\ l+t}^{1}t+P_{m+t\ l+t}^{2}+P_{m+t\ l+t}^{3}\frac{1}{t})(F_{l+t,n+t}^{1}+F_{l+t,n+t}^{2}\frac{1}{t}+F_{l+t,n+t}^{3}\frac{1}{t^{2}})\\
 &+\sum_{\Lambda\vert l\vert>\vert t\vert,l\in(\bar{\mathbb{Z}}\setminus J)-t}\frac{\partial^{2}P}{\partial z_{m+t}\partial\bar{z}_{l+t}}\frac{\partial^{2}F}{\partial\bar{z}_{n+t}\partial z_{l+t}}\\
 &=\sum_{l\in\mathbb{Z}}(P_{m+t\ l+t}^{1}t+P_{m+t\ l+t}^{2}+P_{m+t\ l+t}^{3}\frac{1}{t})(F_{l+t,n+t}^{1}+F_{l+t,n+t}^{2}\frac{1}{t}+F_{l+t,n+t}^{3}\frac{1}{t^{2}})\\
 &-\sum_{\Lambda\vert l\vert>\vert t\vert}(P_{m+t\ l+t}^{1}t+P_{m+t\ l+t}^{2}+P_{m+t\ l+t}^{3}\frac{1}{t})(F_{l+t,n+t}^{1}+F_{l+t,n+t}^{2}\frac{1}{t}+F_{l+t,n+t}^{3}\frac{1}{t^{2}})\\
 &+\sum_{\Lambda\vert l\vert>\vert t\vert,l\in(\bar{\mathbb{Z}}\setminus J)-t}\frac{\partial^{2}P}{\partial z_{m+t}\partial\bar{z}_{l+t}}\frac{\partial^{2}F}{\partial\bar{z}_{n+t}\partial z_{l+t}}\\
 &=(1)+(2)+(3).
 \end{aligned}
 $$
 Since only when $\vert t\vert\geq\Lambda\max\lbrace \vert m\vert,\vert l\vert\rbrace$ does the asymptotic expansion of $\frac{\partial^{2}P}{\partial z_{m+t}\partial\bar{z}_{l+t}}$ occurs, here we extend it to $\vert t\vert<\Lambda\vert l\vert$. For a fixed $l$, when $\vert t\vert$ is sufficiently large, $P_{m+t,l+t}^{11},P_{m+t,l+t}^{12},P_{m+t,l+t}^{21}$ are well-defined and do not depend on $t$. We can directly perform the identity extension for them. And for $P_{m+t,l+t}^{13},P_{m+t,l+t}^{22},P_{m+t,l+t}^{3}$, we perform zero extension because they are the remainder. The extension of $F$ is completely similar.

 Based on the asymptotic expansions and estimates of $P$ and $F$,
 $$
 \begin{aligned}
 \vert(2)\vert+\vert (3)\vert&\leq\sum_{\Lambda\vert l\vert>\vert t\vert} \vert l\vert^{2}\frac{1}{\vert t\vert}\epsilon^{1+C}e^{-\vert m-l\vert\rho}e^{-\vert l-n\vert\rho}\\
 &\leq\epsilon^{1+C}e^{-\vert n-m\vert(\rho-\kappa)}e^{-\vert t\vert \kappa(\frac{1}{\Lambda}-\frac{1}{\Lambda+\delta})}\frac{1}{\vert t\vert}\sum_{\Lambda\vert l\vert>\vert t\vert} \vert l\vert^{2} e^{-\frac{\kappa}{2}\vert m-l\vert}e^{-\frac{\kappa}{2}\vert n-l\vert}\\
 &\leq\epsilon^{1+C}e^{-\vert n-m\vert(\rho-\kappa)}\frac{1}{\vert t\vert}.\\
 \vert\frac{\partial}{\partial\sigma_{d}}(2)\vert&\leq\sum_{\Lambda\vert l\vert>\vert t\vert}\vert\frac{\partial}{\partial\sigma_{d}}(P_{m+t\ l+t}^{1}t+P_{m+t\ l+t}^{2}+P_{m+t\ l+t}^{3}\frac{1}{t})\vert\vert(F_{l+t,n+t}^{1}+F_{l+t,n+t}^{2}\frac{1}{t}+F_{l+t,n+t}^{3}\frac{1}{t^{2}})\vert\\
 &+\sum_{\Lambda\vert l\vert>\vert t\vert}\vert(P_{m+t\ l+t}^{1}t+P_{m+t\ l+t}^{2}+P_{m+t\ l+t}^{3}\frac{1}{t})\vert\vert\frac{\partial}{\partial\sigma_{d}}(F_{l+t,n+t}^{1}+F_{l+t,n+t}^{2}\frac{1}{t}+F_{l+t,n+t}^{3}\frac{1}{t^{2}})\vert\\
 &\leq\epsilon^{1+C}\sum_{\Lambda\vert l\vert>\vert t\vert}\frac{\vert l\vert}{\vert t\vert}(\frac{\vert l\vert}{\vert d\vert}e^{-\vert l-m\vert\rho}+e^{-\vert m+t-d\vert\rho} e^{-\vert l+t-d\vert\rho}+ e^{-\vert m+t+d\vert\rho}e^{-\vert l+t+d\vert\rho})e^{-\vert l-n\vert\rho}\\
 &+\epsilon^{1+C}\sum_{\Lambda\vert l\vert>\vert t\vert}\vert l\vert e^{-\vert l-m\vert\rho}[ e^{-\vert l-n\vert\rho}\frac{\vert l\vert}{\vert td\vert}+\frac{1}{\vert t\vert} e^{-\vert l+t+d\vert\rho}e^{-\vert n+t+d\vert\rho}+ \frac{1}{\vert t\vert}e^{-\vert n+t-d\vert\rho}e^{-\vert l+t-d\vert\rho}]\\
 &\leq\epsilon^{1+C}(e^{-\vert n-m\vert(\rho-\kappa)}\frac{1}{\vert td\vert}+e^{-\vert m+t-d\vert(\rho-\kappa)}e^{-\vert n+t-d\vert(\rho-\kappa)}+e^{-\vert m+t+d\vert(\rho-\kappa)}e^{-\vert n+t+d\vert(\rho-\kappa)}).
 \end{aligned}
 $$
 The estimate of the derivative of (3) is similar. Therefore, we can classify (2) and (3) as the remainder and group them under category $\lbrace P,F\rbrace_{3}\frac{1}{t}$. As for (1), we only need to categorize it according to the power of $t$: put $\sum_{l\in\mathbb{Z}}P^{1}_{m+t\ l+t}F^{1}_{l+t\ n+t}t$ into $\lbrace P,F\rbrace_{1}t$, put $\sum_{l\in\mathbb{Z}}(P_{m+t\ l+t}^{1}F_{l+t\ n+t}^{2}+P_{m+t\ l+t}^{2}F_{l+t\ n+t}^{1})$ into $\lbrace P,F\rbrace_{2}$ and place all the remaining items into $\lbrace P,F\rbrace_{3}\frac{1}{t}$. Since I is a finite sum and thus easier to estimate, we omit it.
 \section{Iterative Lemma}

Due to the unbounded nature of the vector field of perturbation $P$, the normal frequencies $\Omega=(\Omega_{j})_{j\in\bar{\mathbb{Z}}\setminus J}$ is dependent on the angular variable $x$. Therefore, in order to obtain an invariant torus of higher dimension, we need to eliminate the variable coefficient part of the specific normal frequency before changing that into action-angle variables. To this end, we prove the following lemma.
\begin{lem}\label{51}
	Let $N=\sum_{j\in J}\omega_{j}(\sigma)y_{j}+\Omega_{t}(x,\sigma)z_{t}\bar{z}_{t}+\sum_{j\in\bar{\mathbb{Z}}\setminus J\setminus\lbrace t\rbrace}\Omega_{j}z_{j}\bar{z}_{j}$ be a normal form Hamiltonian and $H=N+P$ be a small perturbation of it on $D(s,r)\times\Pi$, where $J=\lbrace i_{1},\cdots,i_{n}\rbrace\subset\bar{\mathbb{Z}}$. Moreover, $\Omega_{t}$ has the decomposition $\Omega_{t}=\sum_{l=0}^{m}\Omega_{t}^{l}$ and $\Omega_{t}^{l}$ depends only on $x\in \mathbb{T}^{J_{l}}_{s}$ and $\sigma\in\Pi$, where $J_{0}\subset J_{1}\subset\cdots\subset J_{m}=J$. Suppose that
	
	(i)The following Diophantine conditions hold with constants $\tau_{l}\geq 2\sharp J_{l}+10$.
	$$\vert\langle k,\omega\vert_{J_{l}}\rangle\vert\geq\alpha_{l}\frac{1}{\langle k\rangle^{\tau_{l}}},0\neq k\in\mathbb{Z}^{J_{l}},l=0,1,\cdots,m.$$
	
	(ii)There are constants $\gamma_{l}=\gamma_{0}^{(\frac{6}{5})^{l}}(l=0,1,\cdots,m),n$ such that $\Vert\Omega_{t}^{l}\Vert_{s,\tau_{l}+10}\leq\alpha_{l}\gamma_{l} n$, where $\gamma_{0}$ is taken sufficiently small. In addition, the constants satisfy that $\sum_{l=0}^{m}\gamma_{l}sn=:\gamma sn\leq 1$.
	
	(iii)$\Vert X_{P}\Vert^{\lambda}_{r,a,p-1,D\times\Pi}\leq\epsilon\ll 1.$
	
	Then there exists a symplectic coordinate transformation $\Phi:D(s,r')\times\Pi\rightarrow D(s,r)$ which satisfies $\Vert \Phi-id\Vert^{\lambda}_{r,a,p,D(s,r')\times\Pi}\leq 2^{-n^{2}},\Vert D\Phi\Vert^{\lambda}_{r,r,D(s,r')\times \Pi},\Vert D\Phi^{-1}\Vert^{\lambda}_{r,r,D(s,r')\times \Pi}\leq e^{s\gamma n}$ such that $H_{+}=H\circ\Phi=N_{+}+P_{+}=\sum_{j\in J}\omega_{j}y_{j}+\bar{\Omega}_{t}z_{t}\bar{z}_{t}+\sum_{j\in\bar{\mathbb{Z}}\setminus J\setminus\lbrace t\rbrace}\Omega_{j}z_{j}\bar{z}_{j}+P_{+}$ satisfies $\Vert X_{P_{+}}\Vert^{\lambda}_{r,a,p-1,D(s,r')\times\Pi}\leq \epsilon^{1-}$. Here $r'=(2^{-n^{2}})^{exp}r$.
\end{lem}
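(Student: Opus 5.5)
The plan is to remove the $x$-dependent part $\tilde\Omega_t=\Omega_t-\bar\Omega_t$ of the single normal frequency $\Omega_t$ with one explicit symplectic change of variables, a gauge (phase) rotation of the mode $z_t$, rather than with a Newton step. Concretely, look for $F=f(x)z_t\bar z_t$ with $f$ real on real $x$ such that
$$\lbrace N,F\rbrace+\tilde\Omega_t z_t\bar z_t=0,\qquad\text{i.e.}\qquad \partial_\omega f=\tilde\Omega_t .$$
Use the decomposition $\Omega_t=\sum_{l=0}^m\Omega_t^l$ and solve piece by piece. For each $l$ set
$$\hat f^l(k)=\frac{-\mathrm{i}\,\hat{\Omega}_t^l(k)}{\langle k,\omega\vert_{J_l}\rangle},\qquad 0\neq k\in\mathbb{Z}^{J_l}.$$
By (i), $|\hat f^l(k)|\le \alpha_l^{-1}\langle k\rangle^{\tau_l}|\hat\Omega^l_t(k)|$. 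The weighted norm $\Vert\cdot\Vert_{s,\tau_l+10}$ in (ii) absorbs the loss $\langle k\rangle^{\tau_l}$ with no shrinking of the strip, so
$$\Vert f^l\Vert_{s,10}\le \gamma_l n,\qquad \Vert f\Vert_{s}\le \sum_l\gamma_l n=\gamma n .$$
This is the point of taking the norm $\tau_l+10$ in (ii). It also replaces the $v$-dependent loss described around \eqref{304}. The Lipschitz dependence on $\sigma$ is handled as in \eqref{15}–\eqref{16}: apply $\Delta$, use $|\Delta\omega|\lambda\le\beta$-type bounds, and accept the extra factor $\langle k\rangle^{\tau_l}$, which the $+10$ margin again covers.

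Next, compute the time-one flow of $X_F$ explicitly. Since $F$ depends on $x$ and on $z_t\bar z_t$ only, the flow is
$$x\mapsto x,\qquad z_t\mapsto e^{\mathrm{i}f(x)}z_t,\qquad \bar z_t\mapsto e^{-\mathrm{i}f(x)}\bar z_t,\qquad y_j\mapsto y_j-\partial_{x_j}f(x)\,z_t\bar z_t,$$
up to the normalization of the symplectic form, and all other coordinates are fixed. On the complex strip $|\mathrm{Im}\,x|<s$ we have $|e^{\pm \mathrm{i}f}|\le e^{\Vert \mathrm{Im} f\Vert}\le e^{s\gamma n}$, using that $f$ is real on the real torus together with a Cauchy-type bound $|\mathrm{Im} f|\le s\Vert\partial f\Vert$. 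Since $s\gamma n\le1$, this gives the stated bounds on $D\Phi$ and $D\Phi^{-1}$. The $y$-component is of size $\Vert\partial_xf\Vert\,|z_t|^2\lesssim \gamma n\,\sigma^{-1}r'^2$. The component in $z$ is $|e^{\mathrm{i}f}-1|\,|z_t|\lesssim r'$, weighted by $|t|^{p}e^{a|t|}/r$. Choosing $r'=(2^{-n^2})^{exp}r$ small enough therefore yields $\Vert\Phi-id\Vert^{\lambda}\le 2^{-n^2}$, and $\Phi$ maps $D(s,r')$ into $D(s,r)$ because $e^{s\gamma n}r'<r$.

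Finally, the transformed Hamiltonian is
$$H\circ\Phi=N+\lbrace N,F\rbrace+\text{(higher brackets)}+P\circ\Phi .$$
The brackets of $N$ with $F$ terminate: $\lbrace\sum\omega_j y_j,F\rbrace=-\partial_\omega f\, z_t\bar z_t$ exactly cancels $\tilde\Omega_t z_t\bar z_t$. Further brackets vanish, because $\tilde\Omega_t z_t\bar z_t$ and $F$ Poisson-commute, both being functions of $x$ and $|z_t|^2$ only. So $N\circ\Phi=N_+$ exactly, and
$$P_+=P\circ\Phi,\qquad X_{P_+}=D\Phi^{-1}\,X_P\circ\Phi .$$
This gives $\Vert X_{P_+}\Vert\le e^{2s\gamma n}\epsilon$ in the norm with weight $r$. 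Passing to the weight $r'$ in the $y$ and $z$ components costs a factor $(r/r')^2=(2^{n^2})^{exp}$.

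The main obstacle is this last loss. To get $\epsilon^{1-}$ one needs $(2^{n^2})^{exp}\le\epsilon^{-0+}$, which is a smallness relation between $\epsilon$ and $n$ that must be compatible with the iteration. The approach is to use that $\epsilon=\epsilon_v$ decays super-exponentially while $n$ grows only polynomially in $v$, and to choose $\gamma_0$ small enough that the constants in $e^{s\gamma n}$ are dominated. A secondary difficulty is verifying the Lipschitz-in-$\sigma$ part of the norm $\Vert\cdot\Vert^\lambda$ for $f$, where $\omega$ depends on $\sigma$ inside the denominators. For that I would proceed exactly as in the derivation of \eqref{16}.
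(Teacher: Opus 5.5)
Your route is the paper's: $F=f(x)z_t\bar z_t$ with $\partial_\omega f=\tilde\Omega_t$, solved piece by piece using (i). You bound $|\mathrm{Im}\,f|\le s\gamma n$ from the reality of $f$ on the real torus, which is the same Fourier estimate the paper uses. The time-one map is then a phase rotation of $z_t$ plus a shear in $y$. Your observation that $F$ Poisson-commutes with $\tilde\Omega_t z_t\bar z_t$ and with every $\Omega_j(x)z_j\bar z_j$ is correct. Hence the Lie series terminates, and $N\circ\Phi=N_+$ and $P_+=P\circ\Phi$ hold exactly. This fills in what the paper dismisses as ``easily derived''.

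The ``main obstacle'' you single out does not exist for the lemma as stated. The conclusion is $\Vert X_{P_+}\Vert^{\lambda}_{r,a,p-1,D(s,r')\times\Pi}\le\epsilon^{1-}$, which uses the weight $r$ on the smaller domain, not the weight $r'$. You already have $\Phi(D(s,r'))\subset D(s,r)$ and $\Vert D\Phi^{-1}\Vert_{r,r}\le e^{s\gamma n}(1+O(\gamma n\, r'/r))$. Together with $s\gamma n\le 1$, these give $\Vert X_{P_+}\Vert_{r,a,p-1,D(s,r')}\le C\epsilon\le\epsilon^{1-}$ directly. No relation between $\epsilon$ and $n$ is needed, so drop that step. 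The cost of reweighting from $r$ to $r'$ arises only when the lemma is fed into the next KAM step.

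Your claim that the $+10$ margin also covers the Lipschitz part is wrong. Differentiating $1/\langle k,\omega\vert_{J_l}\rangle$ in $\sigma$ costs $|k|\,|\Delta\omega|\,\alpha_l^{-2}\langle k\rangle^{2\tau_l}$, which exceeds $\langle k\rangle^{\tau_l+10}$. Moreover, the lemma assumes nothing about $\Delta\Omega_t^l$. So $\lambda\Vert\cdot\Vert^{lip}$, in particular for the diagonal entry $e^{\mathrm{i}f}$ of $D\Phi$, which is not damped by $r'/r$, needs either a strip reduction or an extra hypothesis. The paper omits this part too, so it is a loose end shared with the paper rather than a defect of your approach.
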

\begin{proof}
	We construct a function $F$ that solves the linear homological equation
	$$\lbrace N,F\rbrace=\tilde{\Omega}_{t}z_{t}\bar{z}_{t}.$$
	Let $F=F(x,\sigma)z_{t}\bar{z}_{t}=\sum_{l=0}^{m}F^{l}(x,\sigma)z_{t}\bar{z}_{t}$. From the form of $\Omega_{t}$, the equation can naturally be decomposed into
	$$\lbrace N,F^{l}(x,\sigma)z_{t}\bar{z}_{t}\rbrace=\tilde{\Omega}^{l}_{t}z_{t}\bar{z}_{t},l=0,1,\cdots,m.$$
	Due to the real nature of $F^{l}(x)$ on the $x$-axis, when the imaginary part of $x$ is relatively small, we can obtain the following estimate.
	$$\begin{aligned}
		\vert F^{l}(x)-F^{l}(Rex)\vert&=\vert\sum\hat{F}^{l}(k)(e^{\mathrm{i}kx}-e^{\mathrm{i}kRex})\vert\\&\leq\sum\vert\hat{F}^{l}(k)\vert e^{-kImx}-1\vert\\
		&\leq\sum\frac{\vert\hat{\Omega}^{l}_{t}(k)\vert}{\vert\langle k,\omega\vert_{J_{l}}\rangle\vert}e^{\vert k\vert s}\vert k\vert s\\
		&\leq \gamma_{l} sn,	
	\end{aligned}
	$$
	\begin{equation}\label{133}
		\vert e^{\mathrm{i}F(x)}\vert=\vert e^{\mathrm{i}F(x)}e^{-\mathrm{i}F(Rex)}\vert\leq e^{\vert F(x)-F(Rex)\vert}\leq e^{s\gamma n}.
	\end{equation}
	Consider the Hamiltonian canonical equations generated by $F$
	$$\left\{ \begin{array}{ll}
	\dot{x}=0,\\
	\dot{y}=\partial_{x}F(x)z_{t}\bar{z}_{t},\\
	\dot{z}_{t}=2\mathrm{i}F(x)z_{t},\\
	\dot{\bar{z}}_{t}=-2\mathrm{i}F(x)\bar{z}_{t},\\
	\dot{z}_{j}=\dot{\bar{z}}_{j}=0,j\in\bar{\mathbb{Z}}\setminus J\setminus\lbrace t\rbrace.
	\end{array}\right. $$
	From this, one readily deduces the Poincaré map
	$$ X_{F}^{1}=\begin{pmatrix}
	x(0)\\
	y(0)+\partial_{x} F(x(0))z_{t}(0)\bar{z}_{t}(0)\\
	z_{t}(0)e^{2\mathrm{i} F(x(0))}\\
	\bar{z}_{t}(0)e^{-2\mathrm{i} F(x(0))}\\
	z_{j}(0)\\
	\bar{z}_{j}(0)
	\end{pmatrix}.$$
	
	 Based on the above estimate \eqref{133} of $F$, we obtain that $\vert z_{t}(1)\vert=\vert e^{iF(x)}z_{t}(0)\vert\leq C\vert z_{t}(0)\vert$ and $\vert y(1)\vert\leq\vert y(0)\vert+\int_{0}^{1}\vert\partial_{x}F(x)z_{t}\bar{z}_{t}\vert d\tau\leq\vert y(0)\vert+ Cn\vert z_{t}(0)\bar{z}_{t}(0)\vert$. Then the flow $X_{F}^{1}$ of the vector field $X_{F}$ exists on $D(s,r')$ and takes this domain into $D(s,r)$. The remaining conclusions can be easily derived from the classical KAM theory and we omit the proof.
\end{proof}
The preservation of mass, momentum, and the asymptotic expansion of frequencies under the symplectic transformation in Lemma \ref{51} is straightforward; its proof, being even simpler, is omitted.

Now we give the set-up of iteration parameters. Let $v\geq 0$ be the $v^{th}$ KAM step.

$J_{v}=\lbrace i\in \bar{\mathbb{Z}}:\vert i\vert\leq v\rbrace.$
In other words, we excite two oscillators 
$$z_{j}=\sqrt{2(I_{j}+y_{j})}e^{\mathrm{i}x_{j}},\bar{z}_{j}=\sqrt{2(I_{j}+y_{j})}e^{-\mathrm{i}x_{j}}$$
$$z_{-j}=\sqrt{2(I_{-j}+y_{-j})}e^{\mathrm{i}x_{-j}},\bar{z}_{-j}=\sqrt{2(I_{-j}+y_{-j})}e^{-\mathrm{i}x_{-j}}$$
every iteration.

$\alpha_{v}^{i}=\frac{\alpha_{i}}{10}(9+2^{-v}),\alpha_{i}=\frac{1}{({i}^{i})^{exp}},1\leq i\leq v,\beta_{v}=\frac{1}{(v^{v})^{exp}},\tau_{v}=10v+10$, which are used to dominate the measure of removed parameters,

$m_{v}=\frac{m_{0}}{10}(9+2^{-v})$, which is used for describing the growth of external frequencies,

$E_{v}=Cv^{\frac{5}{2}}$, which is used to dominate the norm of internal frequencies,

$M_{1,v}=\frac{M_{1,0}}{9}(10-2^{-v}),M_{2,v}=\frac{M_{2,0}}{9}(10-2^{-v}),M_{v}=M_{1,v}+M_{2,v}$, which are used to dominate the Lipschitz semi-norm of frequencies.

$L_{v}=C$, which is used to dominate the inverse Lipschitz semi-norm of internal frequencies.

$s_{v}=\frac{s_{0}}{2^{v}}$, which dominates the width of the angle variable $x$,

$\sigma_{v}=\frac{s_{v}}{20}$, which serves as a bridge from $s_{v}$ to $s_{v+1}$,

$B_{v}=c_{v}\sigma_{v}^{-9(4v+\tau_{v}+1)}$, here $c_{v}=(\tau_{v}^{\tau_{v}})^{exp}$,

$\epsilon_{v}=\epsilon_{v-1}^{\frac{5}{4}}$, which dominates the size of the perturbation $P_{v}$ in the $v^{th}$ KAM iteration,

$\gamma_{v}=\epsilon_{v}^{\frac{1}{2}}$, which is used to dominate the norm of external frequencies,

$\iota_{v}=2^{v^{2}}$, which is used for the estimate of measure,

$\Lambda_{v}=10v+10,\rho_{v}=\rho_{0}(1-\sum_{i=0}^{v}\frac{1}{10i^{2}})$, which are the parameters in the asymptotic estimate condition,

$K_{v}=2^{v}$, which is the  length of the truncation.

$r_{v+1}=\epsilon_{v}^{C}r_{v}, D_{v}=D(s_{v},r_{v}),\lambda_{v}=\frac{\beta_{v}}{M_{v}}$.

And then we are at the position to prove the iterative lemma.

\begin{lem}
	Let $N_{v}=\sum_{j\in J_{v}}\omega_{v,j}(\sigma)y_{j}+\sum_{j\in\bar{\mathbb{Z}}\setminus J_{v}}\Omega_{v,j}(\sigma)z_{j}\bar{z}_{j}$ be a normal form Hamiltonian and $H_{v}=N_{v}+P_{v}$ be a small perturbation of it on $D(s_{v},r_{v})\times \Pi_{v}$, $\Omega_{j}(j\in\bar{\mathbb{Z}}\setminus J_{v})$ have the decomposition $\Omega_{v,j}=\sum_{i=0}^{v}\Omega_{v,j}^{i}$ and $\Omega_{v,j}^{i}$ depends only on $x\in \mathbb{T}^{J_{i}}_{s_{i}}$ and $\sigma\in\Pi_{v}$. Suppose that \\
	(i)$\lambda_{v}=((\omega_{v,j})_{j\in J_{v}},(\Omega_{v,j})_{j\in\bar{\mathbb{Z}}\setminus J_{v}})$ satisfies
	\begin{equation}\label{52}
		\lambda_{v,n}=\vert n\vert^{2}+\sigma_{n}+n\bar{\lambda}_{v}+\tilde{\lambda}_{v}+\hat{\lambda}_{v,n},n\in\bar{\mathbb{Z}}
	\end{equation}
	with estimates for $\forall n,l\in\bar{\mathbb{Z}}$
	$$\Vert \hat{\lambda}_{v,n}\Vert_{D_{v}\times\Pi}\leq\sum_{i=0}^{v}\epsilon_{i}\vert n\vert^{-1},\Vert\frac{\partial}{\partial \sigma_{l}} \hat{\lambda}_{v,n}\Vert_{D_{v}\times\Pi}\leq\sum_{i=0}^{v}\epsilon_{i}(\vert nl\vert^{-1}+e^{-2\vert n-l\vert\rho_{i}}+e^{-2\vert n+l\vert\rho_{i}}),$$
	$$\Vert \bar{\lambda}_{v}\Vert_{D_{v}\times\Pi,L},\Vert\tilde{\lambda}_{v}\Vert_{D_{v}\times\Pi,L}\leq\sum_{i=0}^{v}\epsilon_{i}.$$
	(ii)$$\vert \langle k,\omega_{v}\vert_{J_{i}}\rangle\vert\geq\alpha_{v}^{i}\frac{1}{\langle k\rangle^{\tau_{i}}},0\neq k\in\mathbb{Z}^{J_{i}}$$
	
	$$\vert\langle k,\omega_{v}\rangle+\langle l,\bar{\Omega}_{v}\rangle\vert\geq\beta_{v}\frac{\langle l\rangle_{2}}{\langle k\rangle^{\tau_{v}}},k\neq 0,0<\vert l\vert\leq 2,$$
	$$\vert\langle l,\bar{\Omega}_{v}\rangle\vert\geq m_{v}\langle l\rangle_{2},0<\vert l\vert\leq 2,$$
	$$\Vert\tilde{\Omega}_{v,j}^{i}\Vert_{s_{i},\tau_{i}+1}\leq \alpha_{v}^{i}\gamma_{i}\vert j\vert,$$
	$$\vert\omega_{v}\vert_{\Pi_{v}}\leq E_{v},\vert \omega_{v}\vert^{lip}_{\Pi_{v}}\leq M_{1,v},\vert\omega_{v}^{-1}\vert^{lip}_{\omega(\Pi_{v}^{J_{v}})\times\Pi'_{v}}\leq L_{v},\Vert\Omega_{v}\Vert^{lip}_{-1,\Pi_{v}}\leq M_{2,v},$$
	
	(iii)$P_{v}$ satisfies momentum conservation, mass conservation, $P_{v}\in FAE_{\Lambda_{v},\epsilon_{v},\rho_{v}}, \Vert X_{P_{v}}\Vert_{r_{v},a,p-1,D_{v}\times\Pi}^{\lambda_{v}}\leq\epsilon_{v}$ and $\Vert X_{\partial_{\sigma_{d}}P_{v}}\Vert_{r_{v},a,p-1,D_{v}\times\Pi}\leq\epsilon_{v}\frac{1}{\vert d\vert}$ for any $d\in\bar{\mathbb{Z}}$.\\
	Then there exists a symplectic coordinate transformation $\Phi_{v+1}:D(s_{v+1},r_{v+1})\times \Pi_{v}\rightarrow D(s_{v},r_{v})$ and a subset $$\Pi_{v+1}=\Pi_{v}\setminus\cup_{0\leq i\leq v,k\in\mathbb{Z}^{J_{i}},\vert k\vert>\iota_{v},l=0} \tilde{R}^{v+1}_{kl}(\alpha_{v+1}^{i},\tau_{i})\setminus\cup_{k\in\mathbb{Z}^{J_{v}}, 0\leq\vert l\vert\leq 2}\tilde{R}^{v+1}_{kl}(\beta_{v+1},\tau_{v+1}),$$
	such that for $H_{v+1}=H_{v}\circ\Phi_{v+1}=N_{v+1}+P_{v+1}=\sum_{j\in J_{v+1}} \omega_{v+1,j}(\sigma)y_{j}+\sum_{j\in\bar{\mathbb{Z}}\setminus J_{v+1}}\Omega_{v+1,j}(\sigma)z_{j}\bar{z}_{j}+P_{v+1}$, 
	
	(i)the same assumptions as above are satisfied with `$v+1$' in place of `$v$'.
	
	(ii)For $\forall \sigma\in\Pi_{v}$, define its truncation $\sigma_{k,\alpha,\tau}$ whose $jth$ element equals to that of $\sigma$ if $\vert j\vert <\frac{\langle k\rangle^{2\tau+2}}{\alpha^{2}}$ and $0$ otherwise. Then $R^{v+1}_{kl}(\alpha,\tau)\subset\tilde{R}^{v+1}_{kl}(\alpha,\tau)$, where
	$$R^{v+1}_{kl}(\alpha,\tau)=\lbrace \sigma\in \Pi_{v}:\vert\langle k,\omega_{v+1}(\sigma)\rangle+\langle l,\bar{\Omega}_{v+1}(\sigma)\rangle\vert<\alpha\frac{\langle l\rangle_{2}}{\langle k\rangle^{\tau}}\rbrace,$$
	$$\tilde{R}^{v+1}_{kl}(\alpha,\tau)=\lbrace \sigma\in \Pi_{v}:\vert\langle k,\omega_{v+1}(\sigma_{k,\alpha,\tau})\rangle+\langle l,\bar{\Omega}_{v+1}(\sigma_{k,\alpha,\tau})\rangle\vert<2\alpha\frac{\langle l\rangle_{2}}{\langle k\rangle^{\tau}}\rbrace.$$
\end{lem}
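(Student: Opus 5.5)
The step $v\to v+1$ splits into three successive transformations: a KAM step on the $\sharp J_v$-dimensional torus, then removal of the angle dependence of the two modes $\pm(v+1)$, then promotion of those two modes to action–angle variables.

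\emph{KAM step at fixed dimension.} Assumptions (i)–(iii) supply exactly the hypotheses of Section \ref{301}. These are the Diophantine conditions \eqref{10}–\eqref{12} with $\alpha_i=\alpha_v^i$, $\beta=\beta_v$, $\tau=\tau_v$, and the bound $\Vert X_{P_v}\Vert^{\lambda_v}\le\epsilon_v$.

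We solve the homological equations \eqref{3}–\eqref{414}. The truncation length is $K=K_v=2^v$ and $\sigma=\sigma_v=s_v/20$. For $i\neq\pm j$ we use Lemma \ref{118}; for $|j|\le K_v$ in the $(-j,j)$ block we use Lemma \ref{416}. In that block the exponential factor $e^{CK_vs_v\sum\gamma_i}$ is harmless, because $K_vs_v=s_0$ while $\sum\gamma_i\le C\epsilon_0^{1/2}$. The blocks with $|j|>K_v$ are moved into $R'$, and \eqref{419} bounds them by $\epsilon_v e^{-\rho_vK_v}$.

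We then take $\eta=\epsilon_v^{C}$ with $C$ small. Since $B_{\sigma_v}\lesssim(v^v)^{\exp}$ is only super-exponential in $v$, it is dominated by $\epsilon_v^{-1/100}$, because $\epsilon_v=\epsilon_0^{(5/4)^v}$ is doubly exponentially small. The final estimate of Section 4.2 therefore gives
\[
\Vert X_{P_+}\Vert^{\lambda}\le C\bigl(B\epsilon_v\eta^{-2}\beta_v^{-1}+Be^{-\rho K_v}\eta^{-2}+\eta\bigr)\epsilon_v\le\epsilon_v^{5/4}=\epsilon_{v+1}.
\]
The term $e^{-\rho K_v}$ is the one that requires care. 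The point is that $\rho_v\ge\rho_0/2$, and the proposal has to check that $e^{-\rho_02^v}\le\epsilon_v^{1/4+3C}$. This holds provided the constant $C$ in $\epsilon_{v+1}$ is chosen compatibly with $(5/4)^v\ll 2^v$.

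\emph{Frequencies and parameter bookkeeping.} The new frequencies are $\omega'=[R^y]$ and $\Omega'_j=R_{jj}+\langle\partial_x\Omega_j,F^y\rangle$. By \eqref{35}, \eqref{31} and \eqref{32} they are of size $\epsilon_v$ in the $\lambda$-norm, so $M_{v+1},E_{v+1},L_{v+1}$ follow from $M_v$ and the geometric slack. The expansion \eqref{52} at level $v+1$ comes from the persistence statement of Section 4.3. That statement gives $F\in SAE$ and $\{P,F\}\in FAE_{\Lambda_v+\delta,\epsilon_v^{1+C},\rho_v-\kappa}$; we take $\delta=10$ and $\kappa=\rho_0/(10(v+1)^2)$, and iterating through the Lie series yields $P_{v+1}\in FAE_{\Lambda_{v+1},\epsilon_{v+1},\rho_{v+1}}$. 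The diagonal $R_{jj}$ at $(m,n)=(0,0)$ equals $ta_1+b_1+c_1/t$ with $a_1=a_1^1+\dots$, and it is added to $n\bar\lambda_v+\tilde\lambda_v+\hat\lambda_{v,n}$ with $n=t$. Mass and momentum conservation are preserved because $F$ inherits them from $R$, and Poisson brackets preserve them.

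\emph{Mode promotion.} Next we apply Lemma \ref{51} twice, with $t=\pm(v+1)$. Its hypothesis (ii) follows from $\Vert\tilde\Omega^i_{v,j}\Vert\le\alpha_v^i\gamma_i|j|$ and from $\gamma sn\le s_v(v+1)C\epsilon_0^{1/2}\le1$. This removes the $x$-dependence of $\Omega_{\pm(v+1)}$ at the cost of shrinking $r$ by $(2^{-(v+1)^2})^{\exp}$, which is absorbed into $r_{v+1}=\epsilon_v^Cr_v$. We then introduce action–angle coordinates $z_{\pm(v+1)}=\sqrt{2(I+y)}e^{\mathrm{i}x}$ with $I_{\pm(v+1)}$ suitably small. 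This enlarges $J_v$ to $J_{v+1}$, and the former $\bar\Omega_{\pm(v+1)}$ become new tangential frequencies of size $\approx(v+1)^2\le E_{v+1}$. Inverse Lipschitz control $L_{v+1}$ holds because $\partial\omega_j/\partial\sigma_j\approx1$ plus an $\ell^2$-small correction, by the $\hat\lambda$ bounds. The new angle dependence of $\Omega_j$ on $x_{\pm(v+1)}$ becomes the component $\Omega^{v+1}_{v+1,j}$.

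\emph{Parameter excision (main obstacle).} Finally we define $\Pi_{v+1}$ by removing the resonant sets. For $|k|\le\iota_v$ at old levels $i\le v$, the Diophantine conditions persist from level $v$ because $|\omega_{v+1}-\omega_v|\le\epsilon_v\ll(\alpha_v^i-\alpha_{v+1}^i)\iota_v^{-\tau_i}$; hence only $|k|>\iota_v$ is removed there. The inclusion $R^{v+1}_{kl}\subset\tilde R^{v+1}_{kl}$ is the delicate point. I would write
\[
\langle k,\omega_{v+1}(\sigma)\rangle-\langle k,\omega_{v+1}(\sigma_{k,\alpha,\tau})\rangle
\]
as an integral of $\partial_{\sigma_j}$ over the indices $|j|\ge\langle k\rangle^{2\tau+2}/\alpha^2$. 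These derivatives are bounded via the $\Vert\cdot\Vert_L$ norms and the $\hat\lambda$ derivative bounds by $\epsilon_0|k|(|j|^{-1}|\sigma_j|)\le\epsilon_0|k||j|^{-2}$, and the sum over the tail is $\lesssim\epsilon_0|k|\alpha^2\langle k\rangle^{-2\tau-2}<\alpha\langle l\rangle_2\langle k\rangle^{-\tau}$. The same estimate applies to $\bar\Omega$. I expect the hardest part to be controlling the cross terms $e^{-2|n\pm l|\rho}$ in $\partial_{\sigma_l}\hat\lambda$, which do not decay in $l$ for $n$ near $l$. Since $n$ ranges only over $J_{v+1}$ together with two further normal modes, I would handle them using the explicit $|\sigma_l|\le|l|^{-1}$ and the restriction $|l|$ large relative to $|n|\le v+1$.
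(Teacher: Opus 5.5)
\textbf{Gap: the inclusion step never uses momentum and mass conservation.} Apart from the ordering, your plan matches the paper: Lemma \ref{51} for $t=\pm(v+1)$, promotion of two modes, one KAM step from Sections 3--4, persistence of the Diophantine conditions for $\langle k\rangle\le\iota_v$ via the shrinking $\alpha_v^i$, and a truncation argument for $R^{v+1}_{kl}\subset\tilde R^{v+1}_{kl}$. The problem is in the step you yourself call the main obstacle.

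Write $\lambda_n=n^2+\sigma_n+n\bar\lambda+\tilde\lambda+\hat\lambda_n$ and $N_k:=\langle k\rangle^{2\tau+2}\alpha^{-2}$ for the truncation threshold.
\begin{itemize}
\item The $\sigma_j$-derivative of $\langle k,\omega_{v+1}\rangle+\langle l,\bar\Omega_{v+1}\rangle$ contains $\bigl(\sum_n n(k_n+l_n)\bigr)\partial_{\sigma_j}\bar\lambda$.
\item The $\Vert\cdot\Vert_{L}$ bound gives only $|\partial_{\sigma_j}\bar\lambda|\lesssim\epsilon_0|j|^{-1}$. So the tail variation of this term is of order $\bigl|\sum_n n(k_n+l_n)\bigr|\,\epsilon_0N_k^{-1}$.
\item The sites in the support of $l$ are normal sites, so the premise $|n|\le v+1$ you invoke is false for them.
\item Take $l=e_{-i}-e_i$. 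Then $\langle l\rangle_2=1$, but the coefficient is $\sum_b bk_b-2i$. Your bound stops beating the threshold $\alpha\langle k\rangle^{-\tau}$ once $|i|\gtrsim\langle k\rangle^{\tau+2}/(\alpha\epsilon_0)$.
\end{itemize}
So your claim that ``the same estimate applies to $\bar\Omega$'' is not justified. By contrast, the cross terms $e^{-2|n\pm j|\rho}$ you single out are harmless. A tail index $j$ close to $n$ carries $|\sigma_j|\le|j|^{-1}\le N_k^{-1}$, so these terms contribute only $O(\epsilon_0\rho^{-1}N_k^{-1})$.

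\textbf{The missing idea.} This is the whole reason for the expansion \eqref{52}. $P_v$, and therefore $R$ and $F$, conserve momentum and mass. Hence the only small divisors that ever occur satisfy $\sum_n n(k_n+l_n)=0=\sum_n(k_n+l_n)$. For these, the $\bar\lambda$ and $\tilde\lambda$ contributions cancel identically. This is the paper's identity $f=g$, with $\omega'_n=n^2+\sigma_n+\hat\lambda_n$. Only $\sigma_n$ and $\hat\lambda_n$ then enter the tail estimate, and your derivative bounds suffice for them. In the $(-i,i)$ case, momentum also forces $|i|\le\tfrac12(v+1)|k|<N_k$, so $\sigma_{\pm i}$ is untouched by the truncation.

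The same omission breaks your justification of $L_{v+1}$. The $\sigma_{J_{v+1}}$-Jacobian of $(j\bar\lambda+\tilde\lambda)_{j\in J_{v+1}}$ contains the rank-one block $(j\,\partial_{\sigma_l}\bar\lambda)_{j,l}$. Its $\ell^2$ operator norm is only bounded by about $\epsilon_0v^{3/2}$, which is not small uniformly in $v$; the $\hat\lambda$ bounds control only the remainder. This is why the paper uses $\zeta=\omega'$, not $\omega$, as parameters in the measure estimate.

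\textbf{Smaller points.}
\begin{itemize}
\item With $K_v=2^v$ and $\rho_0$ fixed, your requirement $e^{-\rho_02^v}\le\epsilon_v^{1/4+3C}$ fails for small $v$ whatever $C$ is; at $v=0$ it reads $e^{-\rho_0}\le\epsilon_0^{1/4+3C}$. Take $K_v=K_02^v$ with $K_0\gtrsim\rho_0^{-1}\log\epsilon_0^{-1}$. This costs nothing, since $K_vs_v=K_0s_0$ and $K_0\sum_i\gamma_i\ll1$.
\item The term $\eta\,\epsilon_v$ forces $\eta\lesssim\epsilon_v^{1/4}$, and the first term forces $\eta^2\gtrsim B\beta^{-1}\epsilon_v^{3/4}$. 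So the exponent in $\eta=\epsilon_v^{C}$ must lie strictly between $\frac14$ and $\frac38$; it cannot be small.
\item You do the KAM step before Lemma \ref{51} and the promotion, whereas the paper does Lemma \ref{51} twice, promotes, and then makes the KAM step on $J_{v+1}$. Your order is workable, but promotion creates no $x_{\pm(v+1)}$-dependence in the normal form. In your order the component $\Omega^{v+1}_{v+1,j}$ is the drift $R_{jj}$ produced by your KAM step. That drift lives on $\mathbb{T}^{J_v}$ and is analytic only on a strip of width about $s_{v+1}$.
\end{itemize}
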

\begin{proof}
	We first apply Lemma \ref{51} twice, setting $t$ equals to $v+1$ and $-v-1$ respectively. Then we excite two oscillators
	$$z_{v+1}=\sqrt{2(I_{v+1}+y_{v+1})}e^{\mathrm{i}x_{v+1}},\bar{z}_{v+1}=\sqrt{2(I_{v+1}+y_{v+1})}e^{-\mathrm{i}x_{v+1}},$$
	$$z_{-v-1}=\sqrt{2(I_{-v-1}+y_{-v-1})}e^{\mathrm{i}x_{-v-1}},\bar{z}_{-v-1}=\sqrt{2(I_{-v-1}+y_{-v-1})}e^{-\mathrm{i}x_{-v-1}}$$ 
	and make one KAM step.
	
	The assumption (iii) can be directly derived from the proof in the previous section. Now we focus on the asymptotic estimate of the new frequency $\lambda_{v+1}$. In fact, we only need to consider the expression of $\Omega'_{j}:=\Omega_{v+1,j}-\Omega_{v,j}=R^{z\bar{z}}_{jj}+\langle \partial_{x}\Omega_{j},F^{y}\rangle$ when $\vert j\vert$ is sufficiently large due to the smallness of $\epsilon_{v}$. Let $m=n=1,t=j-1$, we obtain from the asymptotic estimate of $P_{v}$ that when $\vert j\vert$ is sufficiently large such that $\vert j\vert\geq \Lambda_{v}+1$, we have that
	$$R^{z\bar{z}}_{jj}=ta+b+\frac{c}{t},$$
	where $a,b$ can be expanded as Taylor series with respect to $\frac{1}{t}$:
	$$a=a_{1}+a_{2}\frac{1}{t}+a_{3}\frac{1}{t^{2}},$$
	$$b=b_{1}+b_{2}\frac{1}{t}.$$
	At the same time, the coefficients satisfy that 
	$$
	\begin{aligned}
	&\Vert a_{1}\Vert_{D(s_{v},r_{v})\times\Pi_{v},L},\Vert a_{2}\Vert_{D(s_{v},r_{v})\times\Pi_{v},L},\Vert b_{1}\Vert_{D(s_{v},r_{v})\times\Pi_{v},L},\\
	&\Vert c\Vert_{D(s_{v},r_{v})\times\Pi_{v}},\Vert a_{3}\Vert_{D(s_{v},r_{v})\times\Pi_{v}},\Vert b_{2}\Vert_{D(s_{v},r_{v})\times\Pi_{v}}\leq\epsilon_{v}
	\end{aligned}
	$$
	 and
	 $$
	 \begin{aligned}
	 	\sup_{D(s_{v},r_{v})\times\Pi_{v}}\vert\frac{\partial c}{\partial\sigma_{l}}\vert,\sup_{D(s_{v},r_{v})\times\Pi_{v}}\vert\frac{\partial a_{3}}{\partial\sigma_{l}}\vert,\sup_{D(s_{v},r_{v})\times\Pi_{v}}\vert\frac{\partial b_{2}}{\partial\sigma_{l}}\vert\leq\epsilon_{v}(\frac{1}{\vert l\vert}+\vert j\vert e^{-2\vert j-l\vert\rho_{v}}+\vert j\vert e^{-2\vert j+l\vert\rho_{v}})
	 \end{aligned}
	 $$  
	 for any $l\in\bar{\mathbb{Z}}$. And $a_{1},a_{2},b_{1}$ are indenpendent of $j$.
	
	Let $\bar{\lambda}_{v+1}=\bar{\lambda}_{v}+a_{1},\tilde{\lambda}_{v+1}=\tilde{\lambda}_{v}-a_{1}+a_{2}+b_{1},\hat{\lambda}_{v+1,j}=\hat{\lambda}_{v,j}+\frac{a_{3}}{j-1}+\frac{b_{2}}{j-1}+\frac{c}{j-1}$. Then (i) is obviously valid with `$v+1$' in place of `$v$'.

	As to the diophantine conditions, in view of the definition of $\Pi_{v+1}$ and the inclusion relation $R^{v+1}_{kl}\subset\tilde{R}^{v+1}_{kl}$, only the case $\langle k\rangle\leq\iota_{v}$ remains to verify. In this case, we have
	$$
	\begin{aligned}
		\vert\langle k,(\omega_{v+1}-\omega_{v})\vert_{J_{i}}\rangle\vert&\leq\vert k\vert\vert\omega_{v+1}-\omega_{v}\vert\\
		&\leq\vert k\vert B_{v}\epsilon_{v}\\
		&\leq(\alpha_{v}^{i}-\alpha_{v+1}^{i})\frac{\vert k\vert}{\iota_{v}^{\tau_{i}+1}}\\
		&\leq (\alpha_{v}^{i}-\alpha_{v+1}^{i})\frac{1}{\langle k\rangle^{\tau_{i}}}
	\end{aligned}
	$$
	on $D_{v+1}\times\Pi_{v}$ for $0\neq k\in\mathbb{Z}^{J_{i}}$ and $\langle k\rangle\leq \iota_{v}$. 
	
	A detailed justification and verification of the inclusion relation $R^{v+1}_{kl}\subset\tilde{R}^{v+1}_{kl}$ will be addressed in the following section.
	\end{proof}
\section{measure estimate}
Since the Diophantine condition is insensitive to variations in $\sigma_{j}$ with $\vert j\vert$ sufficienly large—owing to the asymptotic estimate of frequencies and the condition $\sigma\in l^{2}$—we can fix a specific value for $\sigma_{j}$ and treat all other values as perturbative in nature. The set $\tilde{R}^{v+1}_{kl}$ thus defined is a cylinder set in the infinite-dimensional product measure space we choose (see Appendix \ref{117} for detail). Crucially, the measure of $\tilde{R}^{v+1}_{kl}$ reduces to that of a finite-dimensional product measure, which can be computed directly.
\begin{lem}
	The inclusion relation $R^{v+1}_{kl}(\alpha,\tau)\subset\tilde{R}^{v+1}_{kl}(\alpha,\tau)$ holds.
\end{lem}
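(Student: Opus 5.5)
We have to show: if $\sigma\in R^{v+1}_{kl}(\alpha,\tau)$, i.e. $|\langle k,\omega_{v+1}(\sigma)\rangle+\langle l,\bar\Omega_{v+1}(\sigma)\rangle|<\alpha\langle l\rangle_2/\langle k\rangle^\tau$, then the same quantity evaluated at the truncated parameter $\sigma_{k,\alpha,\tau}$ is $<2\alpha\langle l\rangle_2/\langle k\rangle^\tau$. By the triangle inequality it suffices to prove
$$\Big|\langle k,\omega_{v+1}(\sigma)-\omega_{v+1}(\sigma_{k,\alpha,\tau})\rangle+\langle l,\bar\Omega_{v+1}(\sigma)-\bar\Omega_{v+1}(\sigma_{k,\alpha,\tau})\rangle\Big|\le \alpha\frac{\langle l\rangle_2}{\langle k\rangle^{\tau}}.$$
The difference $\sigma-\sigma_{k,\alpha,\tau}$ is supported on $\{|j|\ge N\}$ with $N:=\langle k\rangle^{2\tau+2}/\alpha^2$. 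I will estimate the left side by the mean value theorem along the segment $\sigma^{(\theta)}=\sigma_{k,\alpha,\tau}+\theta(\sigma-\sigma_{k,\alpha,\tau})$; this segment stays in the product $\prod[0,1/|i|]$, where the estimates of assumption (i) of the iterative lemma are uniform. The derivative in $\theta$ is $\sum_{|d|\ge N}(\sigma_d-0)\,\partial_{\sigma_d}(\cdot)$ with $|\sigma_d|\le 1/|d|$.

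The key step is to use the expansion \eqref{52}: $\lambda_{v+1,n}=n^2+\sigma_n+n\bar\lambda+\tilde\lambda+\hat\lambda_n$. Momentum and mass conservation (Definition \ref{112}) apply to the indices that appear. For the relevant combinations, the resonant linear combination $\langle k,\cdot\rangle+\langle l,\cdot\rangle$ of the indices satisfies $\sum_j jk_j+\sum j l_j=0$ and $\sum_j k_j+\sum l_j=0$; otherwise the corresponding term is absent or trivially nonresonant. On these combinations the contributions of $n\bar\lambda$ and of $\tilde\lambda$ cancel exactly, so the unbounded $\sigma$-dependence through $\bar\lambda,\tilde\lambda$ drops out. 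If conservation does not force these cancellations in some case, I would instead use $\|\bar\lambda\|_{L},\|\tilde\lambda\|_{L}\le 2\epsilon_0$, which gives $|\partial_{\sigma_d}\bar\lambda|\le 2\epsilon_0/|d|$, and bound those terms in the same way as below. What remains is:

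1. The explicit term $\sum k_n\sigma_n+\sum l_n\sigma_n$. Only the indices $n$ with $|n|\ge N$ contribute, and these occur only among the at most two normal indices of $l$ (the tangential sites satisfy $|n|\le v+1\ll N$). Conservation then makes the two such indices essentially determined, with $|n|\lesssim |k|(v+1)$. This is incompatible with $|n|\ge N$ unless the term vanishes, or unless the case is handled separately: if $|n|\ge N$ is large, the term is trivially nonresonant since $\langle l\rangle_2$ is large, and $R_{kl}$ is then empty.

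2. The hat terms. Using $|\partial_{\sigma_d}\hat\lambda_n|\le 2\epsilon_0(|nd|^{-1}+e^{-2|n-d|\rho}+e^{-2|n+d|\rho})$, summed against $|\sigma_d|\le |d|^{-1}$ over $|d|\ge N$, each index contributes at most $C\epsilon_0(N^{-1}+e^{-\rho N})$, because the relevant $n$ satisfy $|n|\ll N$. Multiplying by $|k|+|l|\le C\langle k\rangle$ gives a bound $C\epsilon_0\langle k\rangle/N=C\epsilon_0\alpha^2\langle k\rangle^{-2\tau-1}$, which is well below $\alpha\langle l\rangle_2\langle k\rangle^{-\tau}$ since $\alpha\le1$.

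The main obstacle is making the cancellation of $\bar\lambda,\tilde\lambda$ rigorous, together with the bookkeeping of the normal indices $l$ whose frequency $\bar\Omega_n$ has $|n|$ possibly large. The worry is that a normal index $n$ with $|n|$ comparable to $d$ makes the exponential term $e^{-2|n\mp d|\rho}$ of order one. I would treat that case by the smallness argument above: such $l$ forces $\langle l\rangle_2\gtrsim N$, so either the set is empty or the factor $\langle l\rangle_2$ on the right side absorbs the $O(\epsilon_0)$ error. Writing $\sigma^{(\theta)}$ as an $\ell^2$-path also needs a check that $\omega_{v+1}$ is defined there; this follows because $\Pi_v$ only removes cylinder sets depending on finitely many coordinates, or one extends $\omega_{v+1}$ in the Lipschitz sense via the $\|\cdot\|_{L}$ bounds.
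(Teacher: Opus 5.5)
Your proposal is correct and follows the paper's argument. Momentum and mass conservation remove the $n\bar\lambda+\tilde\lambda$ part of the expansion \eqref{52}, so only $\sigma_n+\hat\lambda_n$ varies, and the difference is then bounded by a mean-value sum over the tail coordinates $|d|\ge N=\langle k\rangle^{2\tau+2}/\alpha^{2}$. The only divergence is the final estimate: the paper applies Cauchy--Schwarz to get $C\langle k\rangle N^{-1/2}=C\alpha\langle k\rangle^{-\tau}$, which is what calibrates $N$, whereas you pair the pointwise decay of $\partial_{\sigma_d}\hat\lambda_n$ with $|\sigma_d|\le 1/|d|$ and get the sharper $C\epsilon_0\langle k\rangle/N$, leaving room to spare. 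The explicit $\sigma_n$ terms are at most $2/N$ outright, so your case analysis there is unnecessary; its claim that conservation pins down both normal indices fails for $l=e_i-e_j$, where only $i-j$ is fixed, though your emptiness fallback covers that case.
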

\begin{proof}
Let $f(\sigma)=\langle k,\omega_{v+1}(\sigma)\rangle+\langle l,\bar{\Omega}_{v+1}(\sigma)\rangle,g(\sigma)=\langle k,\omega'_{v+1}(\sigma)\rangle+\langle l,\bar{\Omega}'_{v+1}(\sigma)\rangle$, where $\omega'_{v+1,j}=\vert j\vert^{2}+\sigma_{j}+\hat{\omega}_{v+1,j},\bar{\Omega}'_{v+1,j}=\vert j\vert^{2}+\sigma_{j}+\bar{\hat{\Omega}}_{v+1,j}$. It follows from the momentum conservation, mass conservation and asymptotic expansion of $\lambda_{v+1}$ that
$$
\begin{aligned}
\vert f(\sigma)-f(\sigma_{k,\alpha,\tau})\vert&=\vert g(\sigma)-g(\sigma_{k,\alpha,\tau})\vert\\
&\leq \langle k\rangle \sup_{l}\sum_{\vert j\vert\geq\frac{\langle k\rangle^{2\tau+2}}{\alpha^{2}}}\vert\frac{\partial\omega'_{v+1,l}}{\partial\sigma_{j}}\vert\frac{1}{\vert j\vert}+\sup_{l}\sum_{\vert j\vert\geq\frac{\langle k\rangle^{2\tau+2}}{\alpha^{2}}}\vert\frac{\partial\bar{\Omega}'_{v+1,l}}{\partial\sigma_{j}}\vert\frac{1}{\vert j\vert}\\
&\leq C\langle k\rangle(\sum_{\vert j\vert\geq\frac{\langle k\rangle^{2\tau+2}}{\alpha^{2}}}\frac{1}{\vert j\vert^{2}})^{\frac{1}{2}}\\
&\leq C\frac{\alpha}{\langle k\rangle^{\tau}}.
\end{aligned}
$$
From this we have reached that $R^{v+1}_{kl}(\alpha,\tau)\subset\tilde{R}^{v+1}_{kl}(\alpha,\tau)$.
\end{proof}
We let $\Pi_{v}^{R,1}=\cup_{0\leq i\leq v,k\in\mathbb{Z}^{J_{i}},\vert k\vert>\iota_{v},l=0} \tilde{R}^{v}_{kl}(\alpha_{v}^{i},\tau_{i}),\Pi_{v}^{R,2}=\cup_{k\in\mathbb{Z}^{J_{v}}, 0\leq \vert l\vert\leq 2}\tilde{R}^{v}_{kl}(\beta_{v},\tau_{v})$. Then the following measure estimate hold.
\begin{lem}
	$\mathbb{P}(\Pi_{v}^{R,1})\leq\frac{1}{\iota_{v}}\sum_{i=0}^{v}(i^{i})^{exp}\alpha^{i}_{v},\mathbb{P}(\Pi_{v}^{R,2})\leq(v^{v})^{exp}\beta_{v}.$
\end{lem}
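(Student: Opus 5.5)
The plan is to bound each cylinder set $\tilde{R}^{v}_{kl}(\alpha,\tau)$ separately and then sum over $k$ and $l$. By construction, membership of $\sigma$ in $\tilde{R}^{v}_{kl}(\alpha,\tau)$ depends only on the finitely many coordinates $\sigma_j$ with $\vert j\vert<\langle k\rangle^{2\tau+2}/\alpha^{2}$. Therefore $\mathbb{P}(\tilde{R}^{v}_{kl})$ equals the finite product measure $\prod_{\vert j\vert<N_k}\mathbb{P}_j$ of the corresponding finite-dimensional set (Appendix \ref{117}). By Fubini, I would freeze all coordinates except one well-chosen direction and estimate the one-dimensional measure of the sublevel set of $f(\sigma)=\langle k,\omega_{v}(\sigma_{k,\alpha,\tau})\rangle+\langle l,\bar\Omega_{v}(\sigma_{k,\alpha,\tau})\rangle$.

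\textbf{Transversality.} I would use the asymptotic expansion \eqref{52}: $\lambda_{v,n}=n^2+\sigma_n+n\bar\lambda_v+\tilde\lambda_v+\hat\lambda_{v,n}$. Momentum and mass conservation ensure that in the combinations actually occurring (the resonant index vectors $(k,l)$ with $\sum jk_j+\sum j l_j=0$ and $\sum k_j+\sum l_j=0$), the $\bar\lambda_v$ and $\tilde\lambda_v$ contributions cancel. So $f=\sum_j(k_j+l_j)(j^2+\sigma_j+\hat\lambda_{v,j})$. Pick an index $j_0$ with $(k+l)_{j_0}\neq0$, and among those, one with $\vert j_0\vert$ minimal, say $\vert j_0\vert\lesssim \vert k\vert+v$. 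Then
\[
\partial_{\sigma_{j_0}}f=(k+l)_{j_0}+\sum_j (k+l)_j\partial_{\sigma_{j_0}}\hat\lambda_{v,j},
\]
and by the bound on $\partial_{\sigma_l}\hat\lambda_{v,n}$ in \eqref{52} the error is at most $C\langle k\rangle\sum_i\epsilon_i\ll 1$. Hence $\vert\partial_{\sigma_{j_0}}f\vert\geq\frac12$. If $(k+l)=0$ identically, the function $f$ is a constant integer combination of the $j^2$ terms plus small corrections. Nonzero such constants are at least $m_v\langle l\rangle_2$, so the set is empty.

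\textbf{One-dimensional measure and summation.} The one-dimensional Lebesgue measure in $\sigma_{j_0}\in[0,1/\vert j_0\vert]$ of $\{\vert f\vert<2\alpha\langle l\rangle_2/\langle k\rangle^\tau\}$ is at most $8\alpha\langle l\rangle_2\langle k\rangle^{-\tau}$. Rescaling by $\mathbb{P}_{j_0}=\vert j_0\vert\,mea$ gives $\mathbb{P}(\tilde R_{kl})\le C\vert j_0\vert\langle l\rangle_2\alpha\langle k\rangle^{-\tau}$. For $\Pi^{R,2}_v$, I would use that $l$ with $0<\vert l\vert\le2$ contributes nontrivially only when the $l$-sites are comparable to $\vert k\vert$; otherwise the quantity $\vert\langle l,\bar\Omega\rangle\vert\gtrsim\langle l\rangle_2$ dominates $\vert\langle k,\omega\rangle\vert\le E_v\vert k\vert$, and the set is empty. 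This leaves $O(\langle k\rangle^{C})$ choices of $l$, each with factors $\langle l\rangle_2,\vert j_0\vert\lesssim\langle k\rangle^2v^2$. Summing over $k\in\mathbb{Z}^{J_v}$ with $\tau_v=10v+10\ge\sharp J_v+C$ yields $\sum_k\langle k\rangle^{C-\tau_v}\le (v^v)^{exp}$, hence $\mathbb{P}(\Pi^{R,2}_v)\le (v^v)^{exp}\beta_v$. For $\Pi^{R,1}_v$, take $l=0$ and $k\in\mathbb{Z}^{J_i}$ with $\vert k\vert>\iota_v$. Here $\vert j_0\vert\le i$, and $\sum_{\vert k\vert>\iota_v}\langle k\rangle^{-\tau_i}\le\iota_v^{-1}(i^i)^{exp}$ because $\tau_i\ge\sharp J_i+10$. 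Summing over $0\le i\le v$ gives the claim.

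\textbf{Main obstacle.} I expect the transversality step to be the hardest. The error sum $\sum_j\vert(k+l)_j\vert\,\vert\partial_{\sigma_{j_0}}\hat\lambda_{v,j}\vert$ must be controlled uniformly, including the $e^{-2\vert j\mp j_0\vert\rho_i}$ tails, and the cancellation of $\bar\lambda_v,\tilde\lambda_v$ must be justified through the conservation laws. First I would check carefully that only conservation-compatible $(k,l)$ appear in the homological equations, so the sets for other $(k,l)$ need not be removed.
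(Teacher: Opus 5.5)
Your transversality step fails as stated. By (i) of the iterative lemma, the error term $\sum_j (k+l)_j\,\partial_{\sigma_{j_0}}\hat\lambda_{v,j}$ is only bounded by roughly $\epsilon_0\sum_j |k_j|\,(|jj_0|^{-1}+e^{-2|j-j_0|\rho}+e^{-2|j+j_0|\rho})$, i.e.\ by $C\epsilon_0\langle k\rangle$. You then call this $\ll 1$, but $\epsilon_0\langle k\rangle$ is not small uniformly in $k$. Large $k$ is exactly the regime the lemma concerns: $\Pi^{R,2}_v$ is a union over all $k\in\mathbb{Z}^{J_v}$, and $\Pi^{R,1}_v$ involves only $|k|>\iota_v=2^{v^2}$, where $\iota_v\epsilon_0\gg 1$. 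Your choice of $j_0$ (the smallest site with $(k+l)_{j_0}\neq 0$) gives no control of $|(k+l)_{j_0}|$ relative to $|k|$. If $k_{j_0}=\pm 1$ while other components of $k$ are $\gg\epsilon_0^{-1}$, nothing stops the error from cancelling the main term. So $|\partial_{\sigma_{j_0}}f|\geq\frac12$ is unjustified, and the one-dimensional measure bound fails precisely where it is needed.

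The paper avoids this by changing parameters on the cylinder base. It uses the reduced tangential frequencies $\zeta_{J_v}=\omega'_v(\sigma_{k,\beta_v,\tau_v})$ themselves, together with the remaining $\sigma_j$, as coordinates. Then $\langle k,\zeta_{J_v}\rangle$ is exactly linear: moving along a coordinate with $k_1\neq 0$ changes it by exactly $k_1(t-s)$. The only nonlinear part is $\bar\Omega'_{v,i}-\bar\Omega'_{v,j}$ (at most two sites), whose variation is $O(\epsilon_0)$ independently of $k$. Pulling back to $\sigma$ costs only the inverse-Lipschitz constant $L_v$, which is absorbed into $(v^v)^{exp}$.

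Your direct $\sigma$-route can also be repaired without $L_v$ by choosing $j_0\in J_v$ to maximize $|k_j|\,|j|$. Then $\sum_j |k_j|/|jj_0|\leq\frac{\pi^2}{3}|k_{j_0}|$. Since $|j_0|/|j|\leq 1+|j\mp j_0|$, the exponential tails contribute at most $C\rho^{-2}|k_{j_0}|$. Hence $|\partial_{\sigma_{j_0}}f|\geq |k_{j_0}|/2$ once $\epsilon_0\ll\rho^2$.

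There is also a smaller gap in your counting of the $l$'s. The case $l=e_{-j}-e_j$ has $\langle l\rangle_2=1$, so your domination argument gives no finiteness there. You need momentum conservation, $\sum_b b k_b=\pm 2j$, which determines $j$ from $k$, as the paper uses.
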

\begin{proof}
	We only give the proof of the most difficult case that $l$ has two non-zero components of opposite sign. In this case, rewriting $R^{v}_{kl}$ as
	$$R^{v}_{kij}=\lbrace \sigma\in\Pi_{v}:\vert\langle k,\omega_{v}(\sigma_{k,\beta_{v},\tau_{v}})\rangle+\bar{\Omega}_{v,i}(\sigma_{k,\beta_{v},\tau_{v}})-\bar{\Omega}_{v,j}(\sigma_{k,\beta_{v},\tau_{v}})\vert<\beta_{v}\frac{\vert\vert i\vert^{2}-\vert j\vert^{2}\vert}{\langle k\rangle^{\tau_{v}}}\rbrace,i,j\in\bar{\mathbb{Z}}\setminus J_{v},i\neq\pm j,$$
		$$R^{v}_{k(-j)j}=\lbrace \sigma\in\Pi_{v}:\vert\langle k,\omega_{v}(\sigma_{k,\beta_{v},\tau_{v}})\rangle+\bar{\Omega}_{v,-j}(\sigma_{k,\beta_{v},\tau_{v}})-\bar{\Omega}_{v,j}(\sigma_{k,\beta_{v},\tau_{v}})\vert<\beta_{v}\frac{1}{\langle k\rangle^{\tau_{v}}}\rbrace,-j,j\in\bar{\mathbb{Z}}\setminus J_{v},$$
		we only need to  estimate the measure of 
		$$\Theta^{v}:=\cup_{i\neq j}R^{v}_{kij}.$$
		By the momentum conservation, mass conservation and the asymptotic estimate \eqref{52}, we have
		$$R^{v}_{kij}=\lbrace \sigma\in\Pi_{v}:\vert\langle k,\omega'_{v}(\sigma_{k,\beta_{v},\tau_{v}})\rangle+\bar{\Omega}'_{v,i}(\sigma_{k,\beta_{v},\tau_{v}})-\bar{\Omega}'_{v,j}(\sigma_{k,\beta_{v},\tau_{v}})\vert<\beta_{v}\frac{\vert\vert i\vert^{2}-\vert j\vert^{2}\vert}{\langle k\rangle^{\tau_{v}}}\rbrace,i,j\in\bar{\mathbb{Z}}\setminus J_{v},i\neq\pm j,$$
		$$R^{v}_{k(-j)j}=\lbrace \sigma\in\Pi_{v}:\vert\langle k,\omega'_{v}(\sigma_{k,\beta_{v},\tau_{v}})\rangle+\bar{\Omega}'_{v,-j}(\sigma_{k,\beta_{v},\tau_{v}})-\bar{\Omega}'_{v,j}(\sigma_{k,\beta_{v},\tau_{v}})\vert<\beta_{v}\frac{1}{\langle k\rangle^{\tau_{v}}}\rbrace,-j,j\in\bar{\mathbb{Z}}\setminus J_{v},$$
		where 
		$$\omega'_{v,j}=\vert j\vert^{2}+\sigma_{j}+\hat{\omega}_{v,j},j\in J_{v},$$
		$$\bar{\Omega}'_{v,j}=\vert j\vert^{2}+\sigma_{j}+\bar{\hat{\Omega}}_{v,j},j\in\bar{\mathbb{Z}}\setminus J_{v}.$$
		
		Since these sets are all cylinder sets in the infinite dimensional product measure space, we only need to estimate their measures in the finite dimensional space. Let $\tilde{\Pi}_{v}:=\lbrace \sigma_{k,\beta_{v},\tau_{v}}:\sigma\in\Pi_{v}\rbrace,\tilde{R}^{v}_{kij}:=\lbrace\sigma_{k,\beta_{v},\tau_{v}}:\sigma\in R^{v}_{kij}\rbrace$. We introduce the perturbed frequencies 
		$$\zeta_{j}=\left\{ \begin{array}{ll}
		\omega'_{v,j}(\sigma_{k,\beta_{v},\tau_{v}}),j\in J_{v},\\
		\sigma_{j},\vert j\vert\leq\frac{\langle k\rangle^{2\tau_{v}+2}}{\beta_{v}^{2}},j\in\bar{\mathbb{Z}}\setminus J_{v}.
		\end{array}\right. $$
		as parameters over the domain $Z:=\omega'_{v}(\tilde{\Pi}_{v})\times\prod_{\vert j\vert\leq\frac{\langle k\rangle^{2\tau_{v}+2}}{\beta_{v}^{2}},j\in\bar{\mathbb{Z}}\setminus J_{v}}[0,\frac{1}{\vert j\vert}]$ and consider the resonance zones $\dot{R}^{v}_{kij}=\zeta(\tilde{R}^{v}_{kij})$ in $Z$. Regarding $\bar{\Omega}'_{v}$ as a function of $\zeta$, then from the iterative lemma above, we know
		$$\vert\zeta_{J_{v}}\vert\leq E_{v}\leq Cv^{\frac{5}{2}},\vert\sigma_{J_{v}}\vert^{lip}_{Z}\leq L_{v}\leq C,$$
		$$\vert\bar{\Omega}'_{v,i}-\bar{\Omega}'_{v,j}\vert_{Z}\geq m_{v}\vert i^{2}-j^{2}\vert\geq\frac{9}{10}m_{0}\vert i^{2}-j^{2}\vert.$$
		
		Now we consider a fixed $\dot{R}^{v}_{kij}$.
		
		Case 1:$\vert k\vert<\frac{9m_{v}}{10E_{v}}\vert i^{2}-j^{2}\vert$, we get $\vert \langle k,\zeta_{J_{v}}\rangle\vert<\frac{9m_{v}}{10}\vert i^{2}-j^{2}\vert$. In view of $\beta_{v}\leq\frac{m_{v}}{10}$, we know $\dot{R}^{v}_{kij}$ is empty.
		
		Case 2:$0<\vert i^{2}-j^{2}\vert\leq\frac{10E_{v}}{9m_{v}}\vert k\vert$. Without loss of generality, assume that $k_{1}>0$. Fix $w_{1}=(1,0,\cdots,0)$ and write $\zeta=aw_{1}+w_{2}$ with $w_{1}\perp w_{2}$. As a function of $a$, for $t>s$, 
		$$\langle k,\zeta_{J_{v}}\rangle\vert^{t}_{s}=k_{1}(t-s),$$
		$$(\vert\bar{\Omega}'_{v,i}-\bar{\Omega}'_{v,j}\vert)\vert^{t}_{s}\leq\epsilon_{0}(t-s).$$
		Thus
		$$(\langle k,\zeta_{J_{v}}\rangle+\bar{\Omega}'_{v,i}-\bar{\Omega}'_{v,j})\vert^{t}_{s}\geq\frac{1}{2}k_{1}(t-s).$$
		Therefore, we get
		$$mea(\dot{R}^{v}_{kij})\leq C(diam Z)^{2v-1}\beta_{v}\frac{\vert i^{2}-j^{2}\vert}{\langle k\rangle^{\tau_{v}}}\prod_{\vert j\vert\leq\frac{\langle k\rangle^{2\tau_{v}+2}}{\beta_{v}^{2}},j\in\bar{\mathbb{Z}}\setminus J_{v}}\frac{1}{\vert j\vert}\leq C(v^{v})^{exp}\frac{\beta_{v}}{\langle k\rangle^{\tau_{v}-1}}\prod_{\vert j\vert\leq\frac{\langle k\rangle^{2\tau_{v}+2}}{\beta_{v}^{2}},j\in\bar{\mathbb{Z}}\setminus J_{v}}\frac{1}{\vert j\vert}.$$
		Going back to the original parameter domain $\Pi_{v}$ by the inverse frequency map $\zeta^{-1}$, we get 
		$$\mathbb{P}(R^{v}_{kij})\leq C(v^{v})^{exp}\frac{\beta_{v}}{\langle k\rangle^{\tau_{v}-1}}.$$
		
		Case 3:$i=-j$. By an argument completely analogous to that of Case 2, we obtain
		$$\mathbb{P}(R^{v}_{kij})\leq C(v^{v})^{exp}\frac{\beta_{v}}{\langle k\rangle^{\tau_{v}}}.$$
		
		For any fixed $k\in\mathbb{Z}^{2v}\setminus\lbrace 0\rbrace$, we discuss the number of nonempty $R^{v}_{kij},i,j\in\bar{\mathbb{Z}}\setminus J_{v},i\neq j,\sum_{b\in J_{v}}k_{b}b+i-j=0$.
		
		In view of Case 1 and Case 2, such that $R^{v}_{kij}$ is nonempty, the $i,j$ with $i\neq\pm j$ must satisfy
		$$\vert i\vert+\vert j\vert\leq 2\vert i^{2}-j^{2}\vert\leq CE_{v}\vert k\vert.$$
		Hence the number is no more than $Cv^{exp}\vert k\vert^{2}$.
		
		In view of Case 3, if $\pm\frac{1}{2}\sum_{b\in J_{v}}k_{b}b\in\mathbb{Z}\setminus J_{v}$, then $j$ is uniquely determined by $\sum_{b\in J_{v}}k_{b}b+2j=0$; otherwise, there is no such $R^{v}_{k(-j)j}$ at all. Hence the number is at most 1. We finally reach the conclusion by summing up $k$.
\end{proof}
\section{proof of the main theorem}
In this section, we prove the main theorem \ref{53}. Considering the Hamiltonian system  \eqref{54}\eqref{55}, we introduce action-angle variables by setting
$$\left\{ \begin{array}{ll}
q_{\pm 1}=\sqrt{2(I_{\pm 1}+y_{\pm 1})}e^{ix_{\pm 1}},\\
\bar{q}_{\pm 1}=\sqrt{2(I_{\pm 1}+y_{\pm 1})}e^{-ix_{\pm 1}},\\
q_{j}=z_{j},j\neq\pm 1,\\
\bar{q}_{j}=\bar{z}_{j},j\neq\pm 1,
\end{array}\right. $$
where $I_{1},I_{-1}$ are fixed. Therefore, up to a constant term, the Hamiltonian can be written as
$$H=N+P=\sum_{j\in J_{0}}\omega_{j}y_{j}+\sum_{j\in\bar{\mathbb{Z}}\setminus J_{0}}\Omega_{j}z_{j}\bar{z}_{j}+G(q(x,y,z,\bar{z})),$$
where 
$$\omega_{j}=j^{2}+\sigma_{j},j\in J_{0},$$
$$\Omega_{j}=j^{2}+\sigma_{j},j\in\bar{\mathbb{Z}}\setminus J_{0}.$$

To apply the iterative lemma with $v=0$, set $N_{0}=N,P_{0}=P,\Pi=\prod_{j\in\bar{\mathbb{Z}}}[0,\frac{1}{\vert j\vert}]\setminus(\Pi_{0}^{R,1}\cup\Pi_{0}^{R,2})$. Then it is obvious that assumptions (i) and (ii) hold true, and $P_{0}$ satisfies momentum conservation and mass conservation. In the same way as that of Lemma 7.1 in \cite{LY11}, we have that the Hamiltonian vector field $X_{G}$ is an analytic map from some neighborhood of the origin in $l^{a,p}_{\mathbb{C}}$ into $l^{a,p-1}_{\mathbb{C}}$ with $\Vert X_{G}\Vert_{a,p-1}=O(\Vert q\Vert_{a,p}^{3})$. Then we know there exists $r>0$ such that the Hamiltonian vector field $X_{P}$ is real analytic from $D(1,r)$ to $P^{a,p-1}$ with $\Vert X_{P}\Vert_{r,a,p-1,D(1,r)}=O(r^{2})$. Moreover, since $X_{P}$ is independent of $\sigma$, we know $\Vert X_{P}\Vert^{lip}_{r,a,p-1,D(1,r)\times\Pi}=0$. Therefore, $\epsilon_{0}=O(r^{2})$ satisfies the smallness condition when $r$ is small enough. 

Finally, we verify the first type of asymptotic estimate condition of $P_{0}$. By direct calculation, it is easy to obtain that
$$\frac{\partial^{2}P_{0}}{\partial\bar{z}_{n+t}\partial z_{m+t}}=-\frac{\mathrm{i}}{2}\int_{\mathbb{T}}\bar{u}\partial_{x}ue^{\mathrm{i}(m-n)x}dx+\frac{1}{2}(m+t)\int_{\mathbb{T}}\bar{u}ue^{\mathrm{i}(m-n)x}dx,$$
$$\frac{\partial^{2}P_{0}}{\partial z_{n+t}\partial z_{m-t}}=\frac{1}{4}(n+m)\int_{\mathbb{T}}\bar{u}^{2}e^{\mathrm{i}(m+n)x}dx,$$
$$\frac{\partial^{2}P_{0}}{\partial\bar{z}_{n+t}\partial\bar{z}_{m-t}}=-\frac{\mathrm{i}}{2}\int_{\mathbb{T}} u\partial_{x}u e^{-\mathrm{i}(n+m)x}dx.$$
Therefore, we have that $P_{0}\in FAE_{\Lambda_{0},\epsilon_{0},\rho_{0}}$. With the iterative lemma now applicable, we proceed to prove the convergence of the infinite iteration and the existence of a full dimensional torus, thereby completing the proof of Theorem \ref{53}.

We introduce a sequence of coordinate transformations mapping action-angle variables to Cartesian coordinates:
$$\begin{aligned}
\tau_{v}:D_{J_{v}}(s_{v},r_{v})&\rightarrow l^{a,p}_{\mathbb{C}}\times l^{a,p}_{\mathbb{C}}\\
z_{j}&=\sqrt{2(I_{j}+y_{j})}e^{\mathrm{i}x_{j}},j\in J_{v},\\
\bar{z}_{j}&=\sqrt{2(I_{j}+y_{j})}e^{-\mathrm{i}x_{j}},j\in J_{v},\\
z_{j}&=z_{j},j\in\bar{\mathbb{Z}}\setminus J_{v},\\
\bar{z}_{j}&=\bar{z}_{j},j\in\bar{\mathbb{Z}}\setminus J_{v},
\end{aligned}$$
for any $v\in\mathbb{N}_{+}$. The symplectic transformations obtained in the KAM procedure of Chapter \ref{301} and in Lemma \ref{51}(for eliminating the variable-coefficient normal frequency $\Omega_{v}$) are denoted by $g_{v}$ and $h_{v}$, respectively. Let $f_{v}=\tau_{v}\circ g_{v}\circ\tau_{v}^{-1},\tilde{f}_{v}=\tau_{v}\circ h_{v}\circ h_{-v}\circ\tau_{v}^{-1}$ and $f^{v}=\tilde{f}_{1}\circ f_{1}\circ\cdots\circ\tilde{f}_{v}\circ f_{v}$.  Up to a coordinate transformation, this is precisely the composition of the first 
$v$ transformations in the iteration lemma. From the estimates in \eqref{302} and Lemma \ref{51}, we conclude that the sequence $\lbrace f^{v}\rbrace_{v\geq 1}$ converges uniformly to a homeomorphism $f$ on $D_{*}:=\lbrace \vert z_{j}\vert^{2}=2I_{j},j\in\bar{\mathbb{Z}}\rbrace$ and $H\circ f=N_{*}$, which is a normal form with limit frequency $\omega_{*}$.

Owing to the smallness of $X_{P_{v}}$, we obtain that $\Vert X_{H}\circ f^{v}-Df^{v}\cdot X_{N_{v}}\Vert_{l^{a,p}\times l^{a,p}}\leq 4^{-v}$, thus $\Vert X_{H}\circ f^{v}-Df^{v}\cdot X_{N_{*}}\Vert_{l^{a,p}\times l^{a,p}}\leq 2^{-v}$. We set
$$\begin{aligned}
Y_{I}:\mathbb{T}^{\bar{\mathbb{Z}}}&\rightarrow T_{I}\subset l^{a,p}\times l^{a,p}\\
\theta&\mapsto q_{j}=\sqrt{2I_{j}}e^{\mathrm{i} \theta_{i}},\bar{z}_{j}=\sqrt{2I_{j}}e^{-\mathrm{i}\theta_{j}},j\in\bar{\mathbb{Z}}
\end{aligned}.$$
With the product topology on $\mathbb{T}^{\bar{\mathbb{Z}}}$, the map $Y_{I}$ is naturally a homeomorphism. Let $q(t):=Y_{I}(\omega_{*}t)$ be the flow generated by the vector field $X_{N_{*}}$, consequently $f^{v}(q(t))$ is the $2^{-v}$ approximate flow corresponding to $X_{H}$, i.e., $\Vert \partial_{t}(f^{v}(q(t)))-X_{H}\circ f^{v}(q(t))\Vert_{l^{a,p}\times l^{a,p}}\leq 2^{-v}$. Now we prove that $\lbrace\partial_{t}(f^{v}(q(t)))\rbrace_{v\geq 1}$ is a Cauchy sequence.

From the definition of $f^{v}$, we obtain that
$$\begin{aligned}
\partial_{t}(f^{v+1}(q(t)))-\partial_{t}(f^{v}(q(t)))&=\partial_{t}(f^{v}\circ\tilde{f}_{v+1}\circ f_{v+1}(q(t)))-\partial_{t}(f^{v}(q(t)))\\
&=Df^{v}\cdot D\tilde{f}_{v+1}\cdot(D f_{v+1}-Id)\cdot\partial_{t}q+Df^{v}\cdot(D\tilde{f}_{v+1}-Id)\cdot\partial_{t}q\\
&=:I+II.
\end{aligned}$$
It follows directly from the estimates in \eqref{302} that $\Vert I\Vert_{l^{a,p}\times l^{a,p}}\leq 4^{-v}$. From Lemma \ref{51}, we immediately have that 
$$
Dh_{v}=\begin{pmatrix}
Id&0&0&0&0\\
\partial_{xx}F(x(0))z_{v}(0)\bar{z}_{v}(0)&Id&\partial_{x}F(x(0))\bar{z}_{v}(0)&\partial_{x}F(x(0))z_{v}(0)\\
2\mathrm{i}\partial_{x}F(x(0))e^{2\mathrm{i}F(x(0))}z_{v}(0)&0&e^{2\mathrm{i}F(x(0))}&0&0\\
-2\mathrm{i}\partial_{x}F(x(0))e^{-2\mathrm{i}F(x(0))}\bar{z}_{v}(0)&0&0&e^{-2\mathrm{i}F(x(0))}&0\\
0&0&0&0&Id
\end{pmatrix}.
$$
In conjunction with the rapid decay of the amplitude of $q_{j}$ as $j$ tends to infinity, this leads to $\Vert II\Vert_{l^{a,p}\times l^{a,p}}\leq 4^{-v}$. We have thus shown that $\lbrace\partial_{t}(f^{v}(q(t)))\rbrace_{v\geq 1}$ is a Cauchy sequence.

Letting $v\rightarrow\infty$, we obtain the solution $f(q(t))=f(Y(\omega_{*}t))=:\Phi(\omega_{*}t)$ of equation \eqref{2}. Since $f$ and 
$Y$ are both homeomorphisms, $\Phi$ is a homeomorphism. Consequently, by Lemma 10.2 in \cite{BGR}, this solution is almost periodic but not quasi-periodic.

\section{small denominator equation with large variable coefficient}\label{119}
In this section, we consider the first order partial differential equation
\begin{equation}\label{411}
(\mathrm{i}\partial_{\omega}+\lambda(1+a(x)))u=p(x),x\in\mathbb{T}^{n},
\end{equation}
for the unknown function $u$ defined on the torus $\mathbb{T}^{n}$, where $\omega=(\omega_{1},\cdots,\omega_{n})\in\mathbb{R}^{n}$ and $\lambda\in\mathbb{C}$. To keep the exposition simple, we tailor the parameters and properties below to the context of the KAM theorem for almost periodic solutions in this paper, rather than striving for optimality or generality.

Let $s_{0}>0$ be a constant and $\gamma_{0}>0$ be sufficiently small. Define sequences $\lbrace s_{l}\rbrace_{l=1}^{\infty}$ and $\lbrace\gamma_{l}\rbrace_{l=1}^{\infty}$ and `bridges' from $s_{m}$ to $s_{m+1}$ as follows.
$$s_{l}=\frac{s_{0}}{2^{l}},l=1,2,\cdots,$$
$$\gamma_{l}=\gamma_{0}^{(\frac{6}{5})^{l}},l=1,2,\cdots,$$
$$s_{m}^{k}=s_{m}-\frac{ks_{m}}{100},k=1,2,\cdots,5.$$
Let $J_{0}\subset J_{1}\subset\cdots\subset J_{m}=J\subset \bar{\mathbb{Z}}$ be a sequence of subset of $\bar{\mathbb{Z}}$ and $\sharp J=n$.

We first introduce the following elementary lemma.
\begin{lem}\label{58}
	 Consider the first order partial differential equation
	 \begin{equation}\label{57}
	 	\partial_{\omega}u=p(x),x\in\mathbb{T}^{n},
	 \end{equation}
	for the unknown function $u$ defined on the torus $\mathbb{T}^{n}$, where $\omega=(\omega_{1},\cdots,\omega_{n})\in\mathbb{R}^{n}$ and $\lambda\in\mathbb{C}$. Assume\\
	(i)There are constants $\alpha>0$ and $\tau>n$ such that
	$$\vert\langle k,\omega\rangle\vert\geq\frac{\alpha}{\langle k\rangle^{\tau}},k\in\mathbb{Z}^{n}\setminus\lbrace 0\rbrace.$$
	(ii)$p(x)$ is real analytic in $x\in D(s)$, $\bar{p}=0$ and $\Vert p\Vert_{s,\tau}<\infty$.
	Then \eqref{57} has a unique solution $u(x):D(s)\rightarrow\mathbb{C}$ which is real analytic and satisfies
	$$\Vert u(x)\Vert_{D(s)}\leq\frac{1}{\alpha}\Vert p\Vert_{s,\tau}.$$
\end{lem}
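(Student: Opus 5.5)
The plan is to solve \eqref{57} directly in Fourier space and read the estimate off the series. Write $p(x)=\sum_{k\in\mathbb{Z}^{n}}\hat{p}(k)e^{\mathrm{i}\langle k,x\rangle}$. Since $\bar{p}=0$, we have $\hat{p}(0)=0$. Applying $\partial_{\omega}$ to a Fourier series multiplies the $k$-th coefficient by $\mathrm{i}\langle k,\omega\rangle$. Hence any solution with zero mean must have coefficients
$$\hat{u}(k)=\frac{\hat{p}(k)}{\mathrm{i}\langle k,\omega\rangle},\qquad k\neq 0,$$
and $\hat{u}(0)=0$. Uniqueness is to be understood modulo constants, normalized by $\bar{u}=0$: the kernel of $\partial_{\omega}$ on analytic functions consists only of constants, because $\langle k,\omega\rangle\neq 0$ for $k\neq 0$ by (i).

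For convergence and the bound, use (i) termwise: $|\hat{u}(k)|\leq \alpha^{-1}\langle k\rangle^{\tau}|\hat{p}(k)|$. For $x\in D(s)$ we have $|e^{\mathrm{i}\langle k,x\rangle}|\leq e^{|k|s}$, with $|k|$ the $\ell^{1}$ norm, which matches the convention in $\Vert\cdot\Vert_{s,\tau}$. Therefore
$$\sup_{x\in D(s)}|u(x)|\leq\sum_{k\neq 0}|\hat{u}(k)|e^{|k|s}\leq\frac{1}{\alpha}\sum_{k\neq0}|\hat{p}(k)|\,|k|^{\tau}e^{|k|s}=\frac{1}{\alpha}\Vert p\Vert_{s,\tau}.$$
Here I use $\langle k\rangle=|k|$ for $k\neq 0$. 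The series converges absolutely and uniformly on $D(s)$, so $u$ is analytic there. Uniform convergence also lets us differentiate termwise in the interior, which verifies $\partial_{\omega}u=p$; on the closed strip the identity follows by continuity, or holds in the interior, which is what is used.

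For real analyticity: $p$ real on $\mathbb{T}^{n}$ means $\hat{p}(-k)=\overline{\hat{p}(k)}$. Since $\langle -k,\omega\rangle=-\langle k,\omega\rangle$ is real, it follows that $\hat{u}(-k)=\overline{\hat{u}(k)}$, so $u$ is real for real $x$.

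There is no real obstacle here. The only point needing care is the norm convention: the weight $|k|^{\tau}$ in $\Vert p\Vert_{s,\tau}$ must exactly absorb the small divisor $\langle k\rangle^{\tau}$. The hypothesis $\tau>n$ is not even needed for this bound, since all the decay is already built into the $\Vert\cdot\Vert_{s,\tau}$ norm.
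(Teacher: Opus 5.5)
Your Fourier-series argument is correct. It is the standard proof that the paper omits as \emph{trivial in KAM theory}: divide the coefficients by $\mathrm{i}\langle k,\omega\rangle$, bound them termwise using the Diophantine condition, and use $|e^{\mathrm{i}\langle k,x\rangle}|\le e^{|k|s}$ on $D(s)$ with $|k|$ the $\ell^{1}$ norm. You are also right that uniqueness holds only modulo constants (fixed by normalizing $\bar{u}=0$), and that $\tau>n$ plays no role in the estimate itself.
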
	
The proof is trivial in KAM theory and we omit it.

Our goal is to exploit the decomposition $a(x)=\sum_{l=0}^{m}a_{l}(x)$
and eliminate $a_{l}(x)$ successively through a sequence of transformations, thereby converting equation \eqref{411} into a constant‑coefficient equation that we are well equipped to handle.
\begin{lem}\label{408}
	Consider the equation \eqref{411}.
	Assume\\
	(i)There are constants $\alpha_{l}(l=0,1,\cdots,m)$ and $\tau_{l}>\sharp J_{l}$ such that
	\begin{equation}
		\vert\langle k,\omega\vert_{J_{l}}\rangle\vert\geq\frac{\alpha_{l}}{\langle k\rangle^{\tau_{l}}},0\neq k\in\mathbb{Z}^{J_{l}},l=0,1,\cdots,m,
	\end{equation}
	\begin{equation}\label{405}
		\sup_{j\in J_{l}}\vert\omega_{j}\vert\leq C\langle l\rangle^{2},l=0,1,\cdots,m.
	\end{equation}
	(ii)$a:D(s_{m})\rightarrow\mathbb{C}$ has the decomposition
	$$a(x)=\sum_{l=0}^{m}a_{l}(x),$$
	where $a_{l}:D(s_{l})\rightarrow\mathbb{C}$ is real analytic and is of zero average:$[a_{l}]=0$. Moreover, assume $a_{l}(x)$ depends only on $x\in\mathbb{T}_{s_{l}}^{J_{l}}$ and 
	\begin{equation}\label{407}
	\Vert a_{l}\Vert_{s_{l},\tau_{l}}\leq\alpha_{l}\gamma_{l}.
	\end{equation}
	Then there exists an invertible coordinate transformation
	$$
	\begin{aligned}
	T:D(s_{m}^{2})&\rightarrow D(s_{m}^{1})\\
	x&\mapsto\phi=x+\sum_{l=0}^{m}b_{l}(x)\omega	
	\end{aligned}
	$$
	such that\\
	(i)Whenever $u(x)$ is a solution of \eqref{411} on $D(s_{m}^{1})$, $v(\phi):=u(x)=u(T^{-1}\phi)$ satisfies
	$$(\mathrm{i}\partial_{\omega}+\lambda)v=p^{*}(\phi)$$
	on $D(s_{m}^{3})$ with $\Vert p^{*}\Vert_{D(s_{m}^{3})}\leq 2\Vert p\Vert_{D(s_{m})}$.\\
	(ii)$b_{l}$ is real analytic in $D(s_{m}^{2})$ and satisfies $\Vert b_{l}\Vert_{D(s_{m}^{2})}\leq \gamma_{l}^{1-},l=0,1,
	\cdots,m$.\\
	(iii)There exists a family of functions $\tilde{b}_{l}(l=0,1,\cdots,m)$ which is real analytic such that $T^{-1}:D(s_{m}^{3})\rightarrow D(s_{m}^{2})$ has the form
	$$x=T^{-1}\phi=\phi+\sum_{l=0}^{m}\tilde{b}_{l}(\phi)\omega$$
	with $\Vert\tilde{b}_{l}\Vert_{D(s_{m}^{3})}\leq \gamma_{l}^{1-}$.
\end{lem}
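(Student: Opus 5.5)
The coefficient $a$ is eliminated by a change of the angle variable along the direction $\omega$. If $\phi=x+b(x)\omega$ with $b=\sum_l b_l$ real analytic, the chain rule gives, for $v(\phi)=u(x)$,
$$\partial_\omega u(x)=(1+\partial_\omega b(x))\,(\partial_\omega v)(\phi).$$
We choose $b$ so that $1+\partial_\omega b=1+a$. Lemma \ref{58} applied to $\partial_\omega b_l=a_l$ gives such a choice, since each $a_l$ has zero average and depends only on $x\in\mathbb{T}^{J_l}$.

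With this choice, equation \eqref{411} becomes
$$(1+a(x))(\mathrm{i}\partial_\omega+\lambda)v=p(x).$$
Hence $v$ solves $(\mathrm{i}\partial_\omega+\lambda)v=p^*$ with
$$p^*(\phi)=\frac{p(T^{-1}\phi)}{1+a(T^{-1}\phi)}.$$
By \eqref{407}, $\Vert a\Vert\le\sum_l\alpha_l\gamma_l\le\frac12$, so $\Vert p^*\Vert_{D(s_m^3)}\le 2\Vert p\Vert_{D(s_m)}$. This gives conclusion (i), provided $T$ is invertible with the stated domains.

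For (ii), solve $\partial_{\omega|_{J_l}}b_l=a_l$ on $\mathbb{T}^{J_l}$ using Lemma \ref{58} with the Diophantine constant $\alpha_l,\tau_l$. This gives
$$\Vert b_l\Vert_{D(s_l)}\le\alpha_l^{-1}\Vert a_l\Vert_{s_l,\tau_l}\le\gamma_l.$$
Since $s_m^2<s_l$ for all $l\le m$, the bound holds on $D(s_m^2)$, and in fact $\gamma_l\le\gamma_l^{1-}$. Derivatives of $b_l$ will be needed for the inverse. By Cauchy estimates on the strip $s_l\to s_m^1$, whose width loss is at least of order $s_m$,
$$\Vert\partial_x b_l\Vert_{D(s_m^1)}\lesssim\gamma_l/s_m\lesssim\gamma_l 2^m.$$
Since $\gamma_l=\gamma_0^{(6/5)^l}$ decays super-exponentially, $\gamma_l 2^m$ is harmless when $l$ is comparable to $m$. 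For small $l$, $\gamma_l$ is only moderately small, so we must use that $b_l$ is analytic on the wider strip $D(s_l)$: then $\Vert\partial_xb_l\Vert\lesssim \gamma_l/(s_l-s_m^1)\lesssim\gamma_l 2^l$. This is small once $\gamma_0$ is small, and summable in $l$.

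For (iii), invert $T$ by writing $x=\phi+\tilde b(\phi)\omega$. The function $\tilde b$ solves the fixed-point equation
$$\tilde b(\phi)=-b(\phi+\tilde b(\phi)\omega),$$
which is solved by a Picard iteration (or the contraction mapping principle) in $\sup$-norm on $D(s_m^3)$.

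\textbf{The main obstacle} is that $|\omega|$ grows: by \eqref{405}, $|\omega|_{J_l}\lesssim\langle l\rangle^2$. Two conditions must hold, and both involve the factor $|\omega|$.
\begin{itemize}
\item[(a)] The displacement $\tilde b\,\omega$ must keep $\phi+\tilde b\omega$ inside $D(s_m^2)$. Because $b$ and $\tilde b$ are real on the real torus, only the imaginary part of the displacement matters. One therefore needs $|\omega|\cdot\Vert\mathrm{Im}\,\tilde b\Vert\le s_m/100$.
\item[(b)] The contraction constant must be below one, i.e. $\Vert\partial_x b\,\omega\Vert<1$.
\end{itemize}

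To handle this, decompose $\tilde b=\sum_l\tilde b_l$, where $\tilde b_l$ collects the contribution of $b_l$ composed with the iterate. The component $b_l$ only sees $\omega|_{J_l}$, whose size is $\lesssim\langle l\rangle^2$, so the $l$-th piece carries a factor $\gamma_l\langle l\rangle^2$. Condition (a) then reduces to
$$\sum_l\gamma_l\langle l\rangle^2 2^m\ll1 \quad\text{for } l\text{ near } m,$$
together with the wider-strip argument for small $l$. For the imaginary part, use the argument behind \eqref{133}: a real-analytic function $g$ with $\Vert g\Vert_{s_l,\tau_l}$ small satisfies $|\mathrm{Im}\,g(x)|\lesssim|\mathrm{Im}\,x|\cdot\Vert g\Vert_{s_l,\tau_l+1}$. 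This gives a relative, rather than absolute, loss of strip width. The super-exponential decay of $\gamma_l$ against the polynomial factor $\langle l\rangle^2$ and the exponential factor $2^m$ yields $\Vert\tilde b_l\Vert\le\gamma_l^{1-}$, where the "$1-$" absorbs these polynomial and exponential losses.

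Finally, check that the composition maps $D(s_m^3)$ into $D(s_m^2)$ and $D(s_m^2)$ into $D(s_m^1)$; this uses the same imaginary-part estimate with the $\tfrac{s_m}{100}$ margins between the bridge radii $s_m^k$.
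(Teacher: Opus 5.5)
Your route differs from the paper's. The paper removes the $a_j$ one at a time by composing maps $T_jx=x+b_j(x)\omega$, dividing by $1+a_j$ after each step and inverting each $T_j$ by a Picard iteration confined to the block $J_j$. You remove all of $a$ at once with $b=\sum_l b_l$, $\partial_\omega b_l=a_l$.

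Your algebra is correct and simpler. The identity $(1+a)(\mathrm{i}\partial_\omega+\lambda)v=p$ is immediate. You also never need to check that the transformed coefficients $\tilde a_l\circ T_j^{-1}$ keep zero average; they do, because $\det DT_j=1+a_j$, but the paper leaves this implicit.

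Your condition (b) is not a real issue either. Since $\frac{d}{dt}\,b(\phi+t\omega)=\partial_\omega b=a$, the map $\tilde b\mapsto -b(\phi+\tilde b\,\omega)$ has Lipschitz constant at most $\Vert a\Vert\le\sum_l\alpha_l\gamma_l$. No factor $|\omega|$ and no Cauchy estimate are needed.

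The gap is condition (a), which is the heart of the lemma. Your sentence ``$b_l$ only sees $\omega|_{J_l}$, so the $l$-th piece carries a factor $\gamma_l\langle l\rangle^2$'' confuses the argument of $b_l$ with the displacement it produces.
\begin{itemize}
\item The displacement $\tilde b_l(\phi)\,\omega$ moves every coordinate $i\in J_m$. The imaginary shift of $\phi_i$ is $\mathrm{Im}\,\tilde b_l(\phi)\,\omega_i$, with $|\omega_i|$ as large as $C\langle m\rangle^2$.
\item The same cross term sits inside your fixed-point map: $\tilde b_0$ enters the argument of $b_m$ multiplied by $\omega|_{J_m}$.
\item Since $\mathrm{Im}\,\tilde b_0$ has relative size up to about $\gamma_0$ and $|\mathrm{Im}\,\phi|\approx s_m$, condition (a) amounts to $\gamma_0\langle m\rangle^2\ll 1$.
\item The wider-strip Cauchy estimate for small $l$ only improves $\Vert\partial_x b_l\Vert$. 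It cannot remove the factor $\omega_i$, because $D(s_m^2)$ has width about $s_m$ in every direction.
\end{itemize}

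This is a real obstruction, not a bookkeeping loss. Take $a=a_0=\epsilon\cos x_1$ with $\omega_1=1$ and $\epsilon=\alpha_0\gamma_0e^{-s_0}$. Your construction (which is also the paper's) gives $b=\epsilon\sin x_1$. At a point with $\mathrm{Re}\,x_1=0$ and $\mathrm{Im}\,x_1=\mathrm{Im}\,x_i=s_m^2$, one has $\mathrm{Im}(Tx)_i\ge s_m^2(1+\epsilon\,\omega_i)$. This exceeds $s_m^1$ as soon as $\epsilon\,\omega_i>1/98$, which the hypotheses allow once $m^2\gtrsim\epsilon^{-1}$.

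So for fixed $\gamma_0$ and large $m$, the claims $T:D(s_m^2)\to D(s_m^1)$ and $T^{-1}:D(s_m^3)\to D(s_m^2)$ cannot be obtained this way. You would need an extra hypothesis such as $C\sum_l\gamma_l^{1-}\sup_{J_m}|\omega|\le\frac{1}{100}$, or an $m$-dependent loss of width, for instance inner strips shrunk by a factor $1+C\gamma_0\langle m\rangle^2$.

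The paper's proof does not close this gap either. It deduces $T_j:D(\tilde s_{j+1}+\frac{9}{10}(\tilde s_j-\tilde s_{j+1}))\to D(\tilde s_j)$ from $|\mathrm{Im}\,b_j(x)|<\gamma_j^{1/10}|\mathrm{Im}\,x|$. That step requires $\gamma_j^{1/10}|\omega|\,s_m$ to be below roughly $\frac{1}{10}(\tilde s_j-\tilde s_{j+1})=10^{-5}s_m\gamma_{j+1}^{1/20}$. This fails for small $j$ once $m$ is large, for the same reason.
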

\begin{proof} The basic idea for the proof comes from \cite{HXY}. In the present scenario for constructing almost periodic solutions, the size of the frequency $\sup_{j\in J_{l}}\vert\omega_{j}\vert$ naturally grows toward $\infty$ as the dimension of the torus increases ($J_{l}\rightarrow \bar{\mathbb{Z}})$. So some necessary modifications are needed.
	
   We observe that $a(x)$ admits an expansion $a(x)=\sum_{i=1}^{v-1}a_{i}(x)$, where $a_{i}(x)$ is analytic in $\lbrace x\in \mathbb{C}^{i}/2\pi\mathbb{Z}^{i}:\vert Imx\vert\leq s_{i}\rbrace$. This provides an opportunity to successively eliminate $a_{i}(x)$ through a sequence of transformations. Since 
$a_{i}(x)$ depends only on the first $i$ variables, the invertibility of these transformations is evident and we arrive at the same estimate \eqref{412}. Our plan is to eliminate each $a_{l}$ step by step through a sequence of transformations. We achieve this by a mathematical induction argument. Let $\tilde{s}_{l}=s_{m}^{1}-\frac{1}{100}(s_{m}^{1}-s_{m}^{2})\sum_{j=0}^{l}\gamma_{j}^{\frac{1}{20}}(l=0,1,\cdots,m)$ be the corresponding `bridges' from $s_{m}^{1}$ to $s_{m}^{2}$. Suppose we are at the $j$-th step. More specifically, by some abuse of notation, we consider the equation
	\begin{equation}\label{406}
		(\mathrm{i}\partial_{\omega}+\lambda(1+\sum_{l=j}^{m}a_{l}(x)))u=p(x)
	\end{equation}
	on $D(\tilde{s}_{j})$. Let $b_{j}(x)=\partial_{\omega}^{-1}a_{j}(x)$. By lemma \ref{58}, $b_{j}(x):D(s_{j})\rightarrow\mathbb{C}$ is real analytic and satisfies
	\begin{equation}\label{61}
	\Vert b_{j}(x)\Vert_{D(s_{j})}\leq\gamma_{j}.
	\end{equation}
	
	Taking any $x=Rex+\mathrm{i}Imx\in D(\frac{9}{10}s_{j})$, we claim that $\vert Imb_{j}(x)\vert<\gamma_{j}^{\frac{1}{10}}\vert Im x\vert$. From this we can define the map $T_{j}:D(\tilde{s}_{j+1}+\frac{9}{10}(\tilde{s}_{j}-\tilde{s}_{j+1}))\rightarrow D(\tilde{s}_{j})$ by
	$$\phi=T_{j}x=x+b_{j}(x)\omega.$$
	By \eqref{61}, we can prove easily that there exists a  function $\tilde{b}_{j}$ which is real analytic such that the map $T_{j}$ is invertible and $T_{j}^{-1}$ has the form
	$$x=T_{j}^{-1}\phi=\phi+\tilde{b}_{j}(\phi)\omega.$$
	Actually, the proof is as follows. Because $b_{j}$ depends only on $x\in\mathbb{T}^{J_{j}}_{s_{j}}$, we let $x_{J_{j}}=(T_{j}^{-1}\phi)_{J_{j}}=\phi_{J_{j}}+h(\phi_{J_{j}})$. Since $\phi=x+b_{j}\omega$, we have $\phi_{J_{j}}=\phi_{J_{j}}+h(\phi_{J_{j}})+b_{j}(\phi_{J_{j}}+h(\phi_{J_{j}}))\omega_{J_{j}}$, by which we get a Picard sequence
	$$\left\{ \begin{array}{ll}
	h_{0}(\phi_{J_{j}})=-b_{j}(\phi_{J_{j}})\omega_{J_{j}},\\
	h_{v}(\phi_{J_{j}})=-b_{j}(\phi_{J_{j}}+h_{v-1}(\phi_{J_{j}}))\omega_{J_{j}},v=1,2,\cdots.
	\end{array}\right. $$

	From \eqref{405}, \eqref{61} and the contraction mapping principle, there exists a $h(\phi_{J_{j}})=\lim_{v\rightarrow\infty}h_{v}(\phi_{J_{j}})$. Let $\tilde{b}_{j}(\phi)=-b_{j}(\phi_{J_{j}}+h(\phi_{J_{j}}))$ and we reach the conclusion.
	
	Define for $\phi\in D(\tilde{s}_{j+1})$ that $v(\phi)=u(T_{j}^{-1}\phi)=:u(x)$. By direct computation, we have
	\begin{equation}\label{63}
	\begin{aligned}
	\omega\cdot\partial_{x}u(x)=&\omega\cdot\partial_{x}v(x+b_{j}(x)\omega)\\
	=&\sum_{l\in J}\omega_{l}\partial_{x_{l}}v(x+b_{j}(x)\omega)\\
	=&\sum_{l\in J}\omega_{l}\partial_{\phi_{l}}v(\phi)+\sum_{l\in J}\sum_{i\in J}\omega_{l}\omega_{i}\partial_{\phi_{i}}v(\phi)\partial_{x_{l}}b_{j}(x)\\
	=&(\omega\cdot\partial_{\phi}v(\phi))(1+\omega\cdot\partial_{x}b_{j}(x))
	\end{aligned}
	\end{equation}
	and
	\begin{equation}\label{64}
	\lambda(1+\sum_{l=j}^{m}a_{l}(x))u(x)=\lambda(1+\sum_{l=j}^{m}a_{l}(T_{j}^{-1}\phi))v(\phi).
	\end{equation}
	
	From \eqref{63}\eqref{64}, the homological equation \eqref{406} can be rewritten as
	\begin{equation}\label{65}
	(\mathrm{i}\omega\cdot\partial_{\phi}v(\phi))(1+\omega\cdot\partial_{x}(b_{j}(T^{-1}_{j}\phi)))+\lambda(1+\sum_{l=j}^{m}a_{l}(T_{j}^{-1}\phi))v(\phi)=p(T_{j}^{-1}\phi).
	\end{equation}
	
	Let $\tilde{p}(x)=(1+a_{j}(x))^{-1}p(x),\tilde{a}_{l}(x)=(1+a_{j}(x))^{-1}a_{l}(x)$. Then \eqref{65} reads
	\begin{equation}\label{67}
	\mathrm{i}\omega\cdot\partial_{\phi}v(\phi)+\lambda(1+\sum_{l=j+1}^{m}\tilde{a}_{l}(T_{j}^{-1}\phi)) v(\phi)=\tilde{p}(T_{j}^{-1}\phi)=:p^{*}(\phi).
	\end{equation}
	From \eqref{407}, it follows that
	$$\Vert p^{*}\Vert_{D(\tilde{s}_{j+1})}\leq\Vert\tilde{p}\Vert_{D(\tilde{s}_{j})}\leq(1-C\gamma_{j})^{-1}\Vert p\Vert_{D(\tilde{s}_{j})}.$$
	Similarly, we obtain analogous estimates for $a_{l}^{*}(\phi):=\tilde{a}_{l}(T_{j}^{-1}\phi)$. Thus we complete the passage from the $j$-th step to the $(j+1)$-th step. Consequently, the lemma follows from the rapid decay of $\gamma_{j}$.
\end{proof}

We now turn to the proof of the claim.
\begin{proof}[Proof of the claim]
	Let $u=Rex,v=Imx$. We obtain the Taylor expansion on the real axis as follows:
	$$
	\begin{aligned}
	b_{j}(x)=&b_{j}(u+iv)\\
	=&\sum_{p=0}^{\infty}\sum_{k\in\mathbb{N}^{ J_{j}},\vert k\vert=p}\frac{\mathrm{i}^{p}}{k!}\partial^{k}b_{j}(u)v^{k}\\
	=&\sum_{p\in 2\mathbb{N}}\sum_{k\in\mathbb{N}^{ J_{j}},\vert k\vert=p}\frac{\mathrm{i}^{p}}{k!}\partial^{k}b_{j}(u)v^{k}+\sum_{p\in 2\mathbb{N}+1}\sum_{k\in\mathbb{N}^{ J_{j}},\vert k\vert=p}\frac{\mathrm{i}^{p}}{k!}\partial^{k}b_{j}(u)v^{k}.
	\end{aligned}
	$$
	Here $\vert k\vert=\sum_{l\in J_{j}}\vert k_{l}\vert,k!=\prod_{l\in J_{j}}k_{l}!,\partial^{k}f=(\prod_{l\in J_{j}}\partial^{k_{l}}_{x_{l}})f$ for any $k\in\mathbb{N}^{J_{j}}$. Therefore, from Cauchy's estimate we have
	$$
	\begin{aligned}
	\vert Imb_{j}(x)\vert\leq&\sum_{p\in 2\mathbb{N}+1}\sum_{\vert k\vert=p}\frac{1}{k!}\vert\partial^{k}b_{j}(u)\vert\vert v^{k}\vert\\
	\leq&\sum_{p\in 2\mathbb{N}+1}\sum_{\vert k\vert=p}(\frac{\vert v\vert}{s_{j}})^{p}\Vert b_{j}\Vert_{D(s_{j})}\\
	\leq&\gamma_{j}^{\frac{1}{10}}\vert Im x\vert.
	\end{aligned}
	$$
\end{proof}

We now conclude with an estimate of the solution to the original small denominator equation with large variable coefficient.

\begin{lem}\label{118}
	Consider the equation \eqref{411}. Assume\\
	(i)The assumptions in Lemma \ref{408} are satisfied.\\
	(ii)There is a constant $\beta$ such that the following Diophantine conditions hold:
	\begin{equation}\label{68}
		\vert\langle k,\omega\rangle+\lambda\vert\geq\frac{\beta}{\langle k\rangle^{\tau_{m}}},k\in\mathbb{Z}^{J_{m}}.
	\end{equation}
	(iii)$p(x)$ is analytic in $x\in D(s_{m})$.\\
	Then \eqref{411} has a unique solution $u=u(x):D(s_{m}^
	{5})\rightarrow\mathbb{C}$ which satisfies 
	$$\Vert u\Vert_{D(s_{m}^{5})}\leq(\tau_{m}^{\tau_{m}})^{exp}\frac{1}{\beta(s_{m}-s_{m+1})^{C\tau_{m}}}\Vert p\Vert_{D(s_{m})},$$
	
\end{lem}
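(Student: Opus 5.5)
The proof reduces \eqref{411} to a constant-coefficient equation by Lemma \ref{408}, solves that equation by Fourier series under \eqref{68}, and then pulls the solution back through $T^{-1}$.

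\textbf{Step 1: reduction.} By assumption (i) and Lemma \ref{408}, the map $T:D(s_m^2)\to D(s_m^1)$, $\phi=x+\sum_l b_l(x)\omega$, is invertible. A function $u$ solves \eqref{411} exactly when $v:=u\circ T^{-1}$ solves
\[
(\mathrm{i}\partial_\omega+\lambda)v=p^*(\phi)\quad\text{on } D(s_m^3),\qquad \Vert p^*\Vert_{D(s_m^3)}\le 2\Vert p\Vert_{D(s_m)} .
\]
The correspondence works in both directions. The computation \eqref{63}--\eqref{67} is reversible, since $1+a_j$ and $1+\omega\cdot\partial_x b_j = 1+a_j$ are invertible: $\Vert a_j\Vert\le\alpha_j\gamma_j$ is small. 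Hence existence and uniqueness for \eqref{411} follow from existence and uniqueness for the $v$-equation.

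\textbf{Step 2: constant-coefficient solve.} Expand $p^*(\phi)=\sum_{k\in\mathbb{Z}^{J_m}}\hat p^*(k)e^{\mathrm{i}k\cdot\phi}$. Then
\[
\hat v(k)=\frac{\hat p^*(k)}{\lambda-\langle k,\omega\rangle}.
\]
The sign convention is immaterial because \eqref{68} is stated for all $k\in\mathbb{Z}^{J_m}$, so we may replace $k$ by $-k$. The Fourier coefficients satisfy $|\hat p^*(k)|\le \Vert p^*\Vert_{D(s_m^3)}e^{-|k|s_m^3}$. Using \eqref{68}, on $D(s_m^4)$ we get
\[
\Vert v\Vert_{D(s_m^4)}\le \frac{\Vert p^*\Vert_{D(s_m^3)}}{\beta}\sum_{k}\langle k\rangle^{\tau_m}e^{-|k|(s_m^3-s_m^4)} .
\]
Since $n=\sharp J_m$ and $\tau_m>n$, the standard lattice sum gives
\[
\sum_{k}\langle k\rangle^{\tau_m}e^{-|k|\delta}\le\Big(\tfrac{C\tau_m}{\delta}\Big)^{\tau_m+n}\le (\tau_m^{\tau_m})^{exp}\,\delta^{-C\tau_m},
\qquad \delta=s_m^3-s_m^4=\tfrac{s_m}{100}=\tfrac{1}{50}(s_m-s_{m+1}).
\]
This yields
\[
\Vert v\Vert_{D(s_m^4)}\le(\tau_m^{\tau_m})^{exp}\,\frac{1}{\beta(s_m-s_{m+1})^{C\tau_m}}\,\Vert p\Vert_{D(s_m)} .
\]
Uniqueness of $v$ holds because \eqref{68} excludes zero divisors.

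\textbf{Step 3: pull back.} Set $u=v\circ T$. We need $T(D(s_m^5))\subset D(s_m^4)$. By Lemma \ref{408}(ii), $\phi=x+\sum_l b_l(x)\omega$, and the contribution to the imaginary part is $\sum_l \mathrm{Im}\,b_l(x)\,\omega$ with $\omega$ real. The claim proved after Lemma \ref{408} gives $|\mathrm{Im}\,b_j(x)|\le\gamma_j^{1/10}|\mathrm{Im}\,x|$. Combined with \eqref{405} and the superexponential decay of $\gamma_j$, this gives
\[
|\mathrm{Im}\,\phi|\le |\mathrm{Im}\,x|\Big(1+C\langle m\rangle^2\sum_j\gamma_j^{1/10}\Big),
\]
which should stay below $s_m^4$ when $x\in D(s_m^5)$, provided $\gamma_0$ is small relative to the margin $s_m^4-s_m^5=s_m/100$.

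\textbf{Main obstacle.} This last containment is where I expect difficulty. The factor $\langle m\rangle^2$ coming from $|\omega|$ must be beaten, which requires $\gamma_0^{(6/5)^j/10}\langle m\rangle^2$ to be summably small uniformly in $m$. I would check this using $\gamma_j=\gamma_0^{(6/5)^j}$, and if necessary restrict to $j$ with $J_j\ni$ the large frequencies. Once the containment holds, $\Vert u\Vert_{D(s_m^5)}\le\Vert v\Vert_{D(s_m^4)}$ and the estimate of Lemma \ref{118} follows.
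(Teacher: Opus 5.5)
Your proposal is the paper's proof. You reduce \eqref{411} to $(\mathrm{i}\partial_{\omega}+\lambda)v=p^{*}$ on $D(s_m^3)$ via Lemma \ref{408}, solve by Fourier series using \eqref{68} while losing the width $s_m^3-s_m^4$, and pull back through $T$; you also supply details the paper omits, namely the converse direction of the reduction, uniqueness, and the lattice sum.

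The paper does not argue the containment $T(D(s_m^5))\subset D(s_m^4)$ that you single out. It takes it implicitly from the mechanism of Lemma \ref{408}, i.e.\ $|\mathrm{Im}\,b_j(x)|\le\gamma_j^{1/10}|\mathrm{Im}\,x|$ with $b_j$ real on the real torus. For fixed $m$ the containment holds once $\gamma_0\ll\langle m\rangle^{-20}$. The uniformity in $m$ you worry about (the factor $|\omega|\lesssim\langle m\rangle^{2}$) is a real concern, and the paper's argument does not address it either.
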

\begin{proof}
	Let $T$ be the coordinate transformation from Lemma \ref{408}. Then it follows that $v(\phi)=u(T^{-1}\phi)$ satisfies
	\begin{equation}\label{409}
		(\mathrm{i}\partial_{\omega}+\lambda)v=p^{*}(\phi)
	\end{equation}
	on $D(s_{m}^{3})$. By Fourier expansion, the solution to equation \eqref{409} is
	$$v(\phi)=\sum_{k\in\mathbb{Z}^{n}}\frac{1}{\lambda-\langle k,\omega\rangle}\hat{p}^{*}(k)e^{\mathrm{i}k\phi}.$$
	By the Diophantine conditions \eqref{68}, we obtain that
	$$\Vert v\Vert_{D(s_{m}^{4})}\leq(\tau_{m}^{\tau_{m}})^{exp}\frac{1}{\beta (s_{m}-s_{m+1})^{C\tau_{m}}}\Vert p^{*}\Vert_{D(s_{m}^{3})}.$$
	Returning to the original equation \eqref{411}, we conclude that
	$$\Vert u\Vert_{D(s_{m}^{5})}\leq (\tau_{m}^{\tau_{m}})^{exp}\frac{1}{\beta (s_{m}-s_{m+1})^{C\tau_{m}}}\Vert p\Vert_{D(s_{m})}.$$
\end{proof}

\appendix

\section{infinite dimensional product spaces}\label{117}
In this section, we briefly introduce the existence, uniqueness and construction of the product measure of a countably infinite number of probability measures. 

Let $\lbrace(\Omega_{k},\mathcal{F}_{k},\mathbb{P}_{k})\rbrace_{k\in\bar{\mathbb{Z}}}$ be a countable number of probability spaces. We define the product space 
$$(\Omega,\mathcal{F},\mathbb{P}):=\bigotimes_{k\in\bar{\mathbb{Z}}}(\Omega_{k},\mathcal{F}_{k},\mathbb{P}_{k})$$
of these infinitely many probability spaces in the manner as described in the following (a)-(c).\\
(a)Define the infinite product phase space as follows.
$$\Omega=\Pi_{k\in\bar{\mathbb{Z}}}\Omega_{k}:=\lbrace\omega:=(\cdots,\omega_{-1},\omega_{1},\cdots):\omega_{k}\in\Omega_{k},k\in\bar{\mathbb{Z}}\rbrace.$$
(b)The definition of an infinite product measurable structure is as follows. For any non-empty finite subset $I\subset\bar{\mathbb{Z}}$, we can define the product probability space 
$$(\Omega^{(I)},\mathcal{F}^{(I)},\mathbb{P}^{(I)}):=\bigotimes_{k\in I}(\Omega_{k},\mathcal{F}_{k},\mathbb{P}_{k})$$
in the usual way. Furthermore, we can also define natural projection $\pi_{I}$:
$$\pi_{I}:\Omega\rightarrow\Omega^{(I)},\omega:=(\omega_{k},k\in\mathbb{Z})\mapsto\pi_{I}(\omega)=\omega^{(I)}:=(\omega_{k},k\in I).$$
For any finite non-empty index set $I\subset\bar{\mathbb{Z}}$ and any $A\in\mathcal{F}^{(I)}$, we call $\pi^{-1}_{I}(A)\subset\Omega$ a cylinder set with $A$ as the base and $I$ as the index set. Let
$$\mathcal{E}_{I}:=\lbrace\pi^{-1}_{I}(A):A\in\mathcal{F}^{(I)}\rbrace=\pi^{-1}_{I}(\mathcal{F}^{(I)}),$$
and furthermore
$$\mathcal{E}:=\cup_{I\subset\mathbb{Z},\sharp I<\infty}\mathcal{E}_{I}$$
which is the set consisting of all the cylinder sets in $\Omega$ and an algebra on $\Omega$. At this point, let 
$$\mathcal{F}=\bigotimes_{k\in\bar{\mathbb{Z}}}\mathcal{F}_{k}:=\sigma(\mathcal{E})$$
which is the $\sigma$-algebra generated by $\mathcal{E}$ and gives the product measurable structure of an enumerable infinite number of measurable structures.\\
(c)Finally, the product measure of a countable number of probability measures is defined as follows. For any $B\in\mathcal{E}$, there exist a finite non-empty index set $I\subset\bar{\mathbb{Z}}$ and a measurable set $A\in\mathcal{F}^{(I)}$ such that $B=\pi_{I}^{-1}(A)$. We define
$$\mathbb{P}(B):=\mathbb{P}^{(I)}(A).$$
The unique probability measure on $\mathcal{F}=\sigma(\mathcal{E})$ can be defined by the extension theorem of Carathéodory. The details of the proof can be found in standard textbooks on measure theory and we omit it.
\section{lemma from liu-yuan}
The following lemma is a generalization of the Liu–Yuan lemma \cite{LY10}, adapted to the almost periodic setting. The proof is completely similar to that of Theorem 1.4 in \cite{LY10} and we omit it. For simplicity, we continue to use the parameters introduced in section \ref{119}.
\begin{lem}\label{416}
	Consider the first order partial differential equation
	\begin{equation}\label{413}
		-\mathrm{i}\partial_{\omega}u+\lambda u+\mu(x)u=p(x),x\in\mathbb{T}^{n},
	\end{equation}
	for the unknown function  $u$ defined on the torus $\mathbb{T}^{n}$, where $\omega=(\omega_{1},\cdots,\omega_{n})\in\mathbb{R}^{n}$ and $\lambda\in\mathbb{C}$. Assume\\
	(1)There are constants $\alpha_{l}(l=0,1,\cdots,m),\tilde{\gamma}>0$ and $\tau_{l}>\sharp J_{l}$ such that
	$$
	\begin{aligned}
	\vert\langle k,\omega\vert_{J_{l}}\rangle\vert&\geq\frac{\alpha_{l}}{\vert k\vert^{\tau_{l}}},k\in\mathbb{Z}^{J_{l}}\setminus\lbrace 0\rbrace,l=0,1,\cdots,m\\
	\vert k\cdot\omega+\lambda\vert&\geq\frac{\beta\tilde{\gamma}}{1+\vert k\vert^{\tau_{m}}},k\in\mathbb{Z}^{J_{m}}.
	\end{aligned}
	$$
	(2)$\mu:D(s_{m})\rightarrow\mathbb{C}$ has the decomposition
	$$\mu(x)=\sum_{l=0}^{m}\mu_{l}(x),$$
	where $\mu_{l}:D(s_{l})\rightarrow\mathbb{C}$ is real analytic and is of zero average: $[\mu_{l}]=0$. Moreover, assume $\mu_{l}(x)$ depends only on $x\in\mathbb{T}^{J_{l}}_{s_{l}}$ and
	$$\vert\mu_{l}\vert_{s_{l},\tau_{l}+1}\leq \alpha_{l}\gamma_{l}\tilde{\gamma}.$$
	(3)$p(x)$ is analytic in $x\in D(s_{m})$.\\
	Then \eqref{413} has a unique solution $u(x)$ which is defined in a narrower domain $D(s_{m}-\sigma)$ with $0<\sigma<s_{m}$, and satisfies
	$$\Vert u\Vert_{D(s_{m}-\sigma)}\leq\frac{(\tau_{m}^{\tau_{m}})^{exp}}{\beta\tilde{\gamma}\sigma^{C\tau_{m}}}e^{C\tilde{\gamma}s\sum_{l=0}^{m}\gamma_{l}}\Vert p\Vert_{D(s_{m})}.$$

\end{lem}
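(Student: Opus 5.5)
The plan is to follow the scheme of Theorem 1.4 in \cite{LY10}, adapted to the hierarchical structure of $\mu$. There are three steps: remove the variable coefficient by a multiplicative gauge, solve the resulting constant-coefficient equation, and then control the truncation remainder.

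First, the gauge. For each $l$ we solve $\partial_{\omega}\psi_{l}=\mathrm{i}\mu_{l}$ on $\mathbb{T}^{J_{l}}_{s_{l}}$ using Lemma \ref{58}, which applies because $[\mu_{l}]=0$ and $\omega\vert_{J_{l}}$ is Diophantine with constant $\alpha_{l}$. This gives $\Vert\psi_{l}\Vert_{D(s_{l})}\leq\gamma_{l}\tilde{\gamma}$, and since we used the $\tau_{l}+1$ norm of $\mu_{l}$, we also get one derivative of room. We set $\psi=\sum_{l}\psi_{l}$ and $u=e^{\psi}w$. Then $-\mathrm{i}\partial_{\omega}u+\mu u=e^{\psi}(-\mathrm{i}\partial_{\omega}w)$, so the equation becomes $-\mathrm{i}\partial_{\omega}w+\lambda w=e^{-\psi}p$.

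The key point is the size of $e^{\pm\psi}$ on the complex strip. Since $\mu_{l}$ is real, $\psi_{l}$ is purely imaginary on the real torus, so $\vert e^{\pm\psi}\vert$ is controlled only by $\mathrm{Im}$-displacement. Exactly as in the estimate \eqref{133} of Lemma \ref{51}, we have $\vert\psi_{l}(x)-\psi_{l}(\mathrm{Re}\,x)\vert\leq\sum_{k}\vert\hat{\psi}_{l}(k)\vert\,\vert k\vert s e^{\vert k\vert s}\leq C\gamma_{l}\tilde{\gamma}s$, using the extra derivative in the $\tau_{l}+1$ norm. Summing over $l$ gives $\Vert e^{\pm\psi}\Vert_{D(s_{m})}\leq e^{C\tilde{\gamma}s\sum_{l}\gamma_{l}}$, which produces the exponential factor in the statement.

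Second, the constant-coefficient step. We solve $w$ by Fourier series, $\hat{w}(k)=\widehat{(e^{-\psi}p)}(k)/(\lambda-\langle k,\omega\rangle)$. Using the Diophantine bound with $\beta\tilde{\gamma}$ together with the standard estimate $\sum_{k}\langle k\rangle^{\tau_{m}}e^{-\vert k\vert\sigma}\leq(\tau_{m}^{\tau_{m}})^{exp}\sigma^{-C\tau_{m}}$, we obtain $\Vert w\Vert_{D(s_{m}-\sigma)}\leq\frac{(\tau_{m}^{\tau_{m}})^{exp}}{\beta\tilde{\gamma}\sigma^{C\tau_{m}}}e^{C\tilde{\gamma}s\sum\gamma_{l}}\Vert p\Vert_{D(s_{m})}$. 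Multiplying back by $e^{\psi}$ costs a second exponential factor, which is absorbed by enlarging $C$. Uniqueness follows because the homogeneous equation $-\mathrm{i}\partial_{\omega}w+\lambda w=0$ has only the trivial solution under the Diophantine condition, applied at $k$ where the divisor is nonzero.

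The main obstacle is the case where $\mu$ carries all Fourier modes and the conjugation must respect truncations as in \eqref{103}. If the statement is read literally, with $\mu$ untruncated, the gauge argument above is exact and no truncation is needed. I would present it that way.

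The remaining care is that $n=\sharp J_{m}$ grows, so each Cauchy and Fourier estimate must be dimension-uniform. To keep them so, I would rely on the $\Vert\cdot\Vert_{s,\tau}$ weighted $\ell^{1}$ norms and on the factor $(\tau_{m}^{\tau_{m}})^{exp}$, which absorbs the lattice sums since $\tau_{m}>\sharp J_{m}$.
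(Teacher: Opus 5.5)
Your proposal is correct and follows the Liu--Yuan gauge argument that the paper invokes by citing Theorem 1.4 of \cite{LY10}: conjugate by $e^{\psi}$, bound $\vert\mathrm{Re}\,\psi_{l}\vert\leq s\gamma_{l}\tilde{\gamma}$ through the imaginary-displacement estimate of \eqref{133}, which is where the extra derivative in $\Vert\mu_{l}\Vert_{s_{l},\tau_{l}+1}$ is needed, and then solve the constant-coefficient equation by Fourier series. One sign needs fixing: to cancel $\mu$ you need $\partial_{\omega}\psi_{l}=-\mathrm{i}\mu_{l}$, not $+\mathrm{i}\mu_{l}$ (with your sign the two $\mu$ terms add), and this change leaves all your estimates unaffected.
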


\section{a technical lemma}
\begin{lem}\label{56}
	Let $R=(R_{ij})_{i,j\in\bar{\mathbb{Z}}}$ be a bounded operator on $l^{2}$ which depends on $x\in\mathbb{T}^{n}$ such that all elements $(R_{ij})$ are analytic on $D(s)$. Suppose $F=(F_{ij})_{i,j\in\bar{\mathbb{Z}}}$ is another operator on $l^{2}$ depending on $x$ whose elements satisfy
	$$\sup_{x\in D(s')}\vert F_{ij}(x)\vert\leq\frac{1}{1+\vert \vert i\vert-\vert j\vert\vert}\sup_{x\in D(s-\sigma)}\vert R_{ij}(x)\vert.$$
	Then $F$ is a bounded operator on $l^{2}$ for every $x\in D(s')$, and
	$$\sup_{x\in D(s')}\Vert F(x)\Vert\leq\frac{4^{n+2}}{\sigma^{n}}\sup_{x\in D(s)}\Vert R(x)\Vert.$$
\end{lem}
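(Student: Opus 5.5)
The estimate will follow from Schur's test, treating $x\in D(s')$ as frozen and comparing $F(x)$ entrywise with $R$. The approach is to dominate the entries of $F$ by a matrix whose $\ell^{2}$ operator norm is controlled by that of $R$. The paper gives no information beyond the sup-norm bound, so the argument has to go through the structure of $R$ on the slightly larger strip $D(s)$ together with the weight $(1+\vert\vert i\vert-\vert j\vert\vert)^{-1}$. The plan has three steps.

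\textbf{Step 1: bound each entry of $R$ by the operator norm.} For each pair $(i,j)$ and each $x\in D(s-\sigma)$,
$$\vert R_{ij}(x)\vert=\vert\langle R(x)e_{j},e_{i}\rangle\vert\leq\Vert R(x)\Vert.$$
Applying the hypothesis then gives, uniformly on $D(s')$,
$$\vert F_{ij}\vert\leq\frac{1}{1+\vert\vert i\vert-\vert j\vert\vert}\sup_{D(s)}\Vert R\Vert.$$

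\textbf{Step 2: replace the entrywise bound by a summable one.} A bound of size $\sup\Vert R\Vert$ in every entry is not enough, because the harmonic weight $(1+\vert\vert i\vert-\vert j\vert\vert)^{-1}$ is not summable. For this I would use the Fourier structure in $x$. I expand $R_{ij}(x)=\sum_{k\in\mathbb{Z}^{n}}\hat R_{ij}(k)e^{\mathrm{i}k\cdot x}$. By the Cauchy estimate on $D(s)$, each Fourier matrix $\hat R(k)$ is a bounded operator with
$$\Vert\hat R(k)\Vert\leq e^{-\vert k\vert s}\sup_{D(s)}\Vert R\Vert.$$
Then, for every $x\in D(s-\sigma)$,
$$F(x)=\sum_{k}\big(W\circ\hat R(k)\big)e^{\mathrm{i}k\cdot x},$$
where $W\circ$ denotes the Schur (entrywise) multiplier by the weight matrix $W_{ij}=(1+\vert\vert i\vert-\vert j\vert\vert)^{-1}$.

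\textbf{Step 3: bound the Schur multiplier and sum over $k$.} Splitting $\bar{\mathbb{Z}}$ into its positive and negative halves, the weight becomes Toeplitz-like in $\vert i\vert-\vert j\vert$. I would show that multiplication by $W$ is bounded as an operator on matrices. The natural first attempt is to write $W$ as an integral superposition of rank-one products of unimodular phases, for instance
$$\frac{1}{1+\vert t\vert}=\int_{0}^{\infty}e^{-u}\cos(ut)\,(\cdots)\,du,$$
because Schur multipliers of the form $e^{\mathrm{i}u(\vert i\vert-\vert j\vert)}$ are isometries. This gives a bound by an absolute constant, times $4$ to account for the sign splitting. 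Summing over $k$ then gives
$$\sum_{k}e^{-\vert k\vert\sigma}\leq\Big(\frac{4}{\sigma}\Big)^{n},$$
which produces the factor $4^{n+2}/\sigma^{n}$.

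\textbf{Main obstacle.} The hard step is the boundedness of the Schur multiplier $W$. The function $1/(1+\vert t\vert)$ is positive definite but not the Fourier transform of a finite measure, because its transform has a logarithmic singularity. If the integral representation fails, I would fall back on the hypothesis as literally stated and trade the logarithm against the loss $\sigma$. That is, I would use analyticity to get decay in $\vert\vert i\vert-\vert j\vert\vert$, as is implicit in the $FAE$ structure with $e^{-\vert n-m\vert\rho}$, and then conclude by Schur's test. Schur's test gives
$$\sup_{i}\sum_{j}\vert F_{ij}\vert\;\lesssim\;\sup_{D(s)}\Vert R\Vert,$$
using the exponential off-diagonal decay in place of the harmonic weight.
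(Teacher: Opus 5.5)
The gap is in your Step~2. The lemma's hypothesis is only an entrywise inequality between suprema, $\sup_{D(s')}\vert F_{ij}\vert\le W_{ij}\sup_{D(s-\sigma)}\vert R_{ij}\vert$. It does not say that $F$ is the Schur product of $W$ with $R$, mode by mode or otherwise, so the identity $F(x)=\sum_{k}(W\circ\hat R(k))e^{\mathrm{i}k\cdot x}$ is an extra assumption.

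That assumption also fails where the paper uses the lemma. There $F_{ij}$ solves a variable-coefficient homological equation, and even with constant coefficients the divisor depends on $k$. A Schur-multiplier bound needs the exact product structure, phases included; domination of moduli alone does not let you invoke it.

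The obstacle you single out is not actually an obstacle. $1/(1+\vert t\vert)$ is even, convex and decreasing to $0$ on $[0,\infty)$, so by P\'olya's criterion it is the Fourier transform of a probability density; a logarithmic singularity is integrable. Hence $W$ is a contractive Schur multiplier, via $D_{\xi}=\mathrm{diag}(e^{\mathrm{i}\vert j\vert\xi})$. This is moot, however, given the problem above.

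Your fallback is not available either. Off-diagonal decay in $\vert\vert i\vert-\vert j\vert\vert$ is not among the hypotheses, and analyticity in $x$ gives none (take $R$ independent of $x$).

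The missing idea is that the weight only needs to be square-summable, because rows and columns of a bounded operator lie in $\ell^{2}$. Cauchy--Schwarz then closes Schur's test directly, with no multiplier theory. Your Fourier expansion, your Cauchy estimate $\Vert\hat R(k)\Vert\le e^{-\vert k\vert s}\sup_{D(s)}\Vert R\Vert$, and the sum over $k$ are the right ingredients. They are needed because the suprema in the hypothesis are taken entrywise, possibly at different points $x$.

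Put $\rho_{ij}=\sup_{D(s-\sigma)}\vert R_{ij}\vert\le\sum_{k}\vert\hat R_{ij}(k)\vert e^{\vert k\vert(s-\sigma)}$. By Minkowski's inequality, and since rows of $\hat R(k)$ have $\ell^{2}$-norm at most $\Vert\hat R(k)\Vert$,
\[
\Big(\sum_{j}\rho_{ij}^{2}\Big)^{1/2}\le\sum_{k\in\mathbb{Z}^{n}}e^{\vert k\vert(s-\sigma)}\Vert\hat R(k)\Vert\le\sum_{k\in\mathbb{Z}^{n}}e^{-\vert k\vert\sigma}\sup_{D(s)}\Vert R\Vert\le\Big(\frac{4}{\sigma}\Big)^{n}\sup_{D(s)}\Vert R\Vert
\]
for $\sigma\le 1$, and the same holds for columns. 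Since $\sum_{j\in\bar{\mathbb{Z}}}(1+\vert\vert i\vert-\vert j\vert\vert)^{-2}\le 2\sum_{t\in\mathbb{Z}}(1+\vert t\vert)^{-2}<5$, Cauchy--Schwarz gives
\[
\sup_{i}\sum_{j}\sup_{D(s')}\vert F_{ij}\vert\le\sqrt{5}\Big(\frac{4}{\sigma}\Big)^{n}\sup_{D(s)}\Vert R\Vert ,
\]
and likewise with $i$ and $j$ exchanged. Schur's test then yields $\Vert F(x)\Vert\le 4^{n+2}\sigma^{-n}\sup_{D(s)}\Vert R\Vert$ for every $x\in D(s')$. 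This is essentially the standard argument the paper defers to through Kappeler--P\"oschel.
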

\begin{proof}
	The proof is completely similar to that of lemma M.3 in \cite{KP2} and we omit it.
\end{proof}

\section*{Acknowledgments}
The work was supported by National Natural Science Foundation of China (Grant No. 12371189).

\newcommand{\etalchar}[1]{$^{#1}$}


\begin{thebibliography}{BBHM18}
	\bibitem[Agr19]{Agr}
	G. P. Agrawal.
	\newblock {\em Nonlinear Fiber Optics}.
	\newblock Elsevier, London, 2019.
	
	
	\bibitem[BB13]{BB13}
	M. Berti and P. Bolle.
	\newblock Quasi-periodic solutions with {S}obolev regularity of {NLS} on
	{$\mathbb{T}^d$} with a multiplicative potential.
	\newblock {\em J. Eur. Math. Soc. (JEMS)}, 15(1):229--286, 2013.
	
	\bibitem[BB20]{BB20}
	M. Berti and P. Bolle.
	\newblock {Quasi-periodic solutions of nonlinear wave equations on the
		{$d$}-dimensional torus}.
	\newblock EMS Monographs in Mathematics. EMS Publishing House, Berlin, [2020]
	\copyright 2020.
	
	\bibitem[BBHM18]{BBHM18}
	P. Baldi, M. Berti, E. Haus, and R. Montalto.
	\newblock Time quasi-periodic gravity water waves in finite depth.
	\newblock {\em Invent. Math.}, 214(2):739--911, 2018.
	
	\bibitem[BBM16]{BBM}
	P. Baldi, M. Berti, and R. Montalto.
	\newblock KAM for autonomous quasi-linear perturbations of KdV.
	\newblock {\em Ann. Inst. H. Poincar\'{e} C Anal. Non Lin\'{e}aire}, 33:1589-1638, 2016.
	
	\bibitem[BBP13]{BBP13}
	M. Berti, L. Biasco, and M. Procesi.
	\newblock K{AM} theory for the {H}amiltonian derivative wave equation.
	\newblock {\em Ann. Sci. \'{E}c. Norm. Sup\'{e}r. (4)}, 46(2):301--373,
	2013.
	
	\bibitem[BCN80]{BCN80}
	H. Brézis, J. M. Coron and L. Nirenberg
	\newblock Free vibrations for a nonlinear wave equation and a theorem of P. Rabinowitz.
	\newblock {\em Commun. Pure. Appl. Math.}, 33(5):667-684, 1980.
	
	\bibitem[BGR24]{BGR}
	J. Bernier, B. Gr\'{e}bert, and T. Robert.
	\newblock Infinite dimensional invariant tori for nonlinear Schr\"{o}dinger equations.
	\newblock {\em preprint}, https://arxiv.org/abs/2412.11845,
	2024.
	
	\bibitem[BHM23]{BHM}
	M. Berti, Z. Hassainia, and N. Masmoudi.
	\newblock Time quasi-periodic vortex patches of Euler equation in the plane.
	\newblock {\em Invent. Math.}, 233:1279-1391,
	2023.
	
	\bibitem[BMP23]{BMP}
	L. Biasco, J.E. Massetti, and M. Procesi.
	\newblock Weak Sobolev almost periodic solutions for the 1D NLS.
	\newblock {\em Duke Math. J.}, 172(14):2643-2714, 2023.
	
	
	\bibitem[Bou96]{Bou96-GAFA}
	J.~Bourgain.
	\newblock Construction of approximative and almost periodic solutions of
	perturbed linear {S}chr\"{o}dinger and wave equations.
	\newblock {\em Geom. Funct. Anal.}, 6(2):201--230, 1996.
	
	\bibitem[Bou98]{B1}
	J.~Bourgain.
	\newblock Quasi-periodic solutions of {H}amiltonian perturbations of 2{D}
	linear {S}chr\"{o}dinger equations.
	\newblock {\em Ann. of Math.}, 148(2):363--439, 1998.
	
	\bibitem[Bou05a]{B2}
	J.~Bourgain.
	\newblock {\em Green's function estimates for lattice {S}chr\"{o}dinger
		operators and applications}, volume 158 of {\em Annals of Mathematics
		Studies}.
	\newblock Princeton University Press, Princeton, NJ, 2005.
	
	\bibitem[Bou05b]{B3}
	J.~Bourgain.
	\newblock On invariant tori of full dimension for 1D periodic NLS.
	\newblock {\em J. Funct. Anal.}, 229:62-94, 2005.
	
	\bibitem[CLSY18]{CLSY18}
	H. Cong, J. Liu, Y. Shi and X. Yuan.
	\newblock The stability of full dimensional KAM tori for nonlinear Schrödinger equation.
	\newblock {\em J. Differ. Equations}, 264(7):4504-4563, 2018.
	
	\bibitem[CMP21]{CMP}
	L. Corsi, R. Montalto, and M. Procesi.
	\newblock Almost-periodic response solutions for a forced quasi-linear Airy equation.
	\newblock {\em J. Dyn. Differ. Equations}, 33:1231-1267, 2021.
	
    
	
	\bibitem[Con23]{con23}
	H. Cong.
	\newblock The existence of full dimension KAM tori for nonlinear {S}chr\"{o}dinger equation.
	\newblock {\em Math. Ann.}, 390:671-719, 2024.
	
	\bibitem[CY21]{CY21}
	H. Cong and X. Yuan.
	\newblock The existence of full dimensional invariant tori for 1-dimensional nonlinear wave equation.
	\newblock {\em Ann. Inst. H. Poincar\'{e} C Anal. Non Lin\'{e}aire}, 38(3):759-786, 2021.
	
	

	\bibitem[EGK16]{EGK}
	L.~H. Eliasson, B. Gr\'{e}bert, and S.~B. Kuksin.
	\newblock K{AM} for the nonlinear beam equation.
	\newblock {\em Geom. Funct. Anal.}, 26(6):1588--1715, 2016.
	
	\bibitem[EK10]{EK}
	L.~H. Eliasson and S.~B. Kuksin.
	\newblock K{AM} for the nonlinear {S}chr\"{o}dinger equation.
	\newblock {\em Ann. of Math.}, 172(1):371--435, 2010.
	
	\bibitem[FMM24]{FMM24}
	L. Franzoi, N. Masmoudi and R. Montalto.
	\newblock Space Quasi-Periodic Steady Euler Flows Close to the Inviscid Couette Flow.
	\newblock {\em Arch. Rat. Mech. Anal.}, 248(5):81, 2024.
	
	\bibitem[GX13]{GX}
	J. Geng and X. Xu.
	\newblock Almost-periodic solutions of one dimensional {S}chr\"{o}dinger equation with the external parameters.
	\newblock {\em J. Dyn. Differ. Equations}, 25:435-450, 2013.
	
	\bibitem[GH17]{GH}
	J. Geng and W. Hong.
	\newblock Invariant tori of full dimension for second KdV equations with the external parameters.
	\newblock {\em J. Dyn. Differ. Equations}, 29:1325-1354, 2017.
	
	\bibitem[Hor76]{Hor76}
	L. H\"{o}rmander.
	\newblock The boundary problems of physical geodesy.
	\newblock {\em Arch. Rat. Mech. Anal.}, 62(1):1-52, 1976.
	
	\bibitem[HXY24]{HXY}
	S. Hu, H. Xue, and X. Yuan.
	\newblock Long time stability for KAM tori of the derivative nonlinear {S}chr\"{o}dinger equation.
	\newblock {\em preprint}, https://arxiv.org/abs/2401.11639, 2024.
	
	\bibitem[Kla80]{Kla80}
	S. Klainerman.
	\newblock Global existence for nonlinear wave equations.
	\newblock {\em Commun. Pure. Appl. Math.}, 33(1):43-101, 1980.
	
	
	\bibitem[KP96]{KP}
	S. B. Kuksin and J. P\"{o}schel.
	\newblock Invariant {C}antor manifolds of quasi-periodic oscillations for a
	nonlinear {S}chr\"{o}dinger equation.
	\newblock {\em Ann. of Math.}, 143(1):149--179, 1996.
	
	\bibitem[KP03]{KP2}
	T. Kappeler and J. P\"{o}schel.
	\newblock {\em KdV\&KAM.}
	\newblock Springer-Verlag, Berlin, Heidelberg, 2003.
	
	\bibitem[Kuk93]{K}
	S.~B. Kuksin.
	\newblock {\em Nearly integrable infinite-dimensional {H}amiltonian systems},
	volume 1556 of {\em Lecture Notes in Mathematics}.
	\newblock Springer-Verlag, Berlin, 1993.
	
	\bibitem[Kuk97]{K2}
	S.~B. Kuksin.
	\newblock On small-denominators equations with large variable coefficients.
	\newblock {\em J. Appl. Math. Phys.}, 48:262-271, 1997.
	
	\bibitem[Kuk00]{Kuk00}
	S.~B. Kuksin.
	\newblock {\em Analysis of {H}amiltonian {PDE}s}, volume~19 of {\em Oxford
		Lecture Series in Mathematics and its Applications}.
	\newblock Oxford University Press, Oxford, 2000.
	
	\bibitem[Kuk04]{Kuk04}
	S.~B. Kuksin.
	\newblock {\em Fifteen years of KAM for PDE}, volume~212 of {\em American Mathematical Society Translation}.
	\newblock American Mathematical Society, Providence, 2004.
	
	\bibitem[LY10]{LY10}
	J. Liu and X. Yuan.
	\newblock Spectrum for quantum Duffing oscillator and small-divisor equation with large variable coefficient.
	\newblock {\em Commun. Pure. Appl. Math.}, 63(9):1145-1172, 2010.
	
	\bibitem[LY11]{LY11}
	J. Liu and X. Yuan.
	\newblock A {KAM} theorem for {H}amiltonian partial differential equations with
	unbounded perturbations.
	\newblock {\em Comm. Math. Phys.}, 307(3):629--673, 2011.
	
	\bibitem[LY14]{LY14}
	J. Liu and X. Yuan.
	\newblock KAM for the derivative nonlinear Schr\"{o}dinger equation with periodic boundary conditions.
	\newblock {\em J. Differential Equations}, 256:1627-1652, 2014.
	
	\bibitem[MOT76]{MOT76}
	K. Mio, T. Ogino, and S. Takeda.
	\newblock Modified Nonlinear Schrödinger Equation for Alfvén Waves Propagating along the Magnetic Field in Cold Plasmas.
	\newblock {\em J. Phys. Soc. Jpn.}, 41(1):265-271, 1976.
	
	\bibitem[MV11]{MV11}
	C. Mouhot and C. Villani.
	\newblock On Landau damping.
	\newblock {\em Acta Math.}, 207(1):29-201, 2011.
	
	\bibitem[Pos90]{Pos90}
	J. P\"{o}schel.
	\newblock Small divisors with spatial structure in infinite-dimensional Hamiltonian systems.
	\newblock {\em Comm. Math. Phys.}, 127(2):351-393, 1990.
	
	
	\bibitem[Pos96a]{P1}
	J. P\"{o}schel.
	\newblock A {KAM}-theorem for some nonlinear partial differential equations.
	\newblock {\em Ann. Scuola Norm. Sup. Pisa Cl. Sci. (4)}, 23(1):119--148, 1996.
	
	\bibitem[Pos96b]{P2}
	J. P\"{o}schel.
	\newblock Quasi-periodic solutions for a nonlinear wave equation.
	\newblock {\em Comment. Math. Helv.}, 71(2):269--296, 1996.
	
	\bibitem[Pos02]{P3}
	J. P\"{o}schel.
	\newblock On the construction of almost periodic solutions for a nonlinear {S}chr\"{o}dinger equation.
	\newblock {\em Ergodic Theory Dyn. Syst.}, 22(5):1537-1549, 2002.
	
	\bibitem[Rab67]{Rab67}
	P. H. Rabinowitz.
	\newblock Periodic solutions of nonlinear hyperbolic partial differential equations.
	\newblock {\em Commun. Pure. Appl. Math.}, 20(1):145–205, 1967.
	
	\bibitem[Str08]{Str08}
	M. Struwe.
	\newblock {\em Variational Methods.}
	\newblock Springer-Verlag, Berlin, Heidelberg, 2008.
	
	
	\bibitem[Wan16]{Wang16}
	W. Wang.
	\newblock Energy supercritical nonlinear {S}chr\"{o}dinger equations:
	quasiperiodic solutions.
	\newblock {\em Duke Math. J.}, 165(6):1129--1192, 2016.
	
	\bibitem[Wan19]{Wang19}
	W. Wang.
	\newblock Quasi-periodic solutions to nonlinear {PDE}s.
	\newblock In {\em Harmonic analysis and wave equations}, volume~23 of {\em Ser.
		Contemp. Appl. Math. CAM}, pages 127--175. Higher Ed. Press, Beijing, 2019.
	
	\bibitem[Wan20]{Wang20}
	W. Wang.
	\newblock Space quasi-periodic standing waves for nonlinear {S}chr\"{o}dinger
	equations.
	\newblock {\em Comm. Math. Phys.}, 378(2):783--806, 2020.
	
	\bibitem[Way90]{W}
	C.~E. Wayne.
	\newblock Periodic and quasi-periodic solutions of nonlinear wave equations via
	{KAM} theory.
	\newblock {\em Comm. Math. Phys.}, 127(3):479--528, 1990.
	
	\bibitem[XYY26]{XYY}
	H. Xue, Z. You, and X. Yuan.
	\newblock Construction of periodic solutions of multi-dimensional
	nonlinear wave equations with unbounded perturbation.
	\newblock {\em J. Differ. Equations}, 453:113881, 2026.
	
	
	
	\bibitem[Yua21]{Yua21}
	X. Yuan.
	\newblock KAM Theorem with Normal Frequencies of Finite Limit‐Points for Some Shallow Water Equations.
	\newblock {\em Commun. Pure. Appl. Math.}, 74(6):1193-1281, 2021.
	
	\bibitem[YY25]{YY25}
	Z. You and X. Yuan.
	\newblock Periodic Response Solutions to Multidimensional Nonlinear Schrödinger Equations with Unbounded Perturbation.
	\newblock {\em Regular \& Chaotic Dynamics}, https://doi.org/10.1134/S1560354725530048, 2025.
	
	\bibitem[YY26]{YY26}
	Z. You and X. Yuan.
	\newblock Quasi-periodic Dynamics for Multi-dimensional Quasi-linear Schrödinger Equations via Resonant Mode Control.
	\newblock {\em preprint.}, https://arxiv.org/abs/2601.13611, 2026.
	
	\bibitem[Zeh76]{Zeh76}
	E. Zehnder.
	\newblock Moser's implicit function theorem in the framework of analytic smoothing.
	\newblock {\em Math. Ann.}, 219(2):105-121, 1976.
\end{thebibliography}
\end{document}